\newcommand{\dbtilde}[1]{\accentset{\approx}{#1}}
\numberwithin{equation}{section}
\newtheorem{thm}{Theorem}[section]
\newtheorem{prop}[thm]{Proposition}
\newtheorem{cor}[thm]{Corollary}
\newtheorem{lem}[thm]{Lemma}
\newtheorem{rem}[thm]{Remark}
\newtheorem{assum}[thm]{Assumption}
\title{On the domino shuffle and matrix refactorizations}
\author{Sunil Chhita}
\address{Sunil Chhita, Department of Mathematical Sciences, 
Durham University, Durham, UK}
\email{sunil.chhita@durham.ac.uk}
\author{Maurice Duits}
\address{Maurice Duits, Department of Mathematics, 
KTH Royal Institute of Technology, Stockholm, Sweden.}
\email{duits@kth.se}
\subjclass[2020]{60K35,  82B23, 82B20}
\keywords{domino tilings, Aztec diamond, domino shuffle, Wiener-Hopf factorization, matrix refactorization}
\date{\today}
\begin{document}

\begin{abstract}
	This paper is motivated by computing correlations for domino tilings of the Aztec diamond. It is inspired by two of the three distinct methods that have recently been used  in the simplest case of a doubly periodic weighting, that is the two-periodic Aztec diamond. One of the methods, powered by the domino shuffle, involves inverting the Kasteleyn matrix giving correlations through the local statistics formula. Another of the methods, driven by a Wiener-Hopf factorization for two-by-two matrix valued functions, involves the Eynard-Mehta Theorem. For arbitrary weights the Wiener-Hopf factorization can be replaced by an LU- and UL-decomposition, based on a  matrix refactorization, for the product of the transition matrices. This paper shows that, for arbitrary weightings of the Aztec diamond, the evolution of the face weights under the domino shuffle and the matrix refactorization is  the same. In particular, these dynamics can be used to find the inverse of the LGV matrix in the Eynard-Mehta Theorem. 

\end{abstract}

\maketitle
\tableofcontents

\section{Introduction} 
\label{sec:Introduction}
\subsection{Domino tilings of the Aztec diamond}
Random tiling models of bounded regions have been studied heavily in the past few decades; see \cite{Gor20} and references therein. 
One of the central examples of this area are domino tilings of the Aztec diamond, where an Aztec diamond of size $n$ is all the squares of the square grid whose centers satisfy the condition that $|x|+|y| \leq n$ and a domino tiling is a non-overlapping covering by two by one rectangles~\cite{EKLP92}. 
To obtain a random domino tiling of the Aztec diamond, one assigns weights to particular dominoes, which can be dependent on their location, picking each domino tiling with probability proportional to the product of the domino weights in that domino tiling. These models are often studied on the dual graph with a tile becoming a dimer, and the resulting random tiling probability measure is known as the dimer model.

These random tilings contain many fascinating asymptotic behaviors that should be apparent in other statistical mechanical models.  Indeed, for large random tilings limit shape curves emerge splitting the domain into different macroscopic regions, of which there are three types: frozen, where the configurations are deterministic; rough, where the correlations between tiles decay polynomially; smooth, where the correlations between tiles decay exponentially.  These phases were characterized for dimer models on bipartite graphs in~\cite{KOS03}.

To study these interesting asymptotic behaviors, one of the main approaches in recent years has been to find a non-intersecting path picture for the tiling. Using a combination of the Lindstr\"om-Gessel-Viennot theorem  and the Eynard-Mehta Theorem (e.g. see~\cite{RB04}),  the correlation kernel of the underlying determinantal point process of the particle system defined through the paths can be written in terms of the inverse of a particular principal submatrix of a product of transition matrices.  Finding an explicit expression for that inverse and thus the correlation kernel,  one that is amenable for asymptotic analysis, poses a serious challenge that has only been carried out in special situations.   For instance, it has been worked out for models that are Schur processes, such as uniformly random domino tilings of the Aztec diamond~\cite{Jo03}. For Schur processes,  the transition matrices are doubly infinite Toeplitz matrices and (in an appropriate limit) the inverse can be computed using a Wiener-Hopf factorization of the product of the symbols. As the symbols are scalar-valued, finding a Wiener-Hopf factorization is a mere reordering of the symbols in the product (see for example~\cite{Joh17}). In~\cite{BD19} the authors introduced a natural generalization of Schur process, one that includes doubly periodically weighted domino tiling of the Aztec diamond, by taking block Toeplitz matrices as transition matrices. In this case, the symbols are matrix-valued and this complicates a Wiener-Hopf factorization.    Still, it is possible to define a refactorization procedure that provides such a Wiener-Hopf factorization, and in special situations this Wiener-Hopf factorization is even explicit.  Once formulas for the correlation kernel of the determinantal point process have been found in a suitable form, fine asymptotic analysis unlocks the full asymptotic picture, which is often unavailable in more complicated models.

An alternate approach for random tiling models has been through the Kasteleyn matrix $K$ and its inverse.  The (Percus)-Kasteleyn matrix is a type of signed adjacency matrix whose rows and columns are indexed by the black and white vertices of the graph respectively. The appeal of the inverse Kasteleyn matrix is that it is the correlation kernel of the determinantal point process on the edges of the graph \cite{Ken97}.  Computations of this inverse are only known for periodic graphs and in certain special cases such as the Aztec diamond.  For instance, a procedure for computing the inverse Kasteleyn matrix for the Aztec diamond was given~\cite{CY14} in a certain setting, which showed that the entries could be computed using recurrence relations from an entry-wise expansion of the matrix equations $K.K^{-1}=K^{-1}.K=\mathbbm{I}$ and a boundary recurrence relation.  This boundary recurrence relation involved transformations of the entries of the inverse Kasteleyn matrix under the domino shuffle\footnote{We define the domino shuffle as applying the square move on all even faces followed by edge contraction of all two valent vertices and a shift; see \cref{sec:dynamics} for details.}, which is a particular graphical move special to 4-valent faces of the graph.

The {main} purpose of this paper is to show that both formulations for computing correlations for \emph{arbitrarily} weighted domino tilings of the Aztec diamond are equivalent, relying on the same amount of computational complexity.  {Along the way, we show that commuting transition matrices arising from the non-intersecting path picture for arbitrary weights is equivalent to the domino shuffle, thus providing an analog to the role of Yang-Baxter for the six-vertex model~\cite{Bax82}; see Corollary~\ref{cor:shufflematrixsame}. {This had only previously been noted for Schur processes~\cite{BCC:14}}. We next give an overview of the main results along with an outline of the paper.

\subsection{Outline of the main results}

Since our results hold for arbitrary weights and minimal assumptions, we start our discussion of the Aztec diamond from first principles. We therefore start in Section~\ref{sec:Preliminaries} by recalling the basics on the Kasteleyn approach, and the Eynard-Mehta Theorem for the non-intersecting paths in Section~\ref{sec:nonintersecting}.

Our first main result is a general expression in Theorem~\ref{thm:eqAztecLGV} for the inverse Kasteleyn matrix that involves the inverse of a matrix that counts the DR-paths on the Aztec diamond,    very similar to  the Eynard-Mehta Theorem for the non-intersecting paths process. In fact, when setting this up in the slightly larger domain, cf. Theorem~\ref{thm:AztecTowerLGV}, it is exactly the same matrix from the Eynard-Mehta Theorem that needs to be inverted. This shows that the two approaches ultimately boil down to the same question.

In Section~\ref{sec:dynamics} we then discuss and compare two fundamental discrete dynamical systems on the infinite underlying weighted graphs for the dimer model and the non-intersecting path process. For the dimer model, we apply the well-known square move on all even faces, as one does in the domino shuffle.  For the non-intersecting path processes,  we use a matrix refactorization by swapping all even transition matrices with their consecutive odd neighbor. We show that these two systems are the equivalent in the sense that each iteration changes the face weights of the underlying graphs identically. For special choices of doubly periodic weights (as we briefly discuss below), both systems have been used in the literature to compute the correlation functions, and we will show that this can be done for arbitrary weights.

The dynamics (and its reverse) provides us, up to a trivial shift, with an LU- and UL-decomposition  of the product of the transition matrices.   For the Eynard-Mehta Theorem we need to invert a particular submatrix of the product of transition matrices, and the LU- and UL decompositions do not immediately provide an inverse of this matrix. Inspired by a similar analysis for the special case of block Toeplitz matrices~\cite{Wid74}, we show in Section~\ref{sec:refactorizaion}  that it is possible to provide an explicit expression for an approximate inverse, with only  very minor assumptions on the weights (in particular, no periodicity is required). The approximation converges to the inverse when the size of the submatrix tends to infinity. Our analysis culminates in an expression of the correlation kernel that only involves the LU-decomposition and the transition matrices, cf. Theorem~\ref{thm:generalKernel}. Although the expression we obtain is not yet in a form that one can start an asymptotic study, it is valid under fairly weak conditions on the parameters, and we find it remarkable that it can be carried out in this generality. Moreover, it covers the result of~\cite{BD19} as a special case, including doubly periodic weights, and even provides an alternative more direct proof, which we included in Appendix~\ref{appendix:BD}. 

The results of Section~\ref{sec:periodic} are discussed in the next subsection.  Finally in Section~\ref{sec:inversesbyKas}, we show how to use the domino shuffle to compute the boundary recurrence relations, a method used in~\cite{CY14} and outline the steps needed to compute the inverse of the Kasteleyn matrix of the Aztec diamond.

 \subsection{Doubly periodic weightings}

In Section~\ref{sec:periodic} we discuss how our general procedure specializes to doubly periodic domino tilings of the Aztec diamond in which there has been significant progress in recent years. The attraction of these types of models is that they are currently the only statistical mechanical model with all three types of macroscopic regions present which also have explicit formulas for their correlations.  {Indeed, the original motivation for studying the two-periodic Aztec diamond was to study the probabilistic behavior at the rough-smooth boundary which is a transition between polynomially and exponentially decaying regions. This type of interface is believed to appear in other statistical mechanical models such as the six-vertex model with domain wall boundary conditions (with the associated parameter $\Delta<-1$) and low temperature 3D Ising models with certain boundary conditions. }

The starting point was in~\cite{CY14}, where a formula for the inverse Kasteleyn matrix was derived for the two-periodic Aztec diamond which was later simplified to a form suitable for asymptotic analysis in~\cite{CJ16}, leading to asymptotic results for the two-periodic Aztec diamond including the behavior at the rough-smooth boundary~\cite{BCJ18,BCJ22,JM21,Bai22} {which shows that the behavior is much more nuanced than the behavior at the frozen-rough boundary}.   An alternative approach is to find the correlation kernel of the determinantal point process for the particle system associated to the non-intersecting path picture which gives two different methods.  One of these methods used matrix orthogonal polynomials combined with Riemann Hilbert techniques~\cite{DK21} while the other method, introduced in~\cite{BD19}, is an important inspiration for the  general  refactorization  of the present paper.

For doubly periodic weights, the transition matrices are doubly infinite block Toeplitz.  The refactorization procedure in this case   is equivalent to a Wiener-Hopf factorization of the symbol corresponding to the product of the transition matrices. For various models, such as the two-periodic Aztec diamond and even a class of weightings with higher periodicity \cite{Ber19},  the dynamical system determined by the refactorization procedure is periodic and this allows for a very explicit double integral formulation for the correlation kernel that can be analyzed asymptotically. In general, tracing this dynamical system is not an easy task.  In a recent work~\cite{BD22}, the authors showed that for the biased two-periodic Aztec diamond the dynamical system from the refactorization is equivalent to a linear flow on an elliptic curve, and it is reasonable to expect that the general case can be linearized on the Jacobian of the spectral curve.

{It is important to note that the (matrix)-orthogonal method of~\cite{DK21} for doubly periodically weighted tilings, is essentially based on an LU-decomposition of the submatrix of the product of transition matrices, whereas the Wiener-Hopf factorization in~\cite{BD19} is an LU- and UL-decomposition for the entire matrix. The LU-decomposition is hiding in the orthogonality condition, but it was an important fact in~\cite{DK21}.  The benefit of the approach of~\cite{DK21} is that it also holds in more general situations. Moreover,  one can use tools from complex analysis, such as the Riemann-Hilbert problem, to study these polynomials. This has been  carried out for several interesting tiling models~\cite{DK21,Cflower,CDKL,GrKu21}. For the Aztec diamond the polynomials simplify significantly, and therefore one can circumvent this heavy machinery. It is also important to observe that the orthogonal polynomials only occur for weightings that have at least one direction in which they are periodic. }

Restricting our results to doubly-periodic weights, means that Wiener-Hopf factorization is equivalent to the domino shuffle. The dynamical system from domino shuffle, known as the dimer cluster integrable system introduced in~\cite{GK13}, has been studied extensively in various contexts~\cite{GSTV:16, KLRR:18, AGR:21, Izo21} for example, under the guise of the octahedron recurrence~\cite{Spe07, DFSG:14, DiFr14}. These dynamics also have a probabilistic interpretation~\cite{CT19,CT21} and when applied to the Aztec diamond, giving a powerful method for perfect simulation of domino tilings of the Aztec diamond with arbitrary weights~\cite{Pro03}. 
Finally, we mention that the connection between the dimer cluster integrable system and matrix refactorization is currently being investigated~\cite{BGR22}.

\subsection*{Acknowledgements}
{Both authors wish to thank the Galileo Galilei Institute for their hospitality and support during the scientific program on \lq Randomness, Integrability, and Universality\rq.  SC was supported by EPSRC EP\textbackslash T004290\textbackslash 1.  MD was partially supported by the Swedish Research Council (VR), grant no 2016-05450 and grant no. 2021-06015, and the European Research Council (ERC), Grant Agreement No. 101002013.

We thank C. Boutillier and T. Helmuth for helpful discussions.  
We would also like to thank Alexei Borodin, in particular for discussions on the proof of Theorem~\ref{thm:bd19}.  We are grateful to Tomas Berggren for fruitful discussions and  his comments on a preliminary version of the paper, especially for pointing out the content of Remark~\ref{rem:periodicitylost} to us. Finally, we would like to thank the anonymous referee for their detailed comments which led to a significant improvement to this manuscript.

\section{Preliminaries on the Aztec diamond and the Kasteleyn approach} \label{sec:Preliminaries}

In this section, we give the general setup, definitions of the Aztec diamond and tower Aztec diamonds graphs, and the Kasteleyn matrices associated to these graphs. We specify the importance of the inverse (of the) Kasteleyn matrix for computing correlations. Finally, we introduce the DR-path picture for domino tilings of the Aztec diamond and tower Aztec diamond graphs. 

We note that throughout the paper, we use the notation that for a matrix $M=(m_{i,j})_{1 \leq i,j \leq n}$, $M(i,j)=m_{i,j}$, depending on whichever is most convenient.   We will also use the notation $\mathbbm{I}_B$ to denote the indicator of a set $B$.

\subsection{General Setup} \label{subsec:generalsetup}
We consider the dimer model on (finite) planar bipartite graphs $G=(V,E)$.  A dimer configuration is a collection of edges such that each vertex is incident to exactly one edge of the collection.  To each edge of the graph, assign a positive number, that is $w:E \to \mathbb{R}$ with $w(e)>0$ for all $e \in E$.  The weight of a dimer configuration, $M$,  is equal to the product of the edge weights in that dimer configuration, that is $\prod_{e \in M} w(e)$.  The dimer model is the probability measure where each dimer configuration is
picked with probability proportional to the product of the edge weights. In other words, the probability of a dimer configuration $M$ is equal to 
\[
    \frac{\prod_{e \in M}w(e)}{\sum_{N \in \mathcal{M}}\prod_{e \in N}w(e)}
\]
where the sum is over all possible dimer configurations, $\mathcal{M}$, of the graph $G$. 

For this paper, we work only on the square grid and particular subgraphs of it.   Introduce the vertex sets
\[
\mathtt{W}=\{(i,j)\in \mathbb{Z}^2: i \hspace{-3mm}\mod 2=1, \, j\hspace{-3mm}\mod 2=0\} 
\]
and
\[
\mathtt{B}=\{(i,j)\in \mathbb{Z}^2: i\hspace{-3mm}\mod 2=0, \,j\hspace{-3mm}\mod 2=1\},
\]
which denote the white and black vertices.  The centers of the faces of the infinite graph are given by $(i,j) \in (2\mathbb{Z})^2$ or $(i,j) \in (2\mathbb{Z}+1)^2$.  

Although we introduced the edge weights above, as mentioned in~\cite{GK13}, it is in fact the face weights which  parameterize the dimer model\footnote{Put briefly, one can obtain any dimer configuration from another using pairwise flips of dimers around the faces of the graph.  Under each flip, the weight of the configuration  changes (from the weight of the configuration prior to that flip) by a multiplying or dividing through by a face weight.}.  The face weights are defined as the alternating product of the edge weights around each face viewed from a  clockwise orientation, such that the weights of the edges of the black to white vertices are in the numerator and the weights of the edges of the white to black vertices are in the denominator of this alternating product. 
  For $i,j \in \mathbb{Z}$, let the face weight of the face whose center is given by $(2i+1,2j+1)$ be equal to $F_{2i,j}$ and let the face weight of the face whose center is given by $(2i+2,2j+2)$ be equal to $F_{2i+1,j}$.

Without loss of generality, we fix a convention for the edge-weights used throughout this paper. For $\mathtt{b}=(2i,2j+1)$ with $i,j \in \mathbb{Z}$ we assert that the edge weights of the edges $(\mathtt{w},\mathtt{b})$ are given by
\begin{itemize}
\item 1 if $\mathtt{w}=(2i-1,2j+2)$ or $\mathtt{w}=(2i-1,2j)$
\item $a_{i,j}$ if $\mathtt{w}=(2i+1,2j+2)$, 
\item $b_{i,j}$ if $\mathtt{w}=(2i+1,2j)$
\end{itemize}
for $a_{i,j},b_{i,j}>0$ for all $i,j \in \mathbb{Z}$. With our conventions, each face $(2i+1,2j+1)$ has face weight $F_{2i,j}=\frac{a_{i,j}}{b_{i,j}}$ while each face $(2i+2,2j+2)$ has face weight $F_{2i+1,j}=\frac{b_{i+1,j+1}}{a_{i+1,j}}$. We will use this weighting throughout the paper.

\begin{rem} \label{rem:periodicitylostpre}
    Note that setting the weight of the edges  $((2i,2j+1),(2i-1,2j))$ equal to $1$ can be done without loss of generality. Indeed, in a general edge weighting one can always change all edge weights so that the edges  $((2i,2j+1),(2i-1,2j))$ have weights $1$ without changing the face weights. This can be achieved by a so-called successive application of gauge transformations, in which one multiplies each edge weight around a given vertex by a common factor. We note, however, that this can have an effect on the structure of the edge weights (see for instance Remark~\ref{rem:periodicitylost}).
\end{rem}

\subsection{The Aztec diamond}

We introduce the Aztec diamond graph of size $n$ denoted by $G_n^{\mathrm{Az}}$.  Let
\begin{equation}
\mathtt{W}_n^{\mathrm{Az}} =\{(2j+1,2k):0 \leq j \leq n-1 , 0 \leq k \leq n\} 
\end{equation}
and 
\begin{equation}
\mathtt{B}_n^{\mathrm{Az}} =\{(2j,2k+1):0 \leq j \leq n , 0 \leq k \leq n-1\}.
\end{equation}
The edges are given by
\begin{equation}
\begin{split}
&\mathtt{E}_n^{\mathrm{Az}} =\{((2j+1,2k),(2j+1\pm1,2k+1)):0 \leq j \leq n-1 , 0 \leq k \leq n-1\}\\
\cup&\{((2j+1,2k),(2j+1\pm1,2k-1)):0 \leq j \leq n-1 , 1 \leq k \leq n\}.
\end{split}
\end{equation}

Let $[n]=\{1,\dots,n\}$.   We assign a specific ordering to the white and black vertices, which are given by the functions $w_n^{\mathrm{Az}}:[n(n+1)] \to \mathtt{W}_n^{\mathrm{Az}}$ and $b_n^{\mathrm{Az}}:[n(n+1)] \to \mathtt{B}_n^{\mathrm{Az}}$ where
\begin{equation}\label{eq:wAz2}
    w_n^{\mathrm{Az}}(i) = \left\{\begin{array}{ll}
        (2i-1,0) &\mbox{if } 1 \leq i \leq n \\
    (2 [i-1]_n+1, 2n+2-2\lfloor \frac{i-1}{n} \rfloor)&  \mbox{otherwise} \end{array} \right.
\end{equation}
and
\begin{equation}\label{eq:bAz2}
b_n^{\mathrm{Az}}(i) = \left\{\begin{array}{ll}
(0,2i-1) &\mbox{if } 1 \leq i \leq n \\
(2 [i-1]_n+2, 2n+1-2\lfloor \frac{i-1}{n} \rfloor) &\mbox{otherwise} \end{array} \right.
\end{equation}
for $1 \leq i \leq n(n+1)$, where $[i]_n=i\mod n$.  See Fig.~\ref{fig:Azteclabels} for an example of these labels.  
\begin{center}
\begin{figure}
    \includegraphics[height=6cm]{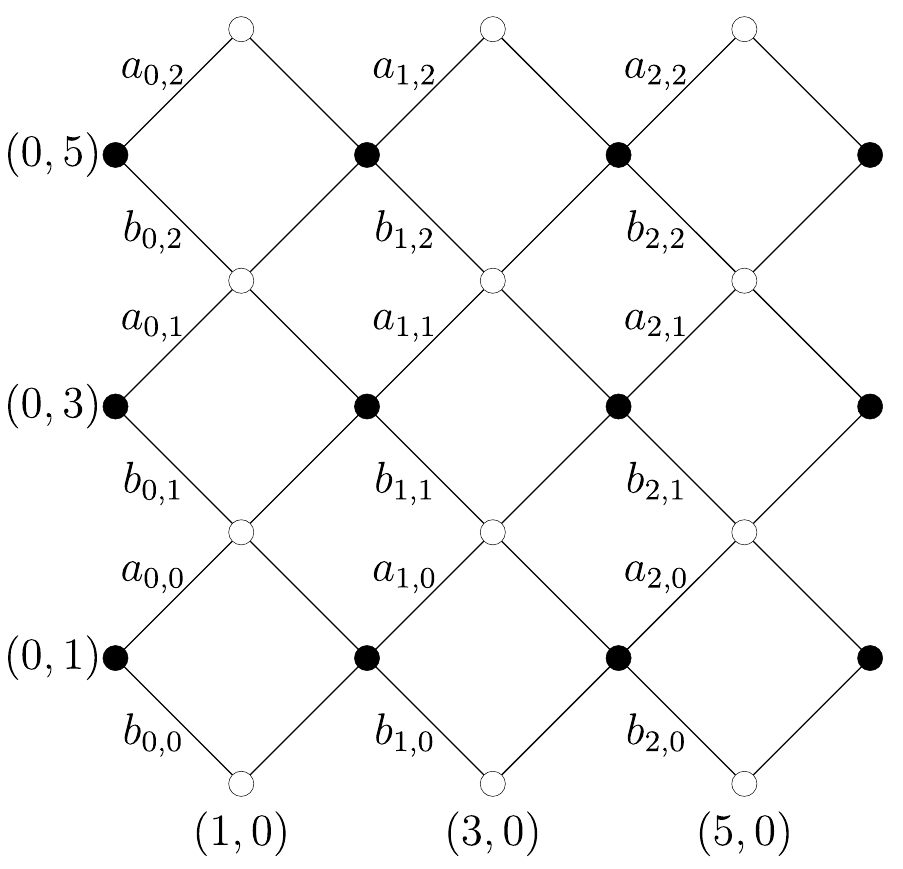}
\includegraphics[height=6cm]{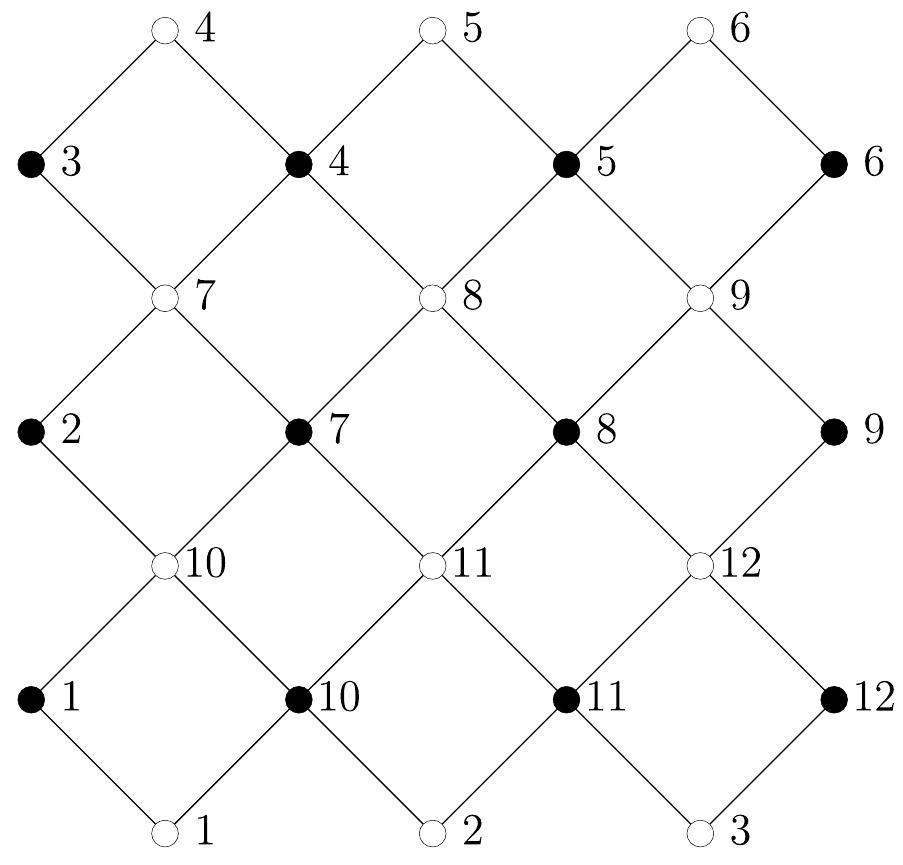}
    \caption{An Aztec diamond of size 3 with the Cartesian coordinates given on the left and the vertex labels on the right, including the edge weights with our conventions given in \cref{subsec:generalsetup}. The unmarked edges have weight 1.  }
    \label{fig:Azteclabels}
\end{figure}
\end{center}

The Kasteleyn(-Percus) matrix on $G_n^{\mathrm{Az}}$, $K_n^{\mathrm{Az}}:\mathtt{B}_n^{\mathrm{Az}} \times\mathtt{W}_n^{\mathrm{Az}} \to \mathbb{C}$ is given by 
\begin{equation}\label{eq:KnAz}
K_n^{\mathrm{Az}}(x,y)=\left\{ \begin{array}{ll}
1  & \mbox{if $y-x=(-1,-1)$} \\
\mathrm{i} & \mbox{if $y-x=(-1,1)$} \\
a_{r,s} & \mbox{if $y-x=(1,1)$} \\
b_{r,s} \mathrm{i} & \mbox{if $y-x=(1,-1)$} \\
0 & \mbox{otherwise,} \end{array} \right.
\end{equation} 
where $x=(x_1,x_2)$, $r=x_1/2$, and $s=(x_2-1)/2$. 
The significance of the Kasteleyn matrix is explained below.

\subsection{Tower Aztec diamond}
We next introduce the tower Aztec diamond of size $n$ and corridor of size $p$. Informally speaking, this is two Aztec diamonds of size $n$ and $n-1$ stitched together by a strip of the square grid of size $p$; see Fig.\ref{fig:AztecTowlabels}.  This model was introduced in~\cite{BD19}, however, it was not assigned a name. It takes little effort to see that it is not possible to have a dimer configuration of the tower Aztec diamond in which there is an edge with one vertex in the strip and the other in one of the Aztec diamonds. In fact, each dimer configuration consists of three independent dimer configurations: one for each of the two Aztec diamonds and a trivial configuration for the strip. The benefit of using the tower Aztec diamond is that it allows us to use infinite matrices in our analysis, after letting $p$ tend to infinity; see Section~\ref{sec:refactorizaion}. 
Let
\begin{equation}
    \mathtt{W}_{n,p}^{\mathrm{Tow}} =\{(2j+1,2k):0 \leq j \leq n-1 , -p-n+1 \leq k \leq n\} 
\end{equation}
and 
\begin{equation}
    \begin{split}
        &\mathtt{B}_{n,p}^{\mathrm{Tow}} =\{(2j,2k+1):0 \leq j \leq n , 0 \leq k \leq n-1 \\ &\mbox{ or  }0 \leq j \leq n-1 , -p \leq k \leq -1 \mbox{ or }1 \leq j \leq n-1 , -p-n \leq k \leq -1-p\}.
    \end{split}
\end{equation}
The edges here are given by
\begin{equation}
\begin{split}
    &\mathtt{E}_{n,p}^{\mathrm{Tow}} =\{((2j+1,2k),(2j+2,2k+1)):0 \leq j \leq n-1 , 0 \leq k \leq n-1  \\ \,\, &\mbox{ or }0\leq j \leq n-2,-p-n+1 \leq k \leq -1 \}\\
    &\cup \{((2j+1,2k),(2j,2k+1)):0 \leq j \leq n-1 , -p \leq k \leq n-1 \\ \,\, &\mbox{ or }0\leq j \leq n-2,-p-n+1 \leq k \leq -1-p \}\\
    &\cup \{((2j+1,2k),(2j+2,2k-1)):0 \leq j \leq n-1 , 1 \leq k \leq n\\ \,\, &\mbox{ or }0\leq j \leq n-2,1 -p-n\leq k \leq n \}\\
&\cup \{((2j+1,2k),(2j,2k-1)):0 \leq j \leq n-1 , 1-p \leq k \leq n\\ \,\, &\mbox{ or }1\leq j \leq n-1,1 -p-n\leq k \leq n \}.
\end{split}
\end{equation}
Label $G_{n,p}^{\mathrm{Tow}}=(\mathtt{W}_{n,p}^{\mathrm{Tow}}\cup\mathtt{B}_{n,p}^{\mathrm{Tow}},\mathtt{E}_{n,p}^{\mathrm{Tow}})$ to be the tower Aztec diamond of size $n$ with corridor of size $p$.  
We assign a specific ordering to the white and black vertices, which are given by the functions $w_n^{\mathrm{Tow}}:[n(2n+p)] \to \mathtt{W}_{n,p}^{\mathrm{Tow}}$ and $b_n^{\mathrm{Tow}}:[n(2n+p)] \to \mathtt{B}_{n,p}^{\mathrm{Tow}}$ where
\begin{equation}\label{eq:wTow}
    w_{n,p}^{\mathrm{Tow}}(i) =\left\{ \begin{array}{ll} 
        (2n-1,2-2i) &\hspace{-50mm} \mbox{for }1 \leq i \leq n+p \\
        \Bigg(2[i-(n+p+1)]_n+1, 2n-2\lfloor\frac{i-(n+p+1)}{n}\rfloor \Bigg) \\ &\hspace{-50mm} \mbox{for }   n+p+1 \leq i \leq 2n+p+n^2-1  \\
        \Bigg(2[i-(n^2+p+2n)]_{n-1}+1, -2-2\lfloor\frac{i-(n^2+p+2n)}{n-1}\rfloor \Bigg)\\ & \hspace{-50mm}\mbox{otherwise}
    \end{array} \right. 
\end{equation}
and
\begin{equation}\label{eq:bTow}
b_{n,p}^{\mathrm{Tow}}(i) =\left\{ \begin{array}{ll} 
        (0,2n+1-2i) &\hspace{-50mm} \mbox{for }1 \leq i \leq n+p \\
	\Big(2[i-(n+p+1)]_n+2, 2n-2\lfloor\frac{i-(n+p+1)}{n}\rfloor -1\Big)\\  &\hspace{-50mm} \mbox{for }   n+p+1 \leq i \leq n+p+n^2  \\
	\Big(2[i-(n^2+n+p+1)]_{n-1}+2, -2\lfloor\frac{i-(n^2+n+p+1)}{n-1}\rfloor -1\Big) \\ &\hspace{-50mm} \mbox{otherwise}   
\end{array}\right.
\end{equation}
where we recall that $[i]_n=i\mod n$.  See Fig.~\ref{fig:AztecTowlabels} for an example of these labels.  
\begin{center}
\begin{figure}
    \includegraphics[height=7cm]{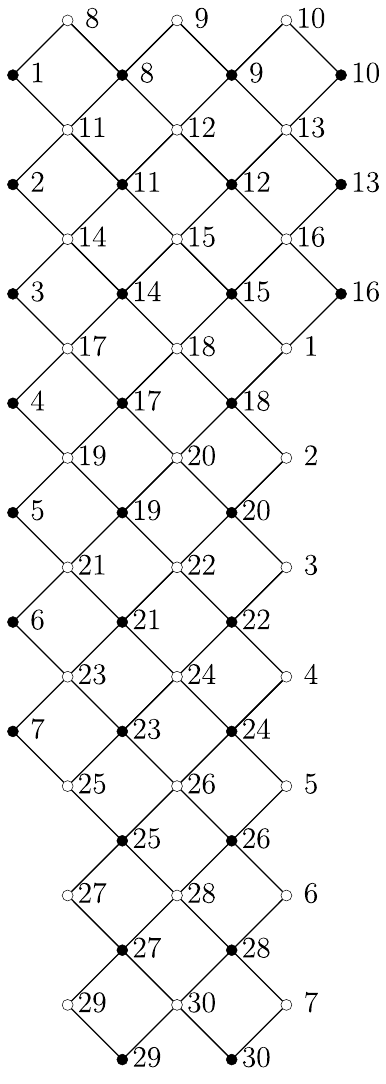}\,\,
\includegraphics[height=7cm]{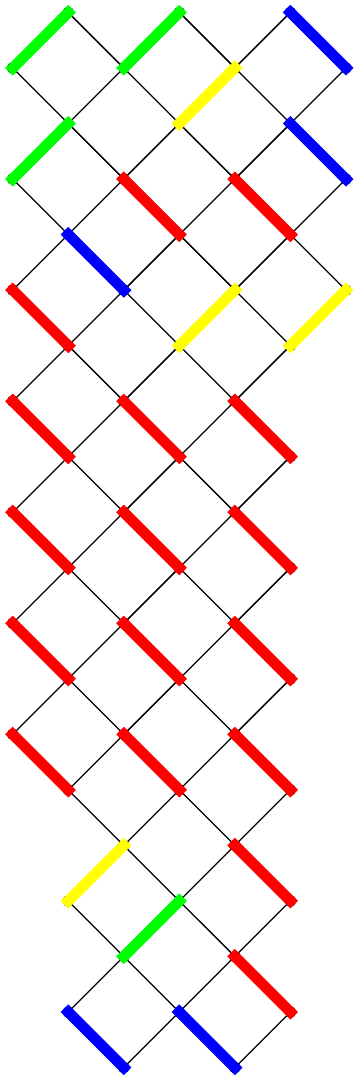}\,\,
\includegraphics[height=7cm]{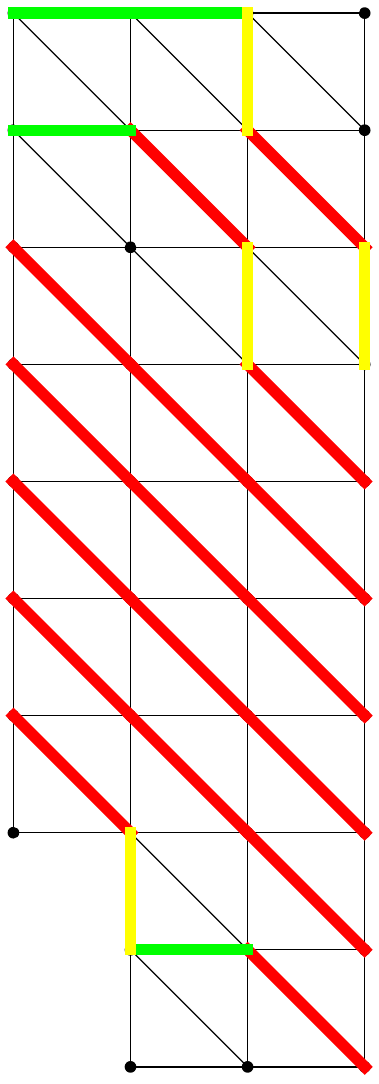}
    \caption{The left figure shows a tower Aztec diamond of size $3$ with corridor of size $4$ with the vertex labels described in~\eqref{eq:wTow} and~\eqref{eq:bTow}. The middle figure shows an example of a tiling and the right figure shows the corresponding non-intersecting lattice path picture.  }
    \label{fig:AztecTowlabels}
\end{figure}
\end{center}

The Kasteleyn matrix on $G_{n,p}^{\mathrm{Tow}}$, defined by $K_{n,p}^{\mathrm{Tow}}:\mathtt{B}_{n,p}^{\mathrm{Tow}} \times\mathtt{W}_{n,p}^{\mathrm{Tow}} \to \mathbb{C}$ is given by 
\begin{equation}\label{eq:KnTow}
K_{n,p}^{\mathrm{Tow}}(x,y)=\left\{ \begin{array}{ll}
1  & \mbox{if $y-x=(-1,-1)$} \\
\mathrm{i} & \mbox{if $y-x=(-1,1)$} \\
a_{r,s} & \mbox{if $y-x=(1,1)$} \\
b_{r,s} \mathrm{i} & \mbox{if $y-x=(1,-1)$} \\
0 & \mbox{otherwise,} \end{array} \right.
\end{equation} 
where $x=(x_1,x_2)$, $r=x_1/2$, and $s=(x_2-1)/2$.

\subsection{The Kasteleyn method}

Kasteleyn's theorem~\cite{Kas61,Kas63,TF61} gives that $|\det K_n^{\mathrm{Az}} |$ equals the number of weighted dimer coverings on $G_n^{\mathrm{Az}}$ while $|\det K_{n,p}^{\mathrm{Tow}} |$ equals the number of weighted dimer coverings on $G_{n,p}^{\mathrm{Tow}}$.  We have chosen the sign conventions in~\eqref{eq:KnAz} and~\eqref{eq:KnTow} so that the sign of $\det K_n^{\mathrm{Az}}$ equals $(-1)^{\lfloor (n+1)/2 \rfloor}$ whereas the sign in $\det K_{n,p}^{\mathrm{Tow}}$ equals 
\[
\frac{1}{2} (1+(-1)^p)(-1)^n + \frac{1}{2} (1-(-1)^p) \mathrm{i}^n.
\]
Both of these follow after a computation which we omit in this paper.

In what follows below, it is useful to define $K_n=K_n(w_n^{\mathrm{Az}}(i),b_n^{\mathrm{Az}}(j))_{1\leq i,j \leq n(n+1)}$ which is the Kasteleyn matrix for the Aztec diamond using the specific ordering of the white and black vertices as well as defining  $K_{n,p}=K_{n,p}(w_{n,p}^{\mathrm{Tow}}(i),b_{n,p}^{\mathrm{Tow}}(j))_{1\leq i,j \leq n(2n+p)}$ which is the Kasteleyn matrix for the tower Aztec diamond.  This gives a more compact notation for our Kasteleyn matrices and the subscript $p$ helps distinguish between the two.

The inverse of the Kasteleyn matrix can be used to compute statistics. We only state our result for the Aztec diamond graph and the formulation is analogous for the tower Aztec diamond as well as other graphs.  Suppose that $E=\{e_i\}_{i=1}^m$ with $e_i=(b_i,w_i)$  are a collection of distinct edges  with $b_i$ and $w_i$ denoting black and white vertices.  

\begin{thm}[\cite{Ken97,Joh17}]
The dimers form a determinantal point process on the edges of the Aztec diamond graph with correlation kernel given by $L(e_i,e_j)=K_n(b_i,w_i) K_n^{-1}(w_j,b_i)$, that is
$$
\mathbb{P}[e_1,\dots, e_m \mbox{ are covered by dimers}]=\det L(e_i,e_j)_{i,j=1}^m.
$$
\end{thm}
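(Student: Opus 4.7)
The plan is to follow the standard reduction from edge correlations to a ratio of weighted dimer partition functions, and then apply Jacobi's complementary minor identity to rewrite that ratio in terms of entries of the inverse Kasteleyn matrix.

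First I would write
$$\mathbb{P}[e_1,\ldots,e_m \text{ covered by dimers}] = \frac{\prod_{i=1}^m w(e_i) \cdot Z(G_n^{\mathrm{Az}} \setminus V(E))}{Z(G_n^{\mathrm{Az}})},$$
where $V(E) = \{b_1,w_1,\ldots,b_m,w_m\}$ (a set of $2m$ distinct vertices since the $e_i$ are distinct edges in a bipartite graph), and $Z(\cdot)$ denotes the weighted dimer partition function. The key input is Kasteleyn's theorem, already invoked above: the denominator equals $|\det K_n|$, and the partition function $Z(G_n^{\mathrm{Az}} \setminus V(E))$ equals $|\det K_n^{\hat{E}}|$, where $K_n^{\hat{E}}$ is the submatrix of $K_n$ obtained by deleting the $m$ rows indexed by $\{b_i\}_{i=1}^m$ and the $m$ columns indexed by $\{w_i\}_{i=1}^m$.

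Next I would invoke Jacobi's identity for complementary minors: for any invertible matrix $M$ and equal-size index sets $I,J$,
$$\det M[I^c, J^c] = (-1)^{\sigma(I,J)} \det M \cdot \det (M^{-1})[J, I],$$
where $\sigma(I,J) = \sum_{i\in I} i + \sum_{j\in J} j$. Applying this to $M = K_n$ with $I$ and $J$ indexing $\{b_i\}$ and $\{w_i\}$ respectively, and using the edge-weight identity $|K_n(b_i,w_i)| = w(e_i)$, gives
$$\mathbb{P}[e_1,\ldots,e_m \text{ covered by dimers}] = \Bigl|\prod_{i=1}^m K_n(b_i, w_i) \cdot \det (K_n^{-1}(w_j, b_i))_{i,j=1}^m\Bigr|.$$
Pulling the row-dependent scalar $K_n(b_i,w_i)$ into row $i$ of the determinant then yields $|\det L(e_i, e_j)_{i,j=1}^m|$ as claimed.

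The main obstacle is the sign bookkeeping, that is, the removal of the absolute value on the right-hand side. This amounts to showing that the quantity $\prod_{i=1}^m K_n(b_i, w_i) \cdot \det (K_n^{-1}(w_j, b_i))_{i,j=1}^m$ is always non-negative (and in fact equal to the probability). This is a purely combinatorial property of the Kasteleyn orientation fixed in~\eqref{eq:KnAz}: the signs and factors of $\mathrm{i}$ are designed so that every dimer-cover term in the expansion of $\det K_n$ contributes the same phase, and the same reasoning applied to the subgraph $G_n^{\mathrm{Az}}\setminus V(E)$, endowed with the induced Kasteleyn orientation, shows that its sign $\det K_n^{\hat{E}}/|\det K_n^{\hat{E}}|$ combines with $(-1)^{\sigma(I,J)}$, with the sign of $\det K_n$, and with the phases of $\prod_i K_n(b_i,w_i)$ to cancel exactly. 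Verifying this compatibility is in essence the content of Kasteleyn's theorem for the regions obtained by puncturing the Aztec diamond along $V(E)$; once it is checked, the determinantal formula follows.
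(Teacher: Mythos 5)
The paper does not prove this theorem: it is stated as a quoted result with the citation [Ken97, Joh17], so there is no internal proof to compare against. Your argument is the standard proof from those references (reduce the correlation to a ratio of weighted partition functions, apply Kasteleyn's theorem and Jacobi's complementary-minor identity, then remove the absolute values by a sign analysis), and it is essentially correct. One small refinement to the last step: you do not need to verify Kasteleyn's theorem separately for the punctured regions $G_n^{\mathrm{Az}}\setminus V(E)$. Since each $e_i=(b_i,w_i)$ is an edge, any perfect matching $M$ of $G_n^{\mathrm{Az}}\setminus V(E)$ extends to the perfect matching $M\cup E$ of $G_n^{\mathrm{Az}}$, and the term of $M\cup E$ in $\det K_n$ factors as a fixed permutation sign times $\prod_i K_n(b_i,w_i)$ times the term of $M$ in $\det K_n^{\hat E}$. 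Kasteleyn's theorem for $G_n^{\mathrm{Az}}$ alone then forces all dimer-cover terms of $\det K_n^{\hat E}$ to carry a common phase, which both identifies $|\det K_n^{\hat E}|$ with $Z(G_n^{\mathrm{Az}}\setminus V(E))$ and makes all the phases in your final expression cancel, removing the absolute value without any further combinatorial check.
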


\subsection{DR paths for the Aztec diamond}

Associated to each dimer covering of the Aztec diamond of size $n$, there are DR-lattice paths~\cite{Jo03}.   The vertex set for the DR-lattice paths is given by
\begin{equation}
    \mathtt{V}_n^{\mathrm{Az},\mathrm{DR}}=\{(2j,2k-1):0 \leq j \leq n , 0 \leq k \leq n\},
\end{equation}
and the edge set 
\begin{equation}
\begin{split}
    \mathtt{E}_n^{\mathrm{Az},\mathrm{DR}}=
&\{((2j,2k-1),(2j+2,2k-1)) :0 \leq j \leq n-1 , 0 \leq k \leq n\} \\ & \cup\{((2j,2k-1),(2j,2k-3)) :0 \leq j \leq n , 1 \leq k \leq n\} \\ 
&\cup \{((2j,2k-1),(2j+2,2k-3)) :0 \leq j \leq n-1 , 1 \leq k \leq n\}.
\end{split}
\end{equation}
\begin{center}
\begin{figure}
    \includegraphics[height=5cm]{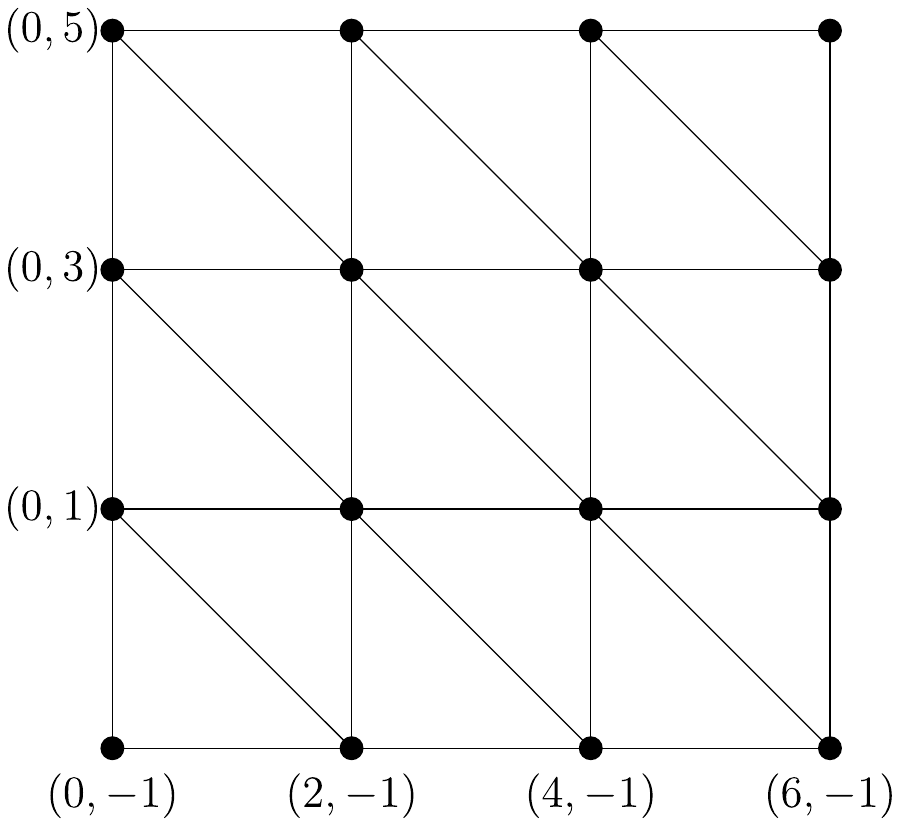}
    \caption{The DR graph corresponding to an Aztec diamond of size 3.}
    \label{fig:LGVgraph}
\end{figure}
\end{center}
Then, we write $G_n^{\mathrm{Az},\mathrm{DR}}=( \mathtt{V}_n^{\mathrm{Az},\mathrm{DR}}, \mathtt{E}_n^{\mathrm{Az},\mathrm{DR}})$ and label this graph the DR graph for the Aztec diamond. 
Fig.~\ref{fig:LGVgraph} shows an example of the DR graph for the Aztec diamond. 
The lattice paths start at the vertices $\{(0,2k-1), 0 \leq k \leq n\}$ and end at $\{(2j,-1), 0 \leq j \leq n\}$. We use the convention that we drop the path from $(0,-1)$ to $(0,-1)$ as this path is trivial. The paths are non-intersecting meaning that they cannot share a vertex.

The correspondence between dimers on the Aztec diamond graph and the DR lattice paths is given as follows:
\begin{itemize}
    \item if a dimer covers the edge $((2i,2j+1),(2i+1,2j))\in \mathtt{E}_n^{\mathrm{Az}}$ with $(2i,2j+1) \in \mathtt{B}_n^{\mathrm{Az}}$, then there is an edge $((2i,2j+1),(2i+2,2j-1))$ in $\mathtt{E}_n^{\mathrm{Az},\mathrm{DR}}$;
    \item if a dimer covers the edge $((2i,2j+1),(2i+1,2j+2))\in \mathtt{E}_n^{\mathrm{Az}}$ with $(2i,2j+1) \in \mathtt{B}_n^{\mathrm{Az}}$, then there is an edge $((2i,2j+1),(2i+2,2j+1))$ in $\mathtt{E}_n^{\mathrm{Az},\mathrm{DR}}$; 
    \item if a dimer covers the edge $((2i,2j+1),(2i-1,2j))\in \mathtt{E}_n^{\mathrm{Az}}$ with $(2i,2j+1) \in \mathtt{B}_n^{\mathrm{Az}}$, then there is an edge $((2i,2j+1),(2i,2j-1))$ in $\mathtt{E}_n^{\mathrm{Az},\mathrm{DR}}$. 
\end{itemize}
The edge weights of the dimers transfer directly to the edges associated to the lattice paths.  Recall that the edge weight of $((2i,2j+1),(2i-1,2j+2))\in \mathtt{E}_n^{\mathrm{Az}}$ for $(2i,2j+1) \in \mathtt{B}_n^{\mathrm{Az}}$ is equal to 1 and so we conclude that each weighted dimer covering is in one-to-one correspondence with each weighted lattice path configuration.
\begin{center}
\begin{figure}
    \includegraphics[height=5cm]{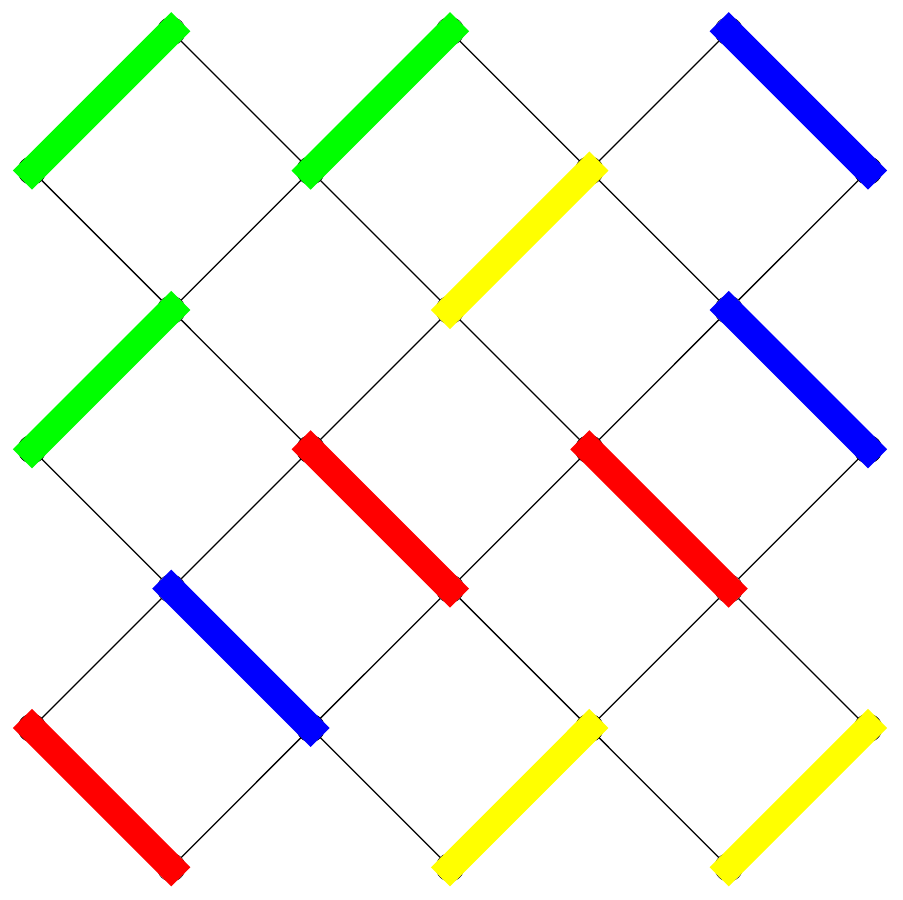}
\includegraphics[height=5cm]{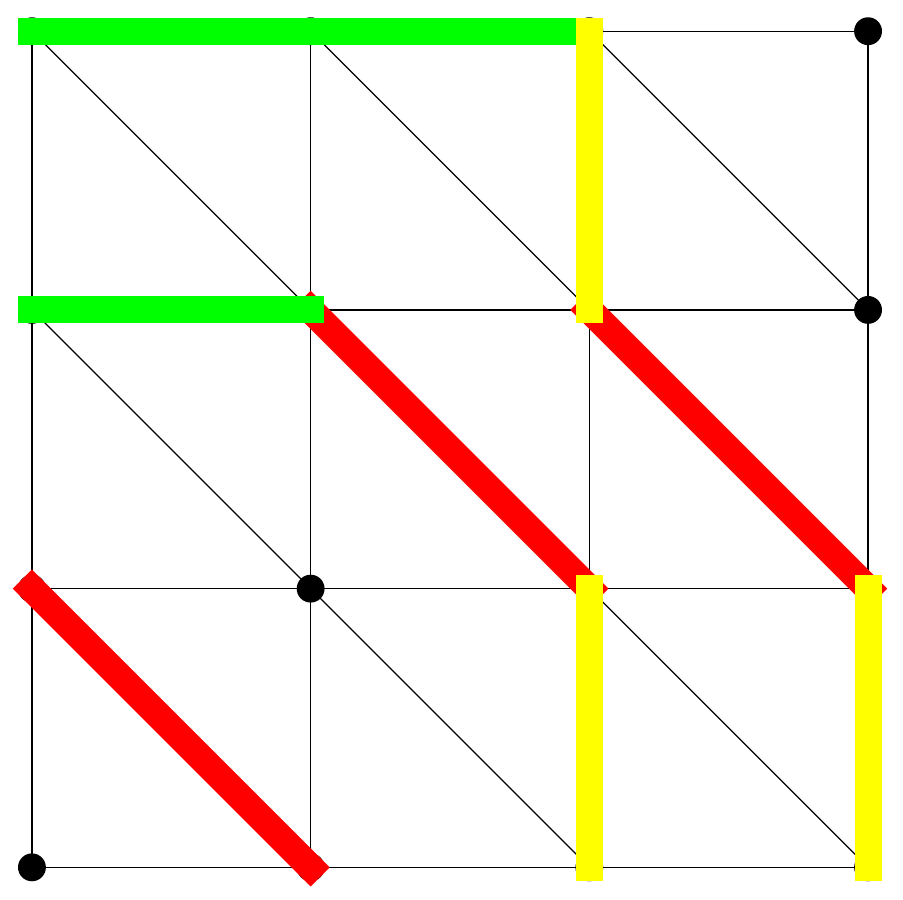}
    \caption{A dimer covering of an Aztec diamond of size 3 and the corresponding non-intersecting lattice paths}
    \label{fig:Aztec3dimerspath}
\end{figure}
\end{center}

\subsection{Tower Aztec diamond DR paths} \label{sec:DRtower}
As with the Aztec diamond case, we can also associate DR-lattice paths to the tower Aztec diamond.  The vertex set for the DR-lattice paths for the tower Aztec diamond of size $n$ and corridor of size $p$ is given by
\begin{equation}
\begin{split}
\mathtt{V}_{n,p}^{\mathrm{Tow},\mathrm{DR}}&=\{(2j,2k-1):0 \leq j \leq n , -p \leq k \leq n\\& \,\, \mbox{ or } 1 \leq j \leq n, -p-n+1\leq k \leq -p-1\},
\end{split}
\end{equation}
and the edge set 
\begin{equation}
\begin{split}
 &   \mathtt{E}_{n,p}^{\mathrm{Tow},\mathrm{DR}}=
\{((2j,2k-1),(2j+2,2k-1)) :0 \leq j \leq n-1 , -p \leq k \leq n\} \\ 
& \cup\{((2j,2k-1),(2j,2k-3)) :0 \leq j \leq n , -p+1 \leq k \leq n\} \\ 
&\cup \{((2j,2k-1),(2j+2,2k-3)) :0 \leq j \leq n-1 ,-p+ 1 \leq k \leq n\}\\ 
&\cup\{((2j,2k-1),(2j+2,2k-1)) :1 \leq j \leq n-1 , -n-p \leq k \leq -p-1\} \\ 
& \cup\{((2j,2k-1),(2j,2k-3)) :1 \leq j \leq n , -n-p+1 \leq k \leq -p-1\} \\ 
&\cup \{((2j,2k-1),(2j+2,2k-3)) :1 \leq j \leq n-1 ,-n-p+ 1 \leq k \leq -p-1\}.\\ 
\end{split}
\end{equation}
Let $G_{n,p}^{\mathrm{Tow},\mathrm{DR}}=(V_{n,p}^{\mathrm{Tow},\mathrm{DR}},E_{n,p}^{\mathrm{Tow},\mathrm{DR}})$ and label this graph to be the DR graph for the tower Aztec diamond. 
The lattice paths on this DR graph start at the vertices $\{(0,2k-1), -p+1 \leq k \leq n\}$ and end at $\{(2n,-1-2j), 0 \leq j \leq n+p-1\}$. The same correspondence between paths and dimers for the Aztec diamond  holds for the tower Aztec diamond; see Fig.~\ref{fig:AztecTowlabels}.

\section{Preliminaries on non-intersecting paths}\label{sec:nonintersecting}

We will now recall the  model of non-intersecting paths that is equivalent to the dimer model for the Aztec diamond.

We start with a directed graph $\mathcal G=(\mathtt V,\mathtt E)$, with  the vertex set $\mathtt V=\mathbb Z^2$ and (directed) edges
\begin{multline*}
 \mathtt E= \left\{  ((i,j), (i+1,j)) \mid i,j \in \mathbb Z\right\}
 \cup \left\{  ((2i,j),(2i+1,j-1)) \mid  i,j \in \mathbb Z\right\}\\
 \cup  \left\{  ((2i,j),(2i,j-1)) \mid  i,j \in \mathbb Z\right\}.
\end{multline*}
See also Fig.~\ref{fig:graphLG}. This graph can be thought of as gluing two types of strips in an alternating fashion. The two types of strips can be found in Fig.~\ref{fig:twotype}. Both strips consist of two columns of vertices. One strip has horizontal and diagonal edges all directed from left to the right. The other strip does not have the diagonal edges but instead has edges pointing down between consecutive vertices in the right column.  We then   cover  $\mathbb Z ^2$ by putting copies of the left strip in Fig.~\ref{fig:twotype} such that the  horizontal coordinates  of the vertices in the left column of the strip are even and  putting copies of the other strip such that the  horizontal coordinates of the left column of vertices are odd. 
\begin{figure}[t]
	\includegraphics[height=6cm]{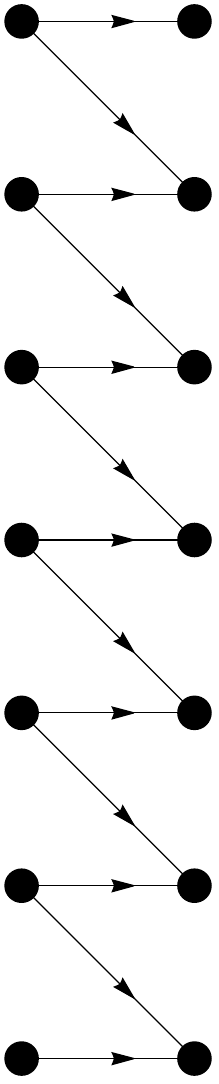}  \hspace*{1cm}
	\includegraphics[height=6cm]{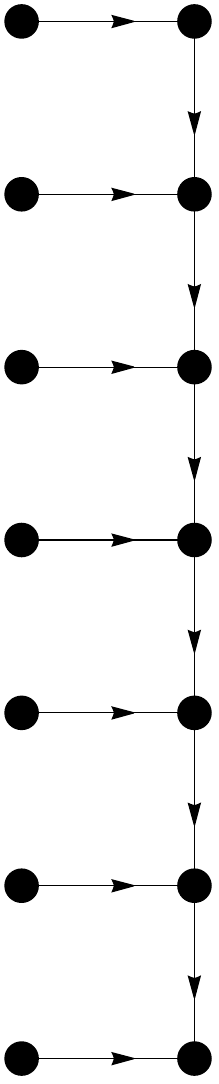}
	\caption{The figure shows the two strips that are the building blocks for the graph $\mathcal G$. Each strip has infinite length and consists of two vertical columns of vertices. The left strip has horizontal and diagonal edges directed from left to right. The right strip has downward edges between consecutive vertices in the right column instead of diagonal edges.} \label{fig:twotype}
\end{figure}

Next we introduce a function  $w:\mathtt V \to [0,\infty)$ that puts weights on all edges as follows
\begin{align*}
    &w((2i,j),(2i+1,j))=a_{i,j}\\
    &w((2i,j),(2i+1,j-1))=b_{i,j}\\
    &w((2i,j),(2i,j-1))=w((2i+1,j),(2i+2,j))=1, 
\end{align*}
for $i,j \in \mathbb Z$.
So only the edges $((2i,j),(2i+1,j))$ and $((2i,j),(2i+1,j-1))$ may have weights different from $1$. 

Based on the edge weights we also assign weights to the faces of the graph. Let $F_{2i,j}$ be the trapezoidal face defined by the vertices $(2i,j)$,  $(2i+2,j)$, $(2i+2,j-1)$ and $(2i+1,j-1)$.   Similarly, let $F_{2i+1,j}$ be the triangular face defined by the vertices $(2i+2,j)$, $(2i+2,j+1)$ and $(2i+3,j)$. We then define the weights of the faces by 
\begin{equation} \label{eq:evenfaceweightpaths}
    F_{2i,j}=\frac{a_{i,j}}{b_{i,j}}
\end{equation}
and 
\begin{equation} \label{eq:oddfaceweightpaths}
    F_{2i+1,j}=\frac{b_{i+1,j+1}}{a_{i+1,j}}.
\end{equation}
In Section~\ref{sec:dynamics} we will use the infinite graph $\mathcal G$, but for the connection with the Aztec diamond we will only need the subgraph that is obtained by gluing strips of the type in Fig.~\ref{fig:twotype} on  $\{0,\ldots,2n\} \times \mathbb Z$ in the same way as before, starting with a copy of the  type on the left of Fig.~\ref{fig:twotype}.

 {
The correspondence between dimers on the tower Aztec diamond graph and the non-intersecting paths on $\mathcal G$ is obtained by inserting trivial horizontal parts after every step in the DR-paths of Section~\ref{sec:DRtower} (albeit trivial, these parts help when applying the LGV-Theorem below, see also \cite{Jo03}).   More precisely,   for a given dimer configuration, construct a collection of paths in the following way:
\begin{itemize}
    \item if there is a dimer covering the edge $((2i,2j+1),(2i+1,2j))\in \mathtt{E}_n^{\mathrm{Tow,Az}}$ with $(2i,2j+1) \in \mathtt{B}_n^{\mathrm{Tow,Az}}$, then select the edges $((2i,j),(2i+1,j-1)), ((2i+1,j-1),(2i+2,j-1))$ in $\mathtt{E}$;
    \item if there is a dimer covering the edge $((2i,2j+1),(2i+1,2j+2))\in \mathtt{E}_n^{\mathrm{Tow,Az}}$ with $(2i,2j+1) \in \mathtt{B}_n^{\mathrm{Tow,Az}}$, then select  the edges $((2i,j),(2i+1,j)),((2i+1,j),(2i+2,j))$ in $\mathtt{E}$; 
    \item if there is a dimer covering the edge $((2i,2j+1),(2i-1,2j))\in \mathtt{E}_n^{\mathrm{Tow,Az}}$ with $(2i,2j+1) \in \mathtt{B}_n^{\mathrm{Az}}$, then select the edge $((2i,j),(2i,j-1))$ in $\mathtt{E}$. 
\end{itemize}
The selected edges in $E$ form non-intersecting paths  $(\pi_1,\ldots,\pi_{n+p})$ in $\mathcal G$   such that 
\begin{enumerate}
   \item for each $j$, the path $\pi_j$ starts in $(0,n-j)$ and ends in $(2n,-j)$,
   \item the paths are non-intersecting, i.e. $\pi_j \cap \pi_k = \varnothing$ for $j \neq k$. 
\end{enumerate}  See Fig.\ref{fig:LGVgraph2} for an example. 
}

Let us denote the set of all such collections of non-intersecting paths by $\Pi_{ni}$. Then we can define a probability measure on $ \Pi_{ni}$ by setting the probability of a given collection  $(\pi_1,\ldots,\pi_{n+p})$ to be proportional to 

 \begin{equation} \label{eq:nonintersectingpaths}
\mathbb P\left((\pi_1,\ldots,\pi_N)\right) \sim \prod_{j=1}^n \prod_{e \in \pi_j} w(e)
 \end{equation}
where $w$ is the weight function on the edges.

For each $i \in \mathbb Z$ we define a $\mathbb Z\times \mathbb Z$ matrix $M_i(j,k)$ and refer to these as transition matrices.   For even indices  the matrices $M_i$  are only non-zero on the diagonal and the subdiagonal right below the main diagonal, and have the values
\begin{equation}\label{eq:eventransitionmatrices}
M_{2i}(j,k)= \begin{cases}
    a_{i,j}, & k=j,\\
    b_{i,j}, & k=j-1,\\
    0, & \textrm{ otherwise.}
\end{cases}
\end{equation}
 For odd indices the matrices $M_i$ are lower triangular, with values
 \begin{equation}\label{eq:oddtransitionmatrices}
 M_{2i+1}(j,k)= \begin{cases}
    1& k  \leq j,\\
     0, & k >j.
 \end{cases}
\end{equation}
Note that the transition matrices $M_{2i}$ and $M_{2i+1}$ correspond to the left and right columns respectively in Fig.~\ref{fig:twotype}.

\begin{figure}[t]
    \begin{center}
    \includegraphics[height=8cm]{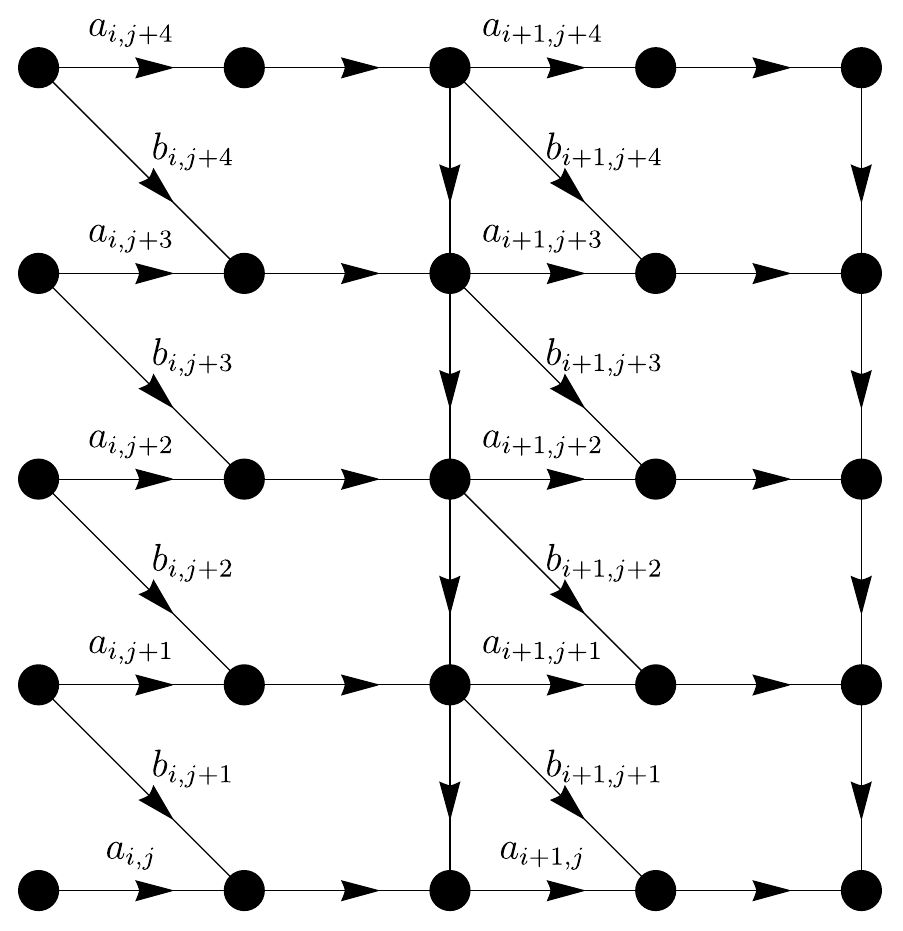}
    \caption{Part of the LGV graphs along with the edge weights.  The unmarked edges all have weight 1.} \label{fig:graphLG}
\end{center}
\end{figure}

\begin{figure}[t]
    \begin{center}
\includegraphics[height=8cm]{AztecTow32paths.pdf}
    \includegraphics[height=8cm]{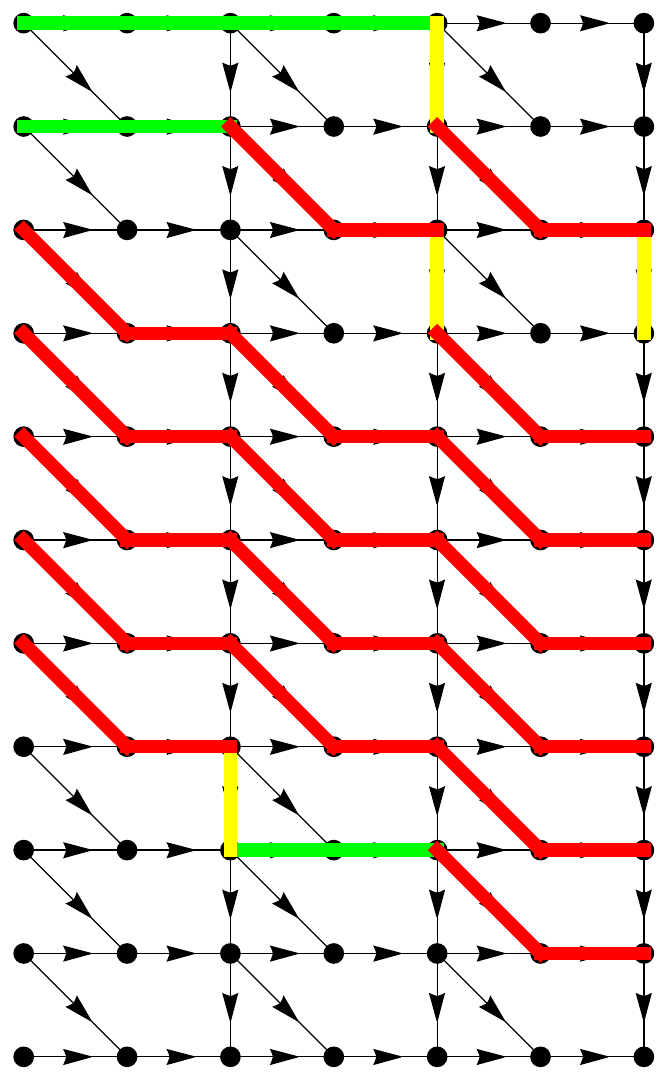}
    \caption{ The DR paths along with the non-intersecting paths. }
    \label{fig:LGVgraph2}
\end{center}
\end{figure}
{The random configuration of paths induces a natural  point process $\{(i,x_j^i)\}_{i=0,j=1}^{2n,n+p}$ where $(i,x_j^i)$ is the lowest vertex in $\pi_j$ at the vertical section with horizontal coordinate $i$.  By applying a celebrated theorem of Lindström-Gessel-Viennot~\cite{GesVie85,Lin73}, which we will henceforth simply refer to as the LGV Theorem, (for completeness, we included this theorem in Appendix~\ref{sec:app:lgv}) the point process has the probability distribution  proportional to a product of determinants
$$
 \prod_{i=0}^{2n-1} \det \left(M_{i}(x_j^i,x_k^{i+1})\right)_{j,k=1}^{n+p}.
$$
The Eynard-Mehta Theorem~\cite{EM97} then tells us that  the process is a determinantal point process:}

\begin{thm}\label{thm:em}\cite{EM97}
Let $W$ be the matrix 
\begin{equation} \label{eq:defWEM}
    W_{rs}= \left( M_{0}\cdots M_{2n-1}\right)(n-r,-s), \qquad r,s=1,\ldots,n+p,
\end{equation}
and set 
\begin{multline} \label{eq:kernelEM}
K(m_1,x_1,m_2,x_2)=- \mathbbm{I}[m_1 >m_2] M_{m_2+1}\cdots M_{m_1}(x_1,x_2)\\
+ \sum_{r,s=1}^{n+p}  (M_{m_1}\cdots M_{2n-1})(x_1,-s) (W^{-1})_{s,r}(M_{0}\cdots M_{m_2-1})(n-r,x_2).
\end{multline}
Then for any $(m_\ell,x_\ell)$ for $\ell=1, \ldots,M$ we have 
\begin{multline}
\mathbb P\left(\text{ paths go through } (m_\ell,x_\ell) \text{ for }\ell=1, \ldots,M\right)\\
= \det \left(K(m_\ell,x_\ell,m_k,x_k)\right)_{\ell,k=1}^M.
\end{multline}
\end{thm}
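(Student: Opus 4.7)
The plan is to derive the statement as a direct specialization of the classical Eynard-Mehta theorem~\cite{EM97}, with the required product-of-determinants joint density supplied by the LGV Theorem. The first step is to observe that for fixed starting positions $x_j^0 = n-j$ and fixed terminal positions $x_j^{2n} = -j$, the joint distribution of the full particle configuration $\{x_j^i\}_{i=0,\ldots,2n;\,j=1,\ldots,n+p}$ induced by~\eqref{eq:nonintersectingpaths} takes the form
\begin{equation*}
\frac{1}{Z}\prod_{i=0}^{2n-1}\det\bigl(M_i(x_j^i, x_k^{i+1})\bigr)_{j,k=1}^{n+p}.
\end{equation*}
This is a consequence of applying LGV within each strip: the entries of the transition matrices in~\eqref{eq:eventransitionmatrices} and~\eqref{eq:oddtransitionmatrices} are precisely the weighted single-layer transition counts between consecutive columns, and the determinantal factor enforces the non-intersection constraint on any pair of strands within that strip. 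Applying LGV globally to non-intersecting paths joining $(0,n-r)$ to $(2n,-s)$ identifies the normalization as $Z=\det W$, where $W$ is the matrix defined in~\eqref{eq:defWEM}.

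With the joint density in this form, the Eynard-Mehta theorem applies essentially verbatim. In its standard formulation with fixed initial and terminal boundary conditions, the theorem involves functions $\phi_r,\psi_s$ encoding the endpoints, the transition matrices $M_i$, and the inverse of the Gram-type matrix $A_{rs}=\sum_{x,y}\phi_r(x)(M_0\cdots M_{2n-1})(x,y)\psi_s(y)$. In our setting these boundary functions are delta functions $\phi_r(x)=\mathbbm{I}[x=n-r]$ and $\psi_s(y)=\mathbbm{I}[y=-s]$, so that $A_{rs}=(M_0\cdots M_{2n-1})(n-r,-s)=W_{rs}$ coincides with the matrix in the statement. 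Substituting these boundary data into the generic Eynard-Mehta kernel yields precisely~\eqref{eq:kernelEM}: the term $-\mathbbm{I}[m_1>m_2](M_{m_2+1}\cdots M_{m_1})(x_1,x_2)$ is the standard free-propagator subtraction that handles two points potentially lying on the same strand, while the double sum over $r,s$ represents the contribution in which one propagates backward from $x_1$ to a terminal vertex via $M_{m_1}\cdots M_{2n-1}$, couples to an initial vertex through $W^{-1}$, and propagates forward from an initial vertex to $x_2$ via $M_0\cdots M_{m_2-1}$.

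The main obstacle is purely bookkeeping rather than conceptual. One must consistently track the indexing of the matrices $M_i$ on the integers inherited from the vertex set of $\mathcal G$, verify that the labeling $n-r,-s$ of the rows and columns of $W$ matches the ordering of starting and ending vertices chosen via~\eqref{eq:wTow} and~\eqref{eq:bTow}, and check that $W$ is invertible so the kernel is well-defined. Invertibility follows from $\det W = Z > 0$, which in turn is immediate from the strict positivity of the edge weights $a_{i,j},b_{i,j}$ together with the existence of at least one non-intersecting path configuration (e.g.\ the one underlying any legitimate tiling of the tower Aztec diamond). All matrix products in~\eqref{eq:kernelEM} reduce to finite sums, since the relevant propagators act between the layers $0$ and $2n$ and the matrices $M_i$ are (essentially) lower triangular, so no convergence issues arise.
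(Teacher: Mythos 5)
Your proposal is correct and follows the same route as the paper, which establishes the product-of-determinants form of the joint law via the LGV Theorem and then simply cites the classical Eynard--Mehta theorem for the determinantal structure and the kernel formula. Your additional remarks on identifying the boundary data as delta functions and on the invertibility of $W$ via $\det W = Z > 0$ are sound and merely make explicit the specialization that the paper leaves implicit.
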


{The power of this result is that studying the asymptotic behavior of the point process, as $n \to \infty$, boils down to studying the limiting behavior of the kernel $K$. Obviously, a major obstacle remains: the expression for the kernel~\eqref{eq:kernelEM} involves the inverse of the matrix $W$ in~\eqref{eq:defWEM} that is growing in size. In the literature, one typically restricts to  special situations, in which more workable expressions for the inverse can be found. In Section~\ref{sec:refactorizaion} we will show how, with only very minor restriction on the weights, the inverse can be computed using a dynamical system defined by refactorizing the matrices $M_i$, giving an LU- and UL-decomposition of the doubly infinite matrix $M_0 \cdots M_{2n-1}$ (corrected by a shift matrix). Since we are after the inverse of a submatrix, these decompositions do not immediately provide the inverse of $W$. Here the auxiliary variable $p$ comes to the rescue: from these LU and UL-decompositions one can construct an approximate inverse for $p$ large enough, and, by taking the limit $p\to \infty$,  this can be used to find an expression for $K$.}

{But before we explain this inverse, we first return to the approach with the inverse Kasteleyn matrix and show that there is an analogue of Theorem~\ref{thm:em} for the inverse Kasteleyn matrix that involves the inverse of the same matrix $W$ in~\eqref{eq:defWEM} (up to a trivial sign change in the entries).  }

\section{Relation between the Kasteleyn Approach and the LGV theorem}

In this section, we show a relation between the Kasteleyn matrix and the DR-lattice paths via the LGV theorem.  This also gives a relation for computing the inverse of the Kasteleyn matrix using the inverse of the (complete) LGV matrix.  We prove our result for the Aztec diamond and the tower Aztec diamond graph.  This result holds more generally, but we specialize to the case we are interested in this paper.  

\subsection{The relation for the Aztec diamond graph}
\label{subsec:KasttoLGVAz}

Let $W_n^{\mathrm{Az}}=(w_{ij})_{1 \leq i,j \leq n}$ with $w_{ij}$ equal to the number of weighted paths from $(0,2i-1)$ to $(2j,-1)$ for $1 \leq i,j \leq n$ on $G_n^{\mathrm{Az},\mathrm{DR}}$.  The LGV Theorem, Theorem~\ref{thm:lgv}  asserts that  the number of non-intersecting weighted lattice paths on $G_n^{\mathrm{Az},\mathrm{DR}}$ whose start points are given by $\{(0,2k-1), 0 \leq k \leq n\}$ and end at $\{(2j,-1), 0 \leq j \leq n\}$ is equal to  $\det W_n^{\mathrm{Az}}$.  Due to the one-to-one correspondence with dimer coverings, $\det W_n^{\mathrm{Az}}$ is also equal to the weighted number of dimer coverings on $G_n^{\mathrm{Az}}$.   

For a matrix $M$, denote $M[i;j,k;l]$ to be the submatrix of $M$ restricted to rows $i$ through to $j$ and columns $k$ through to $l$.

 \begin{thm}\label{thm:AztecLGV}
Let $A_n=K_n[1;n,1;n]$, $B_n=K_n[1;n,n+1;n(n+1)]$,$C_n=K_n[n+1;n(n+1),1;n]$ and $D_n=K_n[n+1;n(n+1),n+1;n(n+1)]$. 
    For $1 \leq i,j \leq n$, let
 $$
     \tilde{w}_{ij}= (A_n-B_n D_n^{-1}C_n)(i,j)
    $$
and $\tilde{W}_n^{\mathrm{Az}}=(\tilde{w}_{ij})_{1 \leq i,j \leq n}$.
Then, we have that $w_{ij}=| \tilde{w}_{ij}|$ for $1 \leq i,j \leq n$ and $\det W_n^{\mathrm{Az}}=|\det \tilde{W}_n^{\mathrm{Az}}|$.  
    We also have that 
    \begin{equation}\label{thm:eqAztecLGV}
        K_n^{-1} =\left( \begin{matrix}
            (\tilde{W}_n^{\mathrm{Az}})^{-1} & - (\tilde{W}_n^{\mathrm{Az}})^{-1}B_n D_n^{-1} \\
            -D_n^{-1}C_n (\tilde{W}_n^{\mathrm{Az}})^{-1}& D_n^{-1} + D_n^{-1} C_n (\tilde{W}_n^{\mathrm{Az}})^{-1} B_n D_n^{-1} 
        \end{matrix}\right)
     \end{equation}
\end{thm}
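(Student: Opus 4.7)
The plan is to reduce the entire statement, via a standard block Schur complement factorisation of $K_n$, to two ingredients: (i) the fact that $D_n$ is invertible with $|\det D_n|=1$, and (ii) an identification of the individual entries of $\tilde W_n^{\mathrm{Az}}$ with weighted dimer counts of small modifications of $G_n^{\mathrm{Az}}$ via Kasteleyn's theorem. Concretely, whenever $D_n$ is invertible we have the block LU factorisation
\[
K_n = \begin{pmatrix} I_n & B_n D_n^{-1} \\ 0 & I_{n^2} \end{pmatrix}\begin{pmatrix} \tilde W_n^{\mathrm{Az}} & 0 \\ 0 & D_n \end{pmatrix}\begin{pmatrix} I_n & 0 \\ D_n^{-1} C_n & I_{n^2} \end{pmatrix},
\]
from which the claimed explicit form for $K_n^{-1}$ follows by direct inversion of the three factors, and which also yields the determinant identity $|\det K_n|=|\det D_n|\,|\det\tilde W_n^{\mathrm{Az}}|$. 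Combined with Kasteleyn's theorem (identifying $|\det K_n|$ with the weighted dimer partition function of $G_n^{\mathrm{Az}}$) and the LGV theorem plus the path-dimer bijection (identifying $\det W_n^{\mathrm{Az}}$ with the same partition function), the identity $\det W_n^{\mathrm{Az}}=|\det\tilde W_n^{\mathrm{Az}}|$ is reduced to showing $|\det D_n|=1$.

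The matrix $D_n$ is precisely the Kasteleyn matrix of the subgraph $H_n\subseteq G_n^{\mathrm{Az}}$ obtained by deleting the left-column black vertices $\{(0,2i-1)\}_{i=1}^n$ and the bottom-row white vertices $\{(2j-1,0)\}_{j=1}^n$. Starting from the corner black vertex $(2n,1)$ of $H_n$, whose only surviving neighbour in $H_n$ is $(2n-1,2)$, and sweeping through $H_n$ row by row, a straightforward induction shows that in any perfect matching of $H_n$ every remaining black vertex $(2i,2j+1)$ must be matched with its top-left white neighbour $(2i-1,2j+2)$. This exhibits a unique perfect matching of $H_n$, with all dimers of weight $1$, so Kasteleyn's theorem gives $|\det D_n|=1$, and in particular $D_n$ is invertible.

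For the entrywise identity $w_{ij}=|\tilde w_{ij}|$ the plan is to apply the Schur complement idea entry by entry. For fixed $1\le i,j\le n$, let $\hat K_{ij}$ be the $(n^2+1)\times(n^2+1)$ matrix whose top-left $1\times 1$ block is the scalar $A_n(i,j)$, whose top-right and bottom-left blocks are respectively the $i$-th row of $B_n$ and the $j$-th column of $C_n$, and whose bottom-right block is $D_n$. The same block Schur complement computation then gives $\det\hat K_{ij}=\det D_n\cdot\tilde w_{ij}$. By construction, $\hat K_{ij}$ is the Kasteleyn matrix of the subgraph $G^{(i,j)}$ of $G_n^{\mathrm{Az}}$ obtained by deleting every left-column black vertex except $b_n^{\mathrm{Az}}(i)$ and every bottom-row white vertex except $w_n^{\mathrm{Az}}(j)$, so Kasteleyn gives $|\det\hat K_{ij}|$ equal to the weighted dimer count on $G^{(i,j)}$. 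The main obstacle is then a single-path refinement of the usual path-dimer correspondence: one needs a weight-preserving bijection between dimer covers of $G^{(i,j)}$ and single DR paths from $(0,2i-1)$ to $(2j,-1)$ in $G_n^{\mathrm{Az},\mathrm{DR}}$. The key point is that $b_n^{\mathrm{Az}}(i)$ and $w_n^{\mathrm{Az}}(j)$ are the only surviving boundary vertices of their respective types in $G^{(i,j)}$, so following outgoing tracked dimers from $b_n^{\mathrm{Az}}(i)$ must yield a single path terminating at $w_n^{\mathrm{Az}}(j)$, with all non-path bulk black vertices covered by their top-left dimers exactly as in $H_n$. Granting this bijection, $|\tilde w_{ij}|=|\det\hat K_{ij}|/|\det D_n|=w_{ij}$.
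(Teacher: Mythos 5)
Your proposal is correct and shares the paper's overall architecture: establish $|\det D_n|=1$ via the unique (weight-one) matching of the reduced graph, get the determinant identity and the block formula for $K_n^{-1}$ from the Schur complement factorization, and identify each entry $\tilde w_{ij}$ with the weighted count of dimer covers of the Aztec diamond with all but one left-column black and one bottom-row white vertex deleted, which in turn is $w_{ij}$ via the single-path bijection. Where you genuinely diverge is in the mechanism for the entrywise identity $w_{ij}=|\tilde w_{ij}|$. The paper expands $(B_nD_n^{-1}C_n)_{i,j}$ over the (at most four) nonzero entries of $B_n$ and $C_n$, and then explicitly computes the signs of the relevant boundary entries of $D_n^{-1}$ (showing $\mathrm{sgn}(D_n^{-1})_{n^2+1-\varepsilon_1 n,\,n^2-n+\varepsilon_2}=\mathrm{i}^{\varepsilon_1+\varepsilon_2+1}$ by an adjugate/induction argument) so as to verify that all terms combine with a common sign. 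You instead package the whole entry as a single bordered determinant, $\det\hat K_{ij}=\det D_n\cdot\tilde w_{ij}$, and invoke in one stroke the fact that a minor of a Kasteleyn matrix obtained by deleting outer-boundary vertices equals, up to one global sign, the partition function of the reduced graph. This is cleaner --- it eliminates the $\mathrm{i}^{\varepsilon_1+\varepsilon_2+1}$ bookkeeping entirely --- but it shifts the burden onto the sign-consistency of boundary minors (all matchings of $G^{(i,j)}$ contributing the same sign to $\det\hat K_{ij}$), which holds here because the deleted vertices are degree-two vertices on the outer face so the Kasteleyn condition on all bounded faces is inherited; you should say this explicitly, since it is the exact analogue of the step the paper makes explicit. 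Both your argument and the paper's leave the single-path refinement of the path--dimer correspondence at the level of a sketch; your observation that cycles in the superposition with the unique matching of $H_n$ would contradict its uniqueness is in fact a slightly more complete justification than the paper's appeal to a figure.
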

{
	
\begin{rem}
\begin{enumerate}
\item {A similar assertion for the first statement has been made for the square-grid on a cylinder; see~\cite{AGR:21}[Section 4]}.
	\item The signs for the entries in  $(\tilde{W}_n^{\mathrm{Az}})^{-1}$ can be computed explicitly. In fact, by~\cite{CY14}[Lemma 3.6], we have that $\tilde{w}_{i,j}=\mathrm{i}^{i+j-1}w_{i,j}$ for $1 \leq i,j\leq n$. We omit this computation. 
	\item {Here, $D_n$ is a triangular matrix while $B_n$ and $C_n$ are very sparse matrices.  }
\end{enumerate}
	\end{rem}
		}

\begin{proof}
    We first show that $|\det D_n|=1$. Notice that $D_n$ is the Kasteleyn matrix of removing the vertices $\{(2k-1,0):1\leq k \leq n \} \in \mathtt{W}_n^{\mathrm{Az}}$ and  $\{(0,2k-1):1\leq k \leq n \} \in \mathtt{B}_n^{\mathrm{Az}}$ and their incident edges from $G_n^{\mathrm{Az}}$, that is removing the bottom row and leftmost column of vertices and their incident edges from $G_n^{\mathrm{Az}}$.  This is tile-able and so $\det D_n$ is non-zero.  Moreover, it is easy to see that there is in fact exactly one dimer configuration on this graph; see Fig.~\ref{fig:Aztec3frozen} for an example.  This configuration is precisely all dimers of the form  $((i,j),(i-1,j+1))\in \mathtt{E}_n^{\mathrm{Az}}$ for $(i,j) \in \mathtt{B}_n^{\mathrm{Az}}\backslash \{(0,2k+1):0\leq k \leq n-1
\}$, which all have weight 1.
\begin{center}
\begin{figure}
    \includegraphics[height=5cm]{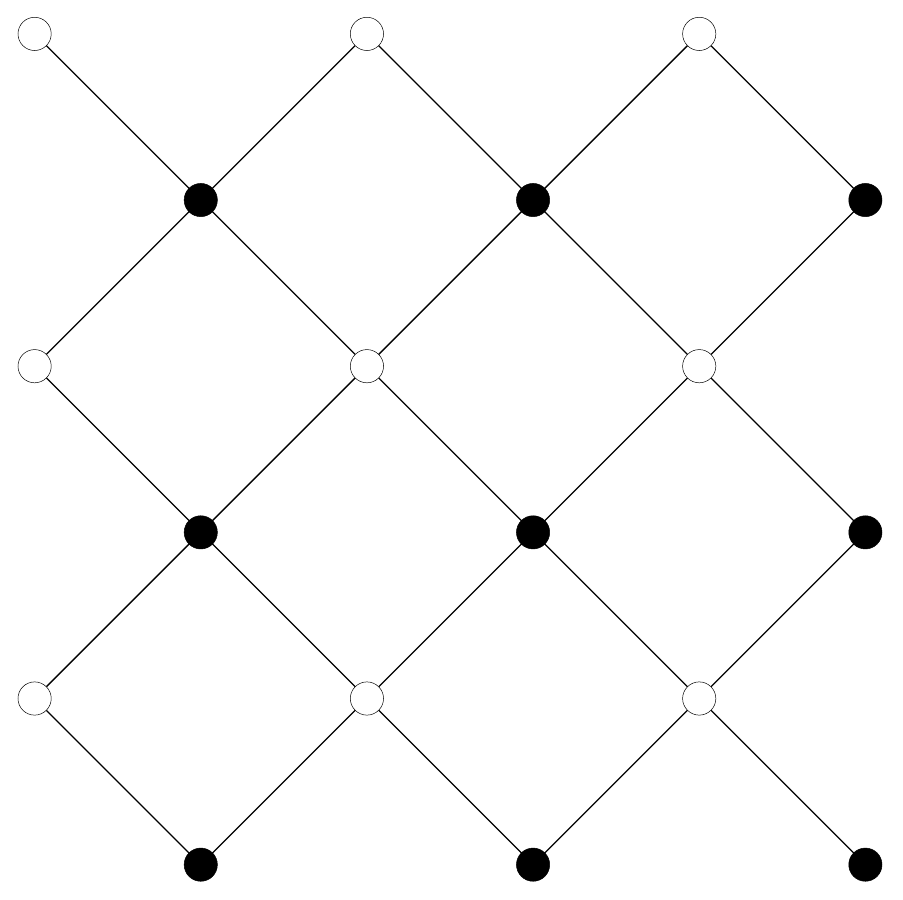}
  \includegraphics[height=5cm]{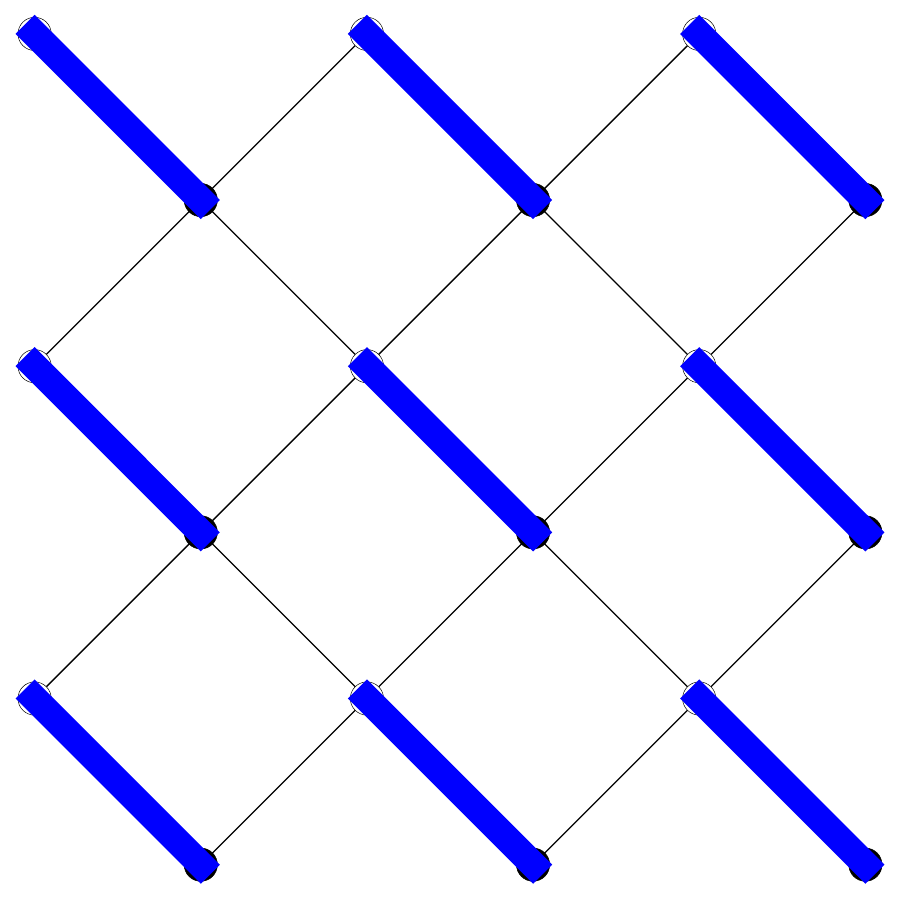}
    \caption{The left figure shows removing the leftmost column and bottom row of vertices from an Aztec diamond of size 3 and the right figure shows the single possible dimer configuration.  }
    \label{fig:Aztec3frozen}
\end{figure}
\end{center}
Kasteleyn's theorem, the formula for determinants of 2 by 2 block matrices and the evaluation of $\det D_n$ above give
\begin{equation}
\begin{split}
&\det W_n^{\mathrm{Az}}=|\det K_n|= |\det( A_n - B_n D_n^{-1} C_n) \det D_n| \\
  &= | \det (A_n -B_n D_n^{-1} C_n)|=|\det \tilde{W}_n^{\mathrm{Az}}|.
\end{split}
\end{equation} 
    Observe that $(A_n)_{i,j}=\mathbbm{I}[i=j=1] b_{0,0}$.  Next, we expand out $B_n D_n^{-1} C_n$. We have that
\begin{equation}
\begin{split}
&(B_n D_n^{-1} C_n)_{i,j}=\sum_{k,l=1}^{n^2} ( B_n)_{i,k} (D_n^{-1})_{k,l} (C_n)_{l,j}\\ 
&=\sum_{\varepsilon_1 , \varepsilon_2 \in \{0,1\} } ( B_n)_{i,n^2+1-(i-\varepsilon_1)n} ( D_n^{-1})_{n^2+1-(i-\varepsilon_1)n,n^2-n+j-\varepsilon_2}
 ( C_n)_{n^2-n+j-\varepsilon_2,j} \\
& \,\,\, \times \mathbbm{I}[i-\varepsilon_1> 0] \mathbbm{I}[j-\varepsilon_2> 0] \\
&= \sum_{\varepsilon_1=i-1}^{i} \sum_{\varepsilon_2=j-1}^{j}
 ( B_n)_{i,n^2+1-\varepsilon_1n} ( D_n^{-1})_{n^2+1-\varepsilon_1n, n^2-n+\varepsilon_2}
 ( C_n)_{n^2-n+\varepsilon_2,j} \\
& \,\,\, \times\mathbbm{I}[\varepsilon_1> 0] \mathbbm{I}[\varepsilon_2> 0]
\end{split}
\end{equation}
where the penultimate line follows from only considering the non-zero entries of $B_n$ and $C_n$, and the last line is just a rearrangement of the sum. 

The entries of $D_n^{-1}$ in the last line of the above formula are those on the boundary of the graph induced from the Kasteleyn matrix $D_n$.  Since these vertices are on the boundary, they represent, up to sign, the ratio between the number of weighted dimer coverings 
on the graph induced by $D_n$ with these vertices removed and the number of weighted dimer coverings on the graph induced by $D_n$. 
 We next show that $\mathrm{sgn}( D_n^{-1})_{n^2+1-\varepsilon_1n, n^2-n+\varepsilon_2}=\mathrm{i}^{\varepsilon_1+\varepsilon_2+1}$. To see this, observe that $\mathrm{sgn}(\det D_n)=\mathrm{i}^{n^2}$, since there are $n^2$ edges each having weight $\mathrm{i}$ and the only configuration corresponds to the identity permutation in the expansion of the determinant.  It follows that $\mathrm{sgn}( D_n^{-1})_{n^2+1-n, n^2-n+1}=\mathrm{i}^{n^2-1}/\mathrm{i}^{n^2}=\mathrm{i}^{3}$ since the numerator corresponds to the sign of the relevant entry of the adjugate matrix.  We can then sequentially increase $\varepsilon_1$ which removes a factor of  $\mathrm{i}$ from the product of entries of the Kasteleyn matrix and flips a sign in the relevant entry of the adjugate matrix; similar for $\varepsilon_2$. 

We have that for $1 \leq i,j \leq n$, 
\begin{equation}
\begin{split}
\label{eq:AnBDC}
&(A_n)_{i,j} -\sum_{\varepsilon_1=i-1}^{i} \sum_{\varepsilon_2=j-1}^{j}
 ( B_n)_{i,n^2+1-\varepsilon_1n} ( D_n^{-1})_{n^2+1-\varepsilon_1n, n^2-n+\varepsilon_2}
 ( C_n)_{n^2-n+\varepsilon_2,j}  \\
& \,\,\, \times\mathbbm{I}[\varepsilon_1> 0]\mathbbm{I}[\varepsilon_2> 0]
\end{split}
 \end{equation}
is equal to, up to sign, the weighted number of dimer coverings on $G_n^{\mathrm{Az}}\backslash (\{(0,2k-1):k\not = i \} \cap  \{(2k-1,0):k\not = j \}$.  This indeed follows because the prefactor and postfactor multiplication by entries of $( B_n)$ and $(C_n)$ respectively are in fact edge weights due to their specific entries, and the signs from each of the terms combine in such a way that the expansion of each term has the same sign.  
\begin{center}
\begin{figure}
    \includegraphics[height=5cm]{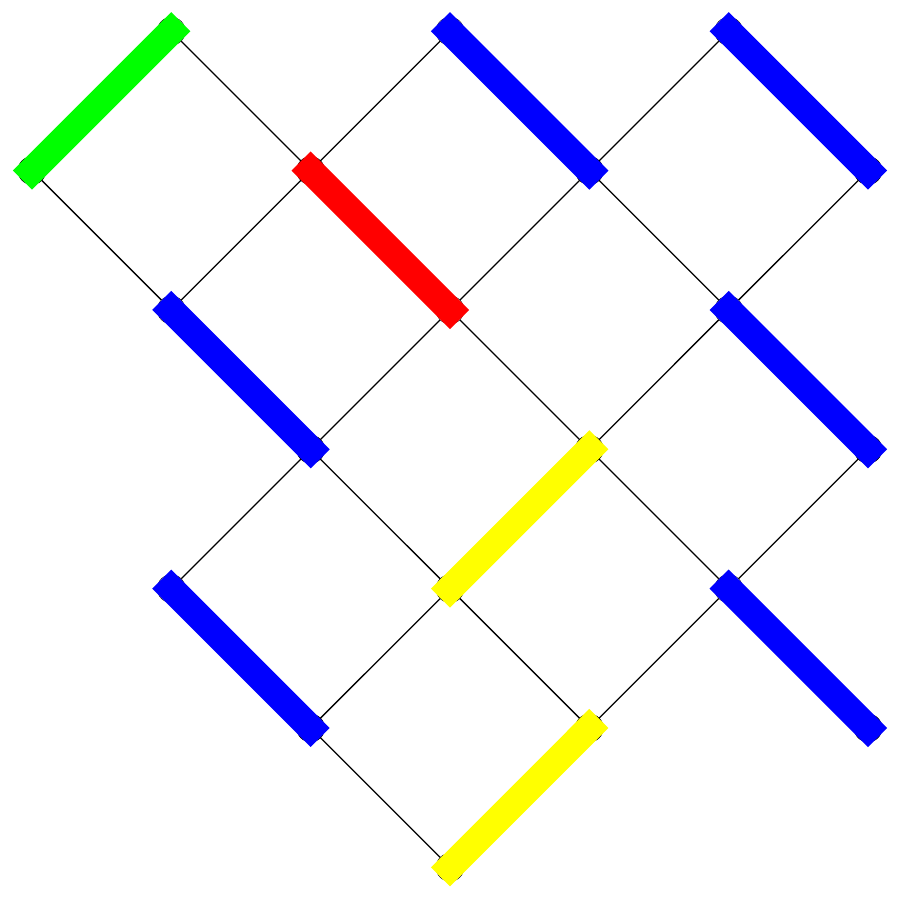}\,\,\,
  \includegraphics[height=5cm]{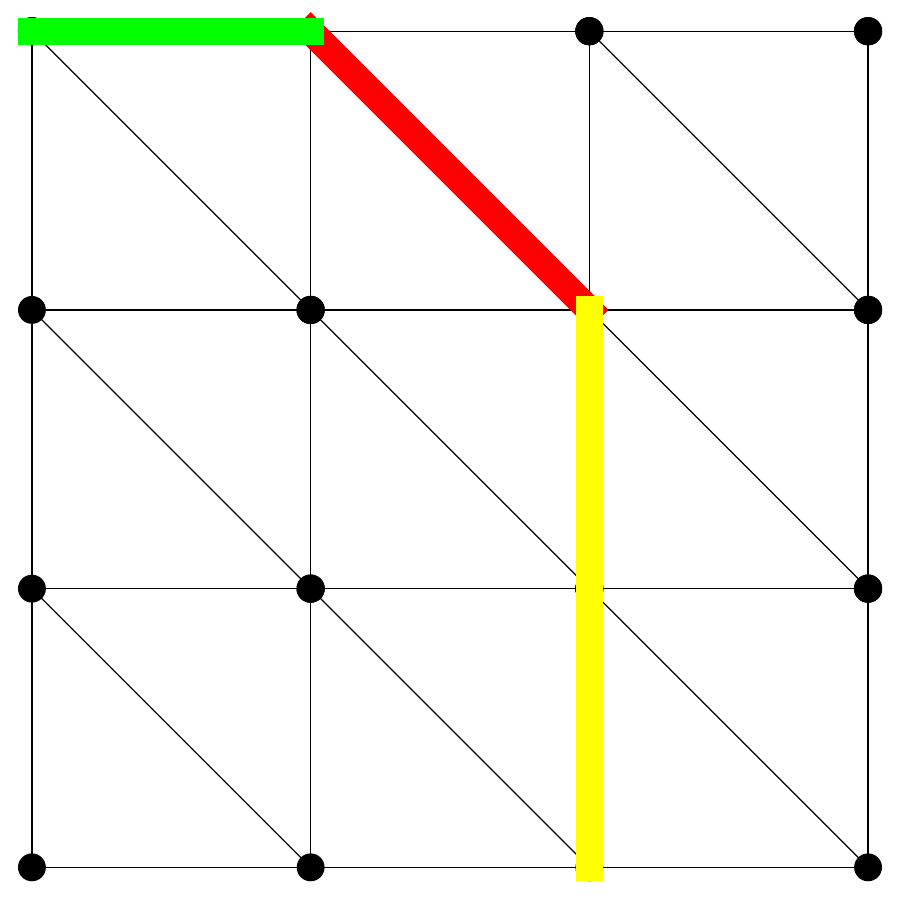}
    \caption{The left figure shows an example of a dimer covering on $G_3^{\mathrm{Az}}\backslash (\{(0,2k-1):k\not = 3 \} \cap  \{(2k-1,0):k\not = 2 \}$  }
    \label{fig:Aztec3frozenonepath}
\end{figure}
\end{center}

Next notice that the graph $G_n^{\mathrm{Az}}\backslash (\{(0,2k-1):k\not = i \} \cap  \{(2k-1,0):k\not = j \})$ induces  a single DR lattice path from $(0,2i-1)$ to $(2j,-1)$ on $G_n^{\mathrm{Az},\mathrm{DR}}$ for $1 \leq i,j\leq n$; see Fig.~\ref{fig:Aztec3frozenonepath} for an example.  Therefore, the above expression, up to sign, is also equal to the number of DR lattice paths from $(0,2i-1)$ to $(2j,-1)$ for $1 \leq i,j\leq n$.  

The final assertion is a consequence of the first assertion in the statement of the theorem and the Schur complement formula.

\end{proof}

\subsection{The relation for the tower Aztec diamond}

We next give an analogous theorem to \cref{thm:AztecLGV} for the tower Aztec diamond of size $n$ with corridor $p$.  

 \begin{thm}\label{thm:AztecTowerLGV}
Let $B_{n,p}=K_{n,p}[1;n+p,n+p+1;n(2n+p)]$,$C_{n,p}=K_{n,p}[n+p+1;n(2n+p),1;n+p]$ and $D_{n,p}=K_{n,p}[n+p+1;n(2n+p),n+p+1;n(2n+p)]$. 
    For $1 \leq i,j \leq n+p$, let
    $$
    \tilde{w}_{ij}= (-B_{n,p} D_{n,p}^{-1}C_{n,p})(i,j).
    $$
and $\tilde{W}_{n,p}^{\mathrm{Tow}}=(\tilde{w}_{ij})_{i,j=1}^{2n^2+np}$. Then, up to sign, $\tilde{W}_{n,p}^{\mathrm{Tow}}$ is equal to the LGV matrix for the tower Aztec diamond of size $n$ with corridor $p$ and 
\begin{equation}\label{thm:eqTowAzLGV0}
|\det \tilde{W}_{n,p}^{\mathrm{Tow}}(i,j)|_{1 \leq i,j \leq n+p} = |\det K_{n,p}(i,j)|_{1 \leq i,j \leq 2n^2+np}
\end{equation}

    We also have that 
    \begin{equation}\label{thm:eqTowAzLGV}
        K_{n,p}^{-1} =\left( \begin{matrix}
            (\tilde{W}_{n,p}^{\mathrm{Tow}})^{-1} & - (\tilde{W}_{n,p}^{\mathrm{Tow}})^{-1}B_{n,p} D_{n,p}^{-1} \\
            -D_{n,p}^{-1}C_{n,p} (\tilde{W}_{n,p}^{\mathrm{Tow}})^{-1}& D_{n,p}^{-1} + D_{n,p}^{-1} C_{n,p} (\tilde{W}_{n,p}^{\mathrm{Tow}})^{-1} B_{n,p} D_{n,p}^{-1} 
        \end{matrix}\right)
     \end{equation}
\end{thm}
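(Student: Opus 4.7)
The plan is to adapt the proof of Theorem~\ref{thm:AztecLGV} to the tower geometry, which requires two modifications. First, I would verify that the upper-left block $K_{n,p}[1;n+p,1;n+p]$ vanishes identically: the first $n+p$ white vertices lie on the right column at $x=2n-1$ (with $y\le 0$), while the first $n+p$ black vertices lie on the left column at $x=0$, and edges of $G_{n,p}^{\mathrm{Tow}}$ only connect vertices with neighbouring $x$-coordinates. This is why the definition of $\tilde{w}_{ij}$ contains no $A_{n,p}$ contribution, in contrast with Theorem~\ref{thm:AztecLGV}.

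Second, I would show $|\det D_{n,p}|=1$ by identifying the graph associated to $D_{n,p}$---namely $G_{n,p}^{\mathrm{Tow}}$ with the $n+p$ vertices of the left black column and the $n+p$ vertices from the lower portion of the right white column deleted, together with all incident edges---and checking that it admits a unique dimer configuration, all of whose edges carry dimer weight $1$. The uniqueness follows from a cascade: the white vertices adjacent to the removed left column lose all of their left-pointing matches, and the black vertices adjacent to the removed right-column portion lose all of their right-pointing matches, so each is forced to use its surviving diagonal (weight-$1$) edges. Propagating these forcings across the upper Aztec, the corridor, and the lower Aztec of size $n-1$ yields a single tiling. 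The sign of $\det D_{n,p}$ is then determined by counting the Kasteleyn $\mathrm{i}$-factors appearing in this tiling. Combined with the block determinant identity $\det K_{n,p}=\det(-B_{n,p}D_{n,p}^{-1}C_{n,p})\cdot\det D_{n,p}$ (valid since $A_{n,p}=0$), this immediately yields~\eqref{thm:eqTowAzLGV0}.

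The LGV identification of $|\tilde{w}_{ij}|$ with the DR-path count then follows exactly as in the Aztec case. Expand $(B_{n,p}D_{n,p}^{-1}C_{n,p})(i,j)$ using the sparsity of $B_{n,p}$ and $C_{n,p}$---each relevant row of $B_{n,p}$ and column of $C_{n,p}$ has at most two nonzero entries, supported on the vertices of $D_{n,p}$'s graph adjacent to the removed boundary---so that each summand is, up to a computable $\mathrm{i}$-power sign, the ratio of the weighted dimer count on the graph of $D_{n,p}$ with one pair of boundary vertices reinserted to $|\det D_{n,p}|=1$. Each such dimer configuration is in bijection with a single DR-path from source $i$ to sink $j$ in $G_{n,p}^{\mathrm{Tow},\mathrm{DR}}$ sitting in the otherwise frozen background, and tracking the signs as in the Aztec case gives $|\tilde{w}_{ij}|=w_{ij}$. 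Finally, the block inverse formula~\eqref{thm:eqTowAzLGV} is the Schur complement formula specialised to $A_{n,p}=0$, which is valid since $D_{n,p}$ is invertible.

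The main obstacle is the uniqueness of the frozen configuration underlying $D_{n,p}$. In the Aztec case the two deleted boundary sides meet at a corner and the cascade is manifestly one-directional, but here the deletions lie on opposite sides of the graph and span three subregions (upper Aztec, corridor, lower Aztec of size $n-1$) of different widths. Verifying that the cascades propagating from the left and from the right meet consistently across the interfaces---particularly at the corridor/lower-Aztec transition where the column structure changes---is the step requiring the most care, and is where the proof deviates most substantially from the Aztec case.
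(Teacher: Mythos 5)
Your proposal follows essentially the same route as the paper's proof: observe that the $(1,1)$ block of $K_{n,p}$ vanishes so the block determinant formula gives $\det K_{n,p}=\det(-B_{n,p}D_{n,p}^{-1}C_{n,p})\det D_{n,p}$, show $|\det D_{n,p}|=1$ via the unique weight-one frozen tiling of the graph with the two boundary vertex sets removed, expand $B_{n,p}D_{n,p}^{-1}C_{n,p}$ using sparsity to identify each entry (up to sign) with a single DR path count, and conclude with the Schur complement formula. The paper asserts the uniqueness of the frozen configuration by pointing to a figure, whereas you correctly flag it as the step needing a careful cascade argument across the three subregions, but this is a matter of detail rather than a different approach.
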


\begin{rem}\label{rem:signs}
	\begin{enumerate}
	\item The signs for the entries in  $(\tilde{W}_{n,p}^{\mathrm{Tow}})^{-1}$ can be computed explicitly. Following the computation given in~\cite{CY14}[Lemma 3.6], we have that $\mathrm{sgn}\tilde{w}_{ij}=\mathrm{i}^{i+3j+1}(-1)^n$ for $1 \leq i,j\leq n+p$. We omit this computation. 
	\item The matrix $D_n$ is a triangular matrix and so its inverse can easily be computed while $B_n$ and $C_n$ are very sparse matrices. Thus, the complicated step to finding  a formula for the asymptotic inverse of the Kasteleyn matrix as $p$ tends to infinity is to find the  asymptotic inverse of $(\tilde{W}_{n,p}^{\mathrm{Tow}})^{-1}$ as $p$ tends to infinity.  We will discuss this in Section \ref{sec:refactorizaion}.  In certain special cases,  such as doubly periodic weights, we expect using our results that the asymptotic inverse of the Kasteleyn matrix entries are given by double contour integral formulas, but we will not work this out here.
	\end{enumerate}
\end{rem}
		
\begin{center}
    \begin{figure}[t]
        \includegraphics[height=7cm]{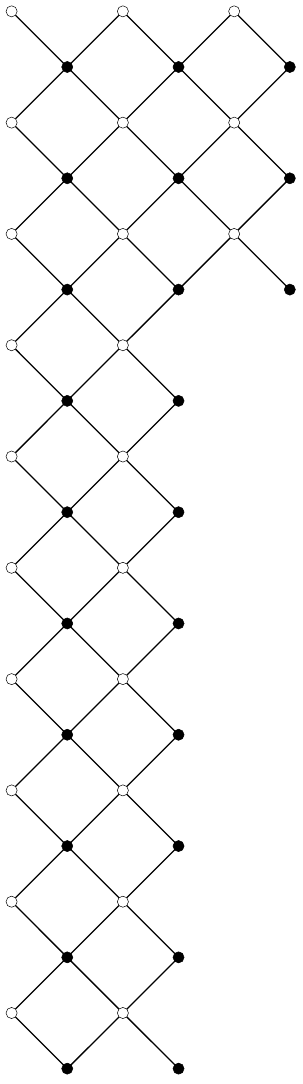}\,\,
      \includegraphics[height=7cm]{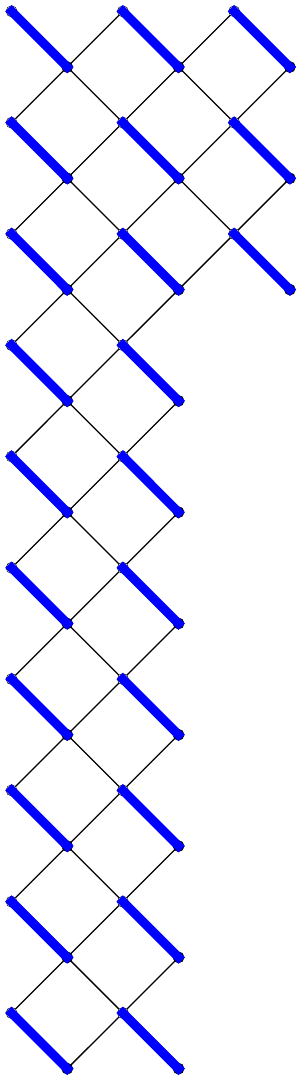}\,\,
    \includegraphics[height=7cm]{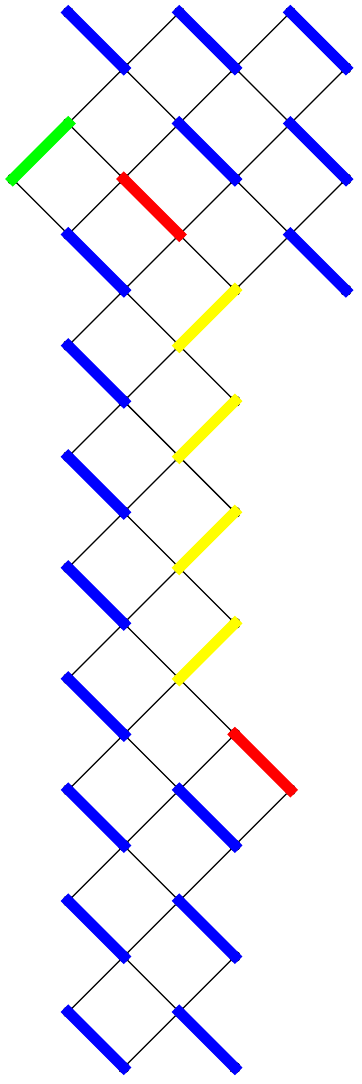}\,\,
    \includegraphics[height=7cm]{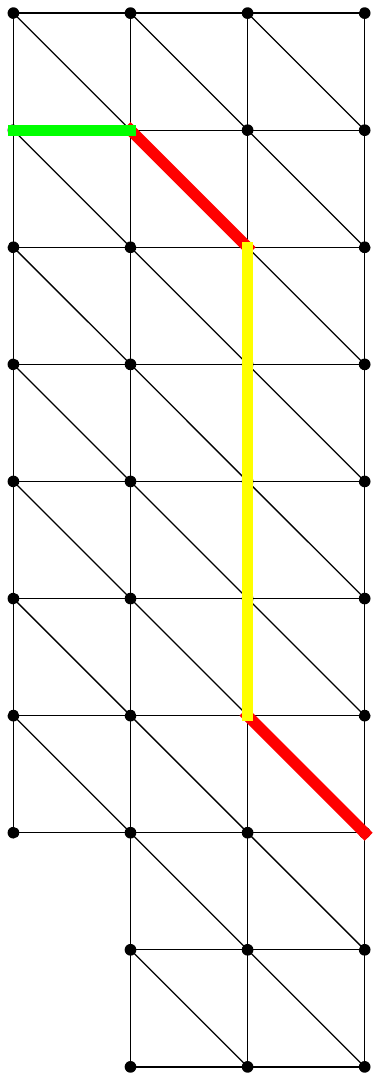}
        \caption{The two left figures show the graph induced by $D_{3,4}$ and the only possible dimer covering. The right two figures show a dimer covering on $G_{3,4}^{\mathrm{Tow}}\backslash (\{(0,2n+1-2k):k\not = 2 \} \cap  \{(2n-1,2-2k):k\not = 5 \}$ and the corresponding path $(0,3)$ to $(6,-7)$ on  $G_{3,4}^{\mathrm{Tow},\mathrm{LGV}}$.
    }
    \label{fig:AztecTowFrozen}
    \end{figure}
    \end{center}

\begin{proof}
The proof proceeds similar to the proof of \cref{thm:eqAztecLGV}. 

First observe that $D_{n,p}$ is the Kasteleyn matrix of removing the vertices $\{(2k,2n-1):-p-n+1\leq k \leq 0 \} \in \mathtt{W}_n^{\mathrm{Tow}}$ and  $\{(0,2k+1):-p\leq k \leq n-1 \} \in \mathtt{B}_{n,p}^{\mathrm{Tow}}$ and their incident edges from $G_{n,p}^{\mathrm{Tow}}$. This is tileable, has exactly one dimer configuration with all edges in the configuration having weight 1, and so $| \det D_{n,p}|=1$; see Fig.~\ref{fig:AztecTowFrozen} for an example.  
From the formula for determinants of 2 by 2 block matrices and that $D_{n,p}$ is invertible, we have 
\begin{equation}
\det K_{n,p} = \det(  - B_{n,p} D_{n,p}^{-1} C_{n,p}) \det D_{n,p} 
\end{equation} 
since $K_{n,p}[1;n+p,1;n+p]$ has all entries equal to 0.  This gives~\eqref{thm:eqTowAzLGV0}. 
 
The entries of $D_{n,p}^{-1}$ in the last line of the above formula are those on the boundary of the graph induced from the Kasteleyn matrix $D_{n,p}$.  Since these vertices are on the boundary, they represent, up to sign, the ratio between the number of weighted dimer coverings on the graph induced by $D_{n,p}$ with these vertices removed and the number of weighted dimer coverings on the graph induced by $D_{n,p}$. Similar to the proof of Theorem~\ref{thm:AztecLGV}, we can compute explicitly the sign of the boundary entries of $D_{n,p}^{-1}$ -- we omit this computation. 

We have that for $1 \leq i,j \leq n+p$, 
\begin{equation}
\begin{split}
&-\sum_{k,l=1}^{2n^2+pn-p-n}(B_{n,p})_{i,k} (D_{n,p}^{-1})_{k,l}(C_{n,p})_{l,j} \\
	&=-\sum_{\varepsilon_1=i-1}^i \sum_{\varepsilon_2=j-1}^j B_{i,n\varepsilon_1+1 (\varepsilon_1-n)\mathbbm{I}[\varepsilon_1>n]} D_{n\varepsilon_1+1 (\varepsilon_1-n)\mathbbm{I}[\varepsilon_1>n],n^2+(n-1)\varepsilon_2-\mathbbm{I}[\varepsilon_2=0]} \\ &\hspace{5mm}\times C_{n^2+(n-1)\varepsilon_2-\mathbbm{I}[\varepsilon_2=0],j}
\end{split}\label{eq:AnBDCTow}
\end{equation}
	where we have simplified by only recording the non-zero entries of $B_{n,p}$ and $D_{n,p}$.  This is equal, up to sign, to the number of weighted dimer coverings on $G_{n,p}^{\mathrm{Tow}}\backslash (\{(0,2n+1-2k):k\not = i \} \cap  \{(2n-1,2-2j):k\not = j \}$.  This indeed follows because the prefactor and postfactor multiplication by entries of $( B_{n,p})$ and $(C_{n,p})$ respectively are in fact edge weights due to their specific entries, and the signs from each of the terms combine in such a way that the expansion of each term has the same sign.

Next notice that the graph $G_{n,p}^{\mathrm{Tow}}\backslash (\{(0,2n+1-2k):k\not = i \} \cap  \{(2n-1,2-2j):k\not = j \}$ induces a single DR lattice path from $(0,2n-1-2i)$ to $(2n-1,1-2j)$ for  $1 \leq i,j\leq n+p$ on $G_{n,p}^{\mathrm{Tow},\mathrm{LGV}}$; see Fig.~\ref{fig:AztecTowFrozen} for an example.  Therefore, the above expression, up to sign, is also equal to the weighted number of DR lattice paths from $(0,2n-1-2i)$ to $(2n-1,1-2j)$.  

The final statement of the theorem follows directly from the Schur complement formula.

\end{proof}

\section{Equivalence of the domino shuffle and matrix refactorizations} \label{sec:dynamics}

\subsection{Domino shuffle for face weights}\label{subsec:shuffle}

Introduce the vertex sets
\[
\mathtt{V}_1=\{(i,j)\in \mathbb{Z}^2: i\mod 2=1, j\mod 2=0\} 
\]
and
\[
\mathtt{V}_2=\{(i,j)\in \mathbb{Z}^2: i\mod 2=0, j\mod 2=1\}.
\]
We initially set $\mathtt{W}=\mathtt{V}_1$ and $\mathtt{B}=\mathtt{V}_2$, that is the white and black vertices.

\begin{figure}[t]
    \begin{center}
  \includegraphics[height=3cm]{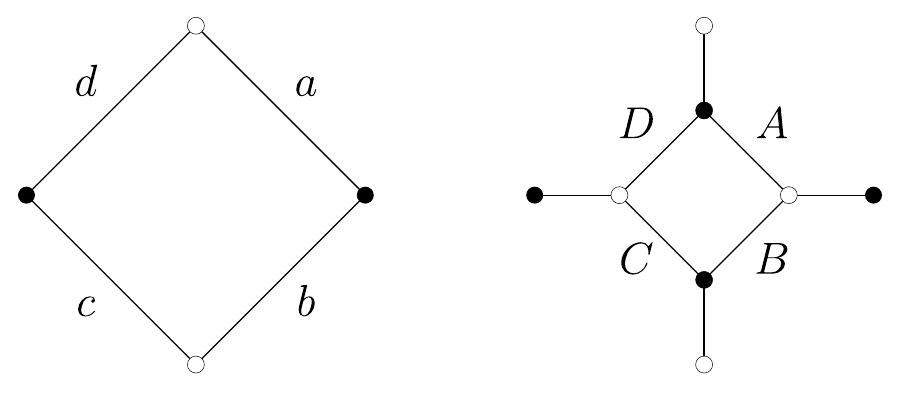}
    \caption{ The square move and its effect on the edge weights. The left figure shows a square with edge weights $a, b, c$ and $d$ while the right figure shows an application of the square move to that single face. 
	Here, we have $A=c/\Delta$, $B=d/\Delta$, $C=a/\Delta$, and $D=b/\Delta$ where
        $\Delta=(ac+bd)$.  }
\label{fig:Aztec4shufflesquare}
\end{center}
\end{figure}

We need the following two graph transformations.
\begin{enumerate}
    \item (Square Move) Suppose the edge weights around a square with vertices $(0,1),(1,0),(0,-1)$ and, $(-1,0)$ are given by $a,b,c$, and $d$ where the labelling is done clockwise around the face starting with the NE edge.  We can replace the square by a smaller square with edge weights $A,B,C$, and $D$ (with the same labelling convention) and add an edge, with edge-weight equal to 1, between each vertex of the smaller square and its original vertex.  Then, set $A=c/\Delta$, $B=d/\Delta$, $C=a/\Delta$, and $D=b/\Delta$ where
        $\Delta=(ac+bd)$. This transformation is called the \emph{square move}; see Fig. \ref{fig:Aztec4shufflesquare}.
    \item (Edge contraction) For any two-valent vertex in the graph with incident edges having weight 1, contract the two incident edges.  This is called \emph{edge contraction}.
\end{enumerate}
When we apply the above two moves to all the even (or odd) faces, we recover $\mathbb{Z}^2$ but with different face weights and the black and white vertices interchanged, that is after applying these two moves, we have that $\mathtt{W}=\mathtt{V}_2$ and $\mathtt{B}=\mathtt{V}_1$. To counter this interchanging of vertex colors, we translate the square grid by $(-1,1)$, that is, the face $(2i+1,2j+1)$ becomes the face $(2i,2j+2)$.  We call the application of the two moves above and the shift \emph{the domino shuffle}. To simplify conventions, we label the face $(2i+1,2j+1)$ to be \emph{even} faces.  Due to the shift, we only need to consider the domino shuffle applied to even faces; it is not hard to see that applying the square move twice to the same face gives the original graph and its original face weights.  

\begin{center}
\begin{figure}
\includegraphics[height=5cm]{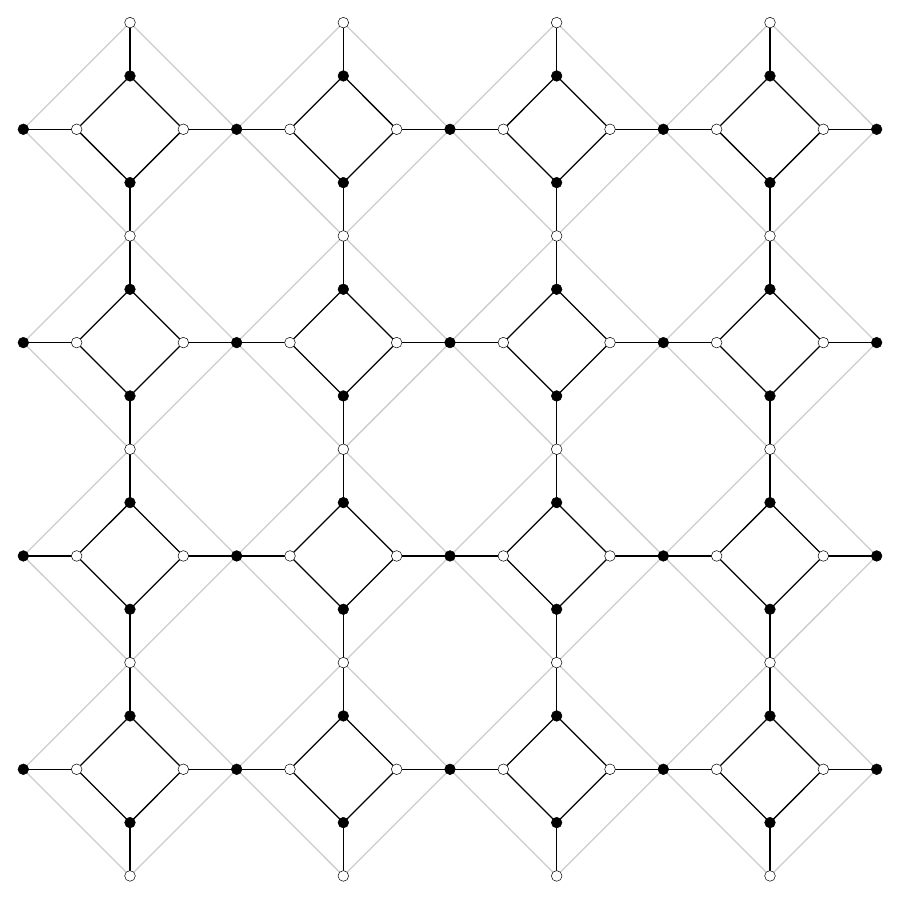}\,\,\,
    \includegraphics[height=5cm]{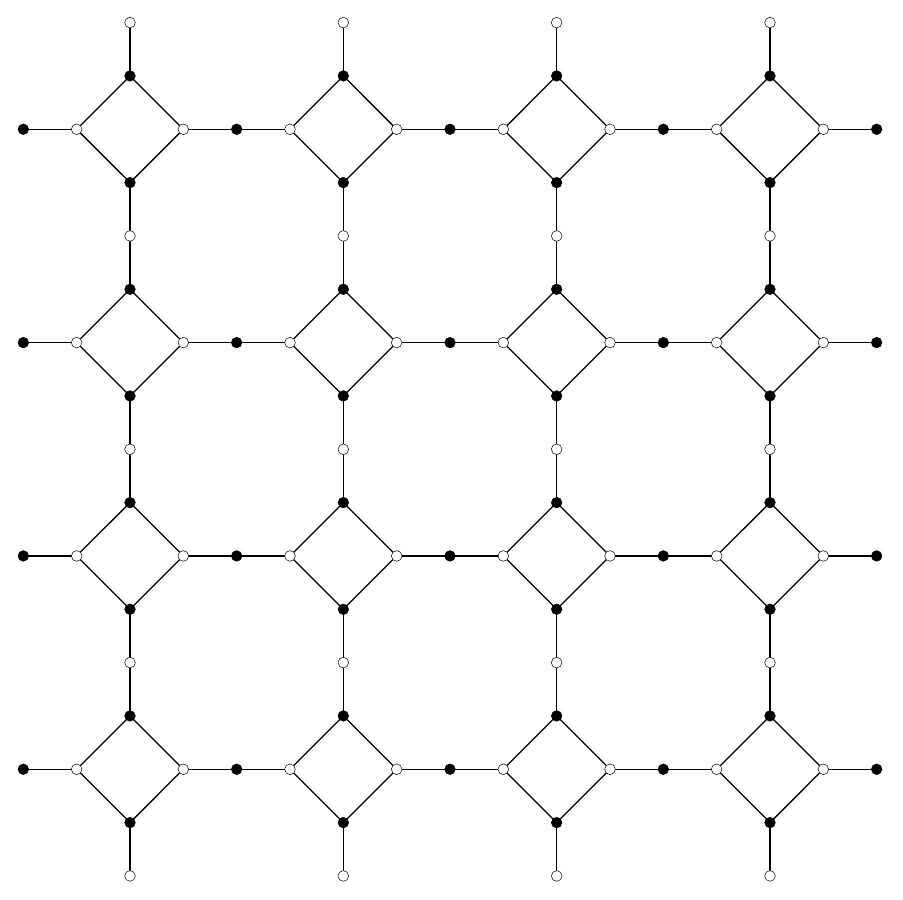}    
	\caption{An example of the graph transformation after applying the square move on all even faces of an Aztec diamond of size 4. The left figure shows where the square move applied. The figure on the right shows the actual graph.  Applying edge contraction to the two-valent vertices and removing pendant edges gives an Aztec diamond of size 3; see Section \ref{sec:inversesbyKas} for an explanation on pendant edges.  }
    \label{fig:Aztec4shuffle}
\end{figure}
    \end{center}

 \begin{center}
\begin{figure}
\includegraphics[height=3cm]{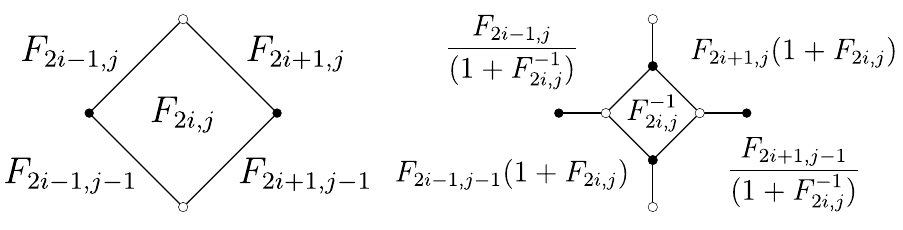}
    \caption{ The square move and its effect on the face weights.  The left figure shows the original  even face with center $(2i+1,2j+1)$, whose face weight is given by $F_{2i,j}$, and the neighboring face weights given by $ (F_{2i+1,j}, F_{2i+1,j-1}, F_{2i-1,j-1}, F_{2i-1,j})$ for the faces with centers $(2i+2,2j+2), (2i+2,2j), (2i,2j),$ and $(2i,2j+2)$ respectively. 
	}
\label{fig:Aztec4shufflesquare2}
\end{figure}
\end{center}
The next two propositions indicate the transformation of the face weights after the square move is applied to a single even face and when the domino shuffle is performed on all even faces. The first of which is well-known see e.g.~\cite{GK13}, but we include it in our presentation to keep the paper self-contained; see Fig. \ref{fig:Aztec4shufflesquare2}.  

\begin{prop}\label{prop:squaremoveonce}
    Consider the even face $(2i+1,2j+1)$ whose face weight is given by $F_{2i,j}$ and neighboring faces have face weights 
    \[
    (F_{2i+1,j}, F_{2i+1,j-1}, F_{2i-1,j-1}, F_{2i-1,j})
    \] 
    for the faces with centers $(2i+2,2j+2), (2i+2,2j), (2i,2j),$ and $(2i,2j+2)$ respectively. Applying the square move to the face $(2i+1,2j+1)$ changes the face weight at $(2i+1,2j+1)$ to $1/F_{2i,j}$ and the face weights of the neighboring faces to 
    \[\Bigg(F_{2i+1,j}(1+F_{2i,j}), \frac{F_{2i+1,j-1}}{1+F_{2i,j}^{-1}}, F_{2i-1,j-1}(1+F_{2i,j}),\frac{F_{2i-1,j}}{1+F_{2i,j}^{-1}}\Bigg)\] for the faces whose original centers are given by $(2i+2,2j+2), (2i+2,2j), (2i,2j),$ and $(2i,2j+2)$ respectively. 
\end{prop}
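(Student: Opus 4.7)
The plan is to verify Proposition~\ref{prop:squaremoveonce} by a direct local computation, unfolding the definition of face weight as an alternating clockwise product of edge weights (black-to-white edges in the numerator, white-to-black in the denominator) and applying the edge-weight conventions of Section~\ref{subsec:generalsetup} together with the square-move formulas from Fig.~\ref{fig:Aztec4shufflesquare}.

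First I would identify the boundary edges of the face centered at $(2i+1, 2j+1)$. Its four corners alternate in color, with $(2i, 2j+1)$ and $(2i+2, 2j+1)$ black and $(2i+1, 2j)$ and $(2i+1, 2j+2)$ white. By the conventions of Section~\ref{subsec:generalsetup}, the two edges incident to $(2i+2, 2j+1)$ have weight $1$, while the NW and SW edges have weights $a_{i,j}$ and $b_{i,j}$ respectively. This recovers $F_{2i,j} = a_{i,j}/b_{i,j}$ and identifies the square-move labels $(a, b, c, d) = (1, 1, b_{i,j}, a_{i,j})$, so $\Delta = a_{i,j} + b_{i,j}$.

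The key observation for the rest of the computation is that each pendant edge of weight $1$ introduced by the square move connects an old vertex to a new vertex of the opposite color, so the four inner vertices of the smaller square inherit the opposite bipartition from the original outer ones. For the new central face this swaps the roles of numerator and denominator in the alternating clockwise product, yielding $AC/BD = ac/(bd) = 1/F_{2i,j}$. For each of the four neighboring faces, exactly one edge of its boundary (the one shared with the central face) is replaced by a three-edge path consisting of a pendant of weight $1$, an inner edge of the smaller square, and another pendant of weight $1$. The two pendants are oriented oppositely with respect to the colors, so together they contribute a factor of $1$ to both the numerator and denominator; the inner edge, however, takes on the opposite role (numerator versus denominator) compared to the original shared edge, again because the inner vertices have the opposite coloring.

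Applying this to each of the four neighbors in turn yields multiplicative factors $\Delta/b_{i,j}$ for the NE and SW neighbors (where the shared edge originally lay in the numerator of the neighbor's face weight) and $a_{i,j}/\Delta$ for the NW and SE neighbors (where it lay in the denominator); using $\Delta/b_{i,j} = 1 + F_{2i,j}$ and $a_{i,j}/\Delta = 1/(1 + F_{2i,j}^{-1})$ recovers exactly the four claimed formulas. The main obstacle is not conceptual but bookkeeping: one must consistently track the colors of the inner and outer vertices, the orientation of each replacing inner edge on the neighbor's boundary, and which of $A, B, C, D$ appears in each case. Once a carefully labelled picture is drawn, this reduces to a mechanical four-case analysis.
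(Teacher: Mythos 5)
Your proposal is correct and follows essentially the same route as the paper's proof: a direct local computation that identifies the new central face weight as $1/F_{2i,j}$ via the colour swap of the inner square, and updates each neighbouring face weight by removing the old shared edge's contribution and inserting the new inner edge's contribution with the opposite numerator/denominator role, which yields exactly the factors $\Delta/b_{i,j}=1+F_{2i,j}$ and $a_{i,j}/\Delta=1/(1+F_{2i,j}^{-1})$. One harmless imprecision: the two weight-$1$ pendants on a given neighbour's boundary are in fact traversed in the \emph{same} colour direction (both land in the numerator or both in the denominator), not opposite ones, but since their weights equal $1$ this does not affect the computation.
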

\begin{proof}
    Let the edge weights around the face whose center is $(2i+1,2j+1)$ be given by $a, b,c,$ and $d$ where the labelling is done clockwise around the face starting with the NE edge.  By definition of the face weights and the fact that $(2i+1,2j+1)$ is even, we have
    \[
        F_{2i,j}= \frac{bd}{ac}.
        \]
    After applying the square move to $(2i+1,2j+1)$, the small square has the opposite parity to the original square and so the new face at $(2i+1,2j+1)$ is given by
    \[
        \frac{  \frac{a}{\Delta} \frac{c}{\Delta} }{\frac{b}{\Delta} \frac{d}{\Delta} }=\frac{ac}{bd}=\frac{1}{F_{2i,j}},
    \]
where $\Delta=ac+bd$. 
    We now need to compute the face weights around the neighboring faces to the face whose center is at $(2i,j)$.   The face weight of the face whose center is at $(2i+2,2j+2)$ now has weight
    \[
        F_{2i+1,j} \cdot \frac{1}{a} \cdot \frac{1}{\frac{c}{\Delta}}= F_{2i+1,j} \frac{\Delta}{ac}=F_{2i+1,j} \bigg(1+\frac{bd}{ac}\bigg)=F_{2i+1,j}(1+F_{2i,j}).
    \]
	To see this equation, the first term on the left side is the original face weight, the second term is removing the contribution from $a$ from $F_{2i+1,j}$ while the third term is the weight of the new edge, which is oriented from white to black around the face (clockwise).  The computation for the new face weight at the face $(2i,2j)$ which has original face weight $F_{2i-1,j-1}$ is similar.  
The face weight of the face whose center is at $(2i+2,j)$ now has weight
    \[
        F_{2i+1,j-1} \cdot b \cdot {\frac{d}{\Delta}}= F_{2i+1,j-1} \frac{bd}{\Delta}=F_{2i+1,j-1} \frac{1}{1+\frac{ac}{bd}}=F_{2i+1,j-1}\frac{1}{1+F_{2i,j}^{-1}}.
    \]
To see this equation, the first term on the left side is the original face weight, the second term is removing the contribution from $b$ from $F_{2i+1,j-1}$ while the third term is the weight of the new edge, which is oriented from black to white around the face (clockwise).  The computation for the new face weight at the face $(2i,2j+2)$ which has original face weight $F_{2i-1,j}$ is similar. 
\end{proof}

We next consider the effect of applying the shuffle to all even faces. 

\begin{prop}\label{prop:squaremoveall}
	Let the face weights of the even faces whose centers are given by $(2i+1,2j+1)$ be equal to $F_{2i,j} > 0$ for all $i,j \in \mathbb{Z}^2$ and the face weights of the odd faces whose centers are given by $(2i+2,2j+2)$ be equal to $F_{2i+1,j} > 0$ for all $i,j \in \mathbb{Z}^2$.  Applying the domino shuffle to all the even faces of the graph, then 
    \begin{enumerate}
	    \item the face weight at $(2i+2,2j+2)$ is given by $1/F_{2i+2,j}$,   
	    \item the face weight at $(2i+1,2j+1)$ is given by 
		    $$ F_{2i+1,j-1} \frac{1+F_{2i+2,j}}{1+1/F_{2i+2,j-1}}\frac{1+F_{2i,j-1}}{1+1/F_{2i,j}}$$
    \end{enumerate}
	for all $i,j \in \mathbb{Z}$. 
\end{prop}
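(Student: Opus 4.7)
The plan is to apply Proposition~\ref{prop:squaremoveonce} simultaneously at every even face and then relabel positions using the shift $(x,y)\mapsto(x-1,y+1)$. The observation that makes iteration clean is that any two even faces are non-adjacent: their centers differ by a nonzero vector in $(2\mathbb{Z})^2$, so they share no edges. Therefore, the square moves performed at different even faces modify disjoint collections of edges, and the multiplicative effects on neighboring face weights described by Proposition~\ref{prop:squaremoveonce} simply compose.

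Recall from Proposition~\ref{prop:squaremoveonce} that, when a square move is performed at an even face of weight $F$, the two neighbors on its NE and SW corners acquire a factor $(1+F)$ while those on the NW and SE corners acquire a factor $1/(1+F^{-1})$. For the odd face centered at $(2i+2,2j)$, which has pre-shuffle weight $F_{2i+1,j-1}$, I would enumerate its four adjacent even neighbors, determine from each neighbor's viewpoint where the odd face sits, and read off the corresponding multiplier:
\begin{align*}
&\text{NE neighbor } (2i+3,2j+1),\ F_{2i+2,j}:\ \text{odd is SW},\ \text{factor }1+F_{2i+2,j};\\
&\text{NW neighbor } (2i+1,2j+1),\ F_{2i,j}:\ \text{odd is SE},\ \text{factor }1/(1+F_{2i,j}^{-1});\\
&\text{SW neighbor } (2i+1,2j-1),\ F_{2i,j-1}:\ \text{odd is NE},\ \text{factor }1+F_{2i,j-1};\\
&\text{SE neighbor } (2i+3,2j-1),\ F_{2i+2,j-1}:\ \text{odd is NW},\ \text{factor }1/(1+F_{2i+2,j-1}^{-1}).
\end{align*}
Multiplying these four contributions into $F_{2i+1,j-1}$ and then applying the shift, under which $(2i+2,2j)$ is relabeled as $(2i+1,2j+1)$, yields precisely the expression in statement~(2).

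For statement~(1), the new face at $(2i+2,2j+2)$ after the shift corresponds to the pre-shift face at $(2i+3,2j+1)$, which is an even face of weight $F_{2i+2,j}$; by the first assertion of Proposition~\ref{prop:squaremoveonce}, the square move inverts its own weight, giving $1/F_{2i+2,j}$. The one real obstacle is careful bookkeeping: correctly identifying the four relative positions (NE, NW, SE, SW) and applying the shift consistently so that all indices in the final formula match those in the statement. Once these conventions are pinned down, both assertions follow directly from iterating the single-face proposition.
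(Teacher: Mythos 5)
Your proposal is correct and follows exactly the paper's (very terse) proof: apply Proposition~\ref{prop:squaremoveonce} at every even face and account for the shift by $(-1,1)$. Your added observation that distinct even faces share no edges (so the four multiplicative corrections to each odd face compose) and your explicit enumeration of the NE/NW/SW/SE multipliers are accurate and merely fill in the bookkeeping the paper leaves implicit.
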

\begin{proof}
	The proposition follows by applying \cref{prop:squaremoveonce} to all even faces and noting the shift by $(-1,1)$ in our conventions of the domino shuffle.   

\end{proof}

\subsection{Dynamics on the transitions matrices}
Now let $\mathcal{G}$ be the weighted directed graph from Section~\ref{sec:nonintersecting}, i.e. the underlying graph for the non-intersecting path model~\eqref{eq:nonintersectingpaths}. We recall that the weights are determined by the transition matrices~\eqref{eq:eventransitionmatrices} and~\eqref{eq:oddtransitionmatrices}. We will now define a dynamics on the set of transition matrices, that is equivalent to the domino shuffle for the corresponding dimer model. The dynamics will be based on a commutation relation between transitions matrices, that we will discuss first. 

It will be convenient to use the notations
\begin{equation} \label{eq:defShift}
S_{ij}=\begin{cases}
    1, & i=j+1\\
    0, & \textrm{otherwise,}
\end{cases}
\end{equation}
and
\begin{equation}\label{eq:defpsi}
{\Psi}= I+S^1+\ldots=\sum_{k=0}^\infty S^k.
\end{equation}
Note that this series converges entrywise, but not in matrix norm. For two sequences $\mathtt a=(\mathtt a_j)_{j\in \mathbb Z}, \mathtt b =(\mathtt b_j)_{j\in \mathbb Z}$ of non-negative real numbers, we define 
$$
\Phi(\mathtt a,\mathtt b)=D(\mathtt a)+D(\mathtt b)S,
$$
where $D(\mathtt a)$ is the diagonal matrix $(D(\mathtt a))_{jj}= a_j$.

Then we can rewrite~\eqref{eq:eventransitionmatrices} as
$$
M_{2i}=\Phi(\mathtt a_i , \mathtt b_i)=D(\mathtt a_i)+D(\mathtt b_i)S,
$$
with $\mathtt a_i=(a_{i,j})_{j \in \mathbb Z},$
and~\eqref{eq:oddtransitionmatrices} as
$$
M_{2i+1}=\Psi.
$$
The following lemma is the main ingredient for the dynamics.
\begin{lem} Let $\sigma (\mathtt b)=(b_{i-1})_{i \in \mathbb Z}$. Then
   $$ \Phi(\mathtt a,\mathtt b) \Psi=D(\mathtt a+ \mathtt b)\Psi \Phi(\mathtt a,\sigma(\mathtt b)) D(\mathtt a+ \mathtt b)^{-1}.$$
\end{lem}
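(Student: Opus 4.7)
The plan is to verify the identity entrywise. Both sides are lower triangular (infinite) matrices, so each entry is in fact a finite sum and no convergence issue will arise — this removes any worry about the fact that $\Psi$ is defined as a formally divergent geometric series.

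First I would rewrite the left-hand side using the single observation that $S\Psi = \Psi - I$ (which follows from $\Psi = \sum_{k\geq 0} S^k$). This gives
\[
\Phi(\mathtt a,\mathtt b)\Psi = D(\mathtt a)\Psi + D(\mathtt b)S\Psi = D(\mathtt a)\Psi + D(\mathtt b)\Psi - D(\mathtt b),
\]
and since $\Psi_{ij}=\mathbbm{1}[j\le i]$ the $(i,j)$-entry on the left is easily read off: it equals $a_i$ when $j=i$, it equals $a_i+b_i$ when $j<i$, and it vanishes when $j>i$.

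Next I would compute the right-hand side in two steps. First, using that $\Phi(\mathtt a,\sigma(\mathtt b))=D(\mathtt a)+D(\sigma(\mathtt b))S$ has $a_j$ on the diagonal and $b_{k-1}$ at position $(k,k-1)$, the product $\Psi\,\Phi(\mathtt a,\sigma(\mathtt b))$ has $(i,j)$-entry equal to $a_j\mathbbm{1}[j\le i]+b_j\mathbbm{1}[j<i]$ — the shift $\sigma$ is precisely what is needed so the coefficient in column $j$ becomes $b_j$ rather than $b_{j+1}$. Then conjugating by $D(\mathtt a+\mathtt b)$ multiplies the $(i,j)$-entry by $(a_i+b_i)/(a_j+b_j)$. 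On the diagonal this factor is $1$ and the value is $a_i$; strictly below the diagonal the factor $(a_j+b_j)$ cancels and the value becomes $a_i+b_i$. This matches the left-hand side in all three cases.

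The only mildly subtle point is keeping the index bookkeeping straight: one must check that the rôle of $\sigma$ is exactly to realign the subdiagonal entry of $\Phi(\mathtt a,\sigma(\mathtt b))$ so that, after the left multiplication by the lower-triangular all-ones matrix $\Psi$, the $b$-terms appear in column $j$ with coefficient $b_j$ (not $b_{j-1}$ or $b_{j+1}$). Once this is untangled the identity is immediate, and it is this bookkeeping — rather than any nontrivial algebraic manipulation — that constitutes the main (and only) real step of the proof. No periodicity or integrability of $\mathtt a,\mathtt b$ enters, so the lemma holds for arbitrary nonnegative sequences, which will be important for the dynamical interpretation that follows.
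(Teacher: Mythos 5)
Your proof is correct and amounts to the same elementary computation as the paper's: the paper manipulates the matrix identities symbolically (using $S\Psi=\Psi-I$ and $SD(\mathtt b)=D(\sigma(\mathtt b))S$), while you read off the $(i,j)$-entries of both sides and check they agree ($a_i$ on the diagonal, $a_i+b_i$ strictly below, $0$ above). Your remark that all products are entrywise finite sums is a valid and slightly more careful justification than the paper's passing comment that the series for $\Psi$ converges entrywise but not in norm.
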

\begin{proof}
    First we write 
    \begin{multline}
        \Phi(\mathtt a,\mathtt b) \Psi=(D(\mathtt a)+D(\mathtt b)S)(I+S+\ldots )\\=D(\mathtt a )+D(\mathtt a+\mathtt b) \left(S+S^2+\ldots\right),
    \end{multline}
and then
$$
\Phi(\mathtt a,\mathtt b) \Psi=D(\mathtt a+\mathtt b) \left(D(\mathtt a)+\left(S+S^2+\ldots\right)D(\mathtt a+\mathtt b) \right)D(\mathtt a+\mathtt b)^{-1},
$$
which can then be turned into
\begin{multline}
\Phi(\mathtt a,\mathtt b) \Psi=D(\mathtt a+\mathtt b) \left(I+S+S^2+\ldots\right)(D(\mathtt a)+ S D(\mathtt b))D(\mathtt a+\mathtt b)^{-1}.
\end{multline}
Then using
\begin{multline}
D(\mathtt a) +S D(\mathtt b)=D(\mathtt a)+D(\sigma (\mathtt b))S,
\end{multline}
we  find 
\begin{multline}
    \Phi(\mathtt a,\mathtt b) \Psi
        =D(\mathtt a+\mathtt b) \left(I+S+S^2+\ldots\right)(D(\mathtt a)+  D(\sigma (\mathtt b))S)D(\mathtt a+\mathtt b)^{-1}\\
        =D(\mathtt a+ \mathtt b)\Psi \Phi(\mathtt a,\sigma(\mathtt b)) D(\mathtt a+ \mathtt b)^{-1},
\end{multline}
and we have proved the statement.
\end{proof}
Given transition matrices $M_i$ on the graph $\mathcal G$, it follows from the above lemma that 
\begin{equation}  \label{eq:baxter}
    M_{2i} M_{2i+1}=X_{i} \tilde M_{2i+1} \tilde M_{2i}X_{i}^{-1},\qquad i \in  \mathbb Z,
\end{equation}
where $\tilde M_{2i+1}=M_{2i+1}=\Psi$, $X_i=D(\mathtt a_i +\mathtt b_i)$, and $\tilde M_{2i}= \Phi(\mathtt a_i,\sigma(\mathtt b_i))$
We define a new weighting $\{\hat M_i\}_{i \in \mathbb Z}$ on the graph $\mathcal G$ such that the transition matrices are the following:
$$
\hat M_{2i}=\tilde M_{2i} X_{i}^{-1} X_{i+1},  
$$
and $\hat M_{2i+1}=\Psi$. Then~\eqref{eq:baxter} can be written as
\begin{equation}  \label{eq:baxter11}
    M_{2i} M_{2i+1}=X_{i} \hat M_{2i+1} \hat M_{2i}X_{i+1}^{-1},\qquad i \in  \mathbb Z.
\end{equation}
The parameters $(\hat{\mathtt a}_i)_{i \in \mathbb Z}$ and $(\hat{\mathtt b}_i)_{i \in \mathbb Z}$ for $\hat M_{2i}$ can be obtained from the parameters $(\mathtt a_i)_{i \in \mathbb Z}$ and  $(\mathtt b_i)_{i \in \mathbb Z}$ for $M_{2i}$, using the maps
\begin{equation}\label{eq:hata}
    \hat { a}_{i,j}=a_{i,j} \frac{ a_{i+1,j}+ b_{i+1,j}}{ a_{i,j}+ b_{i,j}},
\end{equation}
and 
\begin{equation} \label{eq:hatb}
    \hat { b}_{i,j}=  b_{i,j-1} \frac{ a_{i+1,j-1}+ b_{i+1,j-1}}{ a_{i,j-1}+ b_{i,j-1}}.
\end{equation}
The map 
$$\{M_i\}_{i}\mapsto \{\hat M_i\}_{i}$$
defines a discrete dynamical system that we will be interested in. In the following theorem we show how the face weights change under this map.

\begin{thm} \label{thm:dynamics1}
Under the map $\{M_i\}_{i\in \mathbb{Z}}\mapsto \{\hat M_i\}_{i\in \mathbb{Z}}$ the face weights change as:
$$
    \hat F_{2i+1,j}=1/F_{2i+2,j},
$$
$$
    \hat F_{2i,j}= F_{2i+1,j-1}  \frac{1+F_{2i+2,j}}{1+1/F_{2i+2,j-1}}\frac{1+F_{2i,j-1}}{1+1/F_{2i,j}}.
$$
\end{thm}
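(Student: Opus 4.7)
The plan is a direct algebraic verification. The face weights of the transformed graph $\hat{\mathcal G}$ are determined by~\eqref{eq:evenfaceweightpaths} and~\eqref{eq:oddfaceweightpaths} applied to the new parameters $\hat{\mathtt a}_i, \hat{\mathtt b}_i$, so that $\hat F_{2i,j}=\hat a_{i,j}/\hat b_{i,j}$ and $\hat F_{2i+1,j}=\hat b_{i+1,j+1}/\hat a_{i+1,j}$. The entire proof consists of substituting the explicit formulas~\eqref{eq:hata}, \eqref{eq:hatb} for $\hat a_{i,j}$ and $\hat b_{i,j}$, simplifying, and matching the result against the claimed expressions. No structural lemma is needed beyond the commutation relation~\eqref{eq:baxter11} that was used to define the map.

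First I would treat the odd-face case, which is essentially a one-line cancellation. Shifting indices in~\eqref{eq:hatb} gives $\hat b_{i+1,j+1}=b_{i+1,j}\,\frac{a_{i+2,j}+b_{i+2,j}}{a_{i+1,j}+b_{i+1,j}}$, while~\eqref{eq:hata} with $i\mapsto i+1$ gives $\hat a_{i+1,j}=a_{i+1,j}\,\frac{a_{i+2,j}+b_{i+2,j}}{a_{i+1,j}+b_{i+1,j}}$. The common ratio drops out and leaves $\hat F_{2i+1,j}=b_{i+1,j}/a_{i+1,j}=1/F_{2i+2,j}$, as required.

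For the even-face case I would compute $\hat F_{2i,j}=\hat a_{i,j}/\hat b_{i,j}$ directly from~\eqref{eq:hata}–\eqref{eq:hatb}, obtaining
\[
\hat F_{2i,j}=\frac{a_{i,j}}{b_{i,j-1}}\cdot\frac{a_{i+1,j}+b_{i+1,j}}{a_{i,j}+b_{i,j}}\cdot\frac{a_{i,j-1}+b_{i,j-1}}{a_{i+1,j-1}+b_{i+1,j-1}}.
\]
Then I would expand the claimed right-hand side in the $a,b$ variables using $F_{2i+1,j-1}=b_{i+1,j}/a_{i+1,j-1}$, $1+F_{2i+2,j}=(a_{i+1,j}+b_{i+1,j})/b_{i+1,j}$, $1+1/F_{2i+2,j-1}=(a_{i+1,j-1}+b_{i+1,j-1})/a_{i+1,j-1}$, $1+F_{2i,j-1}=(a_{i,j-1}+b_{i,j-1})/b_{i,j-1}$, and $1+1/F_{2i,j}=(a_{i,j}+b_{i,j})/a_{i,j}$. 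After cancelling the factors of $b_{i+1,j}$ and $a_{i+1,j-1}$ between the first two fractions, the two expressions coincide.

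I do not expect a real obstacle: the statement is purely combinatorial bookkeeping, and the agreement of the result with Proposition~\ref{prop:squaremoveall} is automatic once both identities are checked. The only thing worth remarking in the writeup is that this identification with the domino-shuffle rule is exactly the content that makes the dynamics $\{M_i\}\mapsto\{\hat M_i\}$ the path-model analogue of the square move on all even faces, which is the reason to package the computation as a theorem rather than a remark.
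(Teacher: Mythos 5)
Your proposal is correct and follows essentially the same route as the paper: substitute~\eqref{eq:hata} and~\eqref{eq:hatb} into the face-weight definitions~\eqref{eq:evenfaceweightpaths}--\eqref{eq:oddfaceweightpaths} and simplify (the paper massages the left-hand side into the claimed form by inserting the factor $b_{i+1,j}/a_{i+1,j-1}$, whereas you expand the right-hand side and match, which is the same computation read in the opposite direction). All of your intermediate identities check out, including the odd-face cancellation giving $b_{i+1,j}/a_{i+1,j}=1/F_{2i+2,j}$.
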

\begin{proof}[Proof of Theorem~\ref{thm:dynamics1}]
Substituting~\eqref{eq:hata} and~\eqref{eq:hatb} into the face weights~\eqref{eq:evenfaceweightpaths}  gives
    \begin{multline*}
    \hat F_{2i,j}= \frac{\hat { a}_{i,j}}{\hat { b}_{i,j}}=\frac{ a_{i,j}}{ b_{i,j-1}} \frac{ a_{i+1,j}+ b_{i+1,j}}{ a_{i,j}+ b_{i,j}} \frac{ a_{i,j-1}+ b_{i,j-1}}{ a_{i+1,j-1}+ b_{i+1,j-1}}\\
    = \frac{ a_{i,j}}{ b_{i,j-1}} \frac{ a_{i+1,j}+ b_{i+1,j}}{ a_{i+1,j-1}+ b_{i+1,j-1}} \frac{ a_{i,j-1}+ b_{i,j-1}}{ a_{i,j}+ b_{i,j}}.
    \end{multline*}
This can be written as 
$$
    \hat F_{2i,j}=\frac{ a_{i+1,j}+ b_{i+1,j}}{ a_{i+1,j-1}+ b_{i+1,j-1}} \frac{\frac{ a_{i,j-1}}{ b_{i,j-1}}+1}{1+\frac{ b_{i,j}}{ a_{i,j}}}= \frac{ a_{i+1,j}+ b_{i+1,j}}{ a_{i+1,j-1}+ b_{i+1,j-1}}\frac{1+F_{2i,j-1}}{1+1/F_{2i,j}}.
$$
By putting a factor  $\frac{ b_{i+1,j}}{ a_{i+1,j-1}}$ on front we get
\begin{multline*}
\hat F_{2i,j}=\frac{ b_{i+1,j}}{ a_{i+1,j-1}}\frac{\frac{ a_{i+1,j}}{ b_{i+1,j}}+1}{1+\frac{ b_{i+1,j-1}}{ a_{i+1,j-1}}}\frac{1+F_{2i,j-1}}{1+1/F_{2i,j}}\\
=F_{2i+1,j-1}  \frac{1+F_{2i+2,j}}{1+1/F_{2i+2,j-1}}\frac{1+F_{2i,j-1}}{1+1/F_{2i,j}}.
\end{multline*}
This proves the statement for the even face weights.

For the odd face weights, we substitute~\eqref{eq:hata} and~\eqref{eq:hatb} into~\eqref{eq:oddfaceweightpaths}  giving
$$
\hat F_{2i+1,j}=\frac{\hat { b}_{i+1,j+1}}{\hat {  a}_{i+1,j}}=\frac{ b_{i,j}}{ a_{i,j}}=\frac{1}{F_{2i+2,j}},
$$
and this finishes the proof.
\end{proof}
We have the following corollary. 
\begin{cor}\label{cor:shufflematrixsame}
The evolution of the face weights under domino shuffle and the map $\{M_i\}_{i\in \mathbb{Z}}\mapsto \{\hat M_i\}_{i\in \mathbb{Z}}$ are the same. 
\end{cor}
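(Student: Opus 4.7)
The plan is to derive the corollary by a direct side-by-side comparison of the two transformation rules, using the fact that the face weight parameterization is identical on both the dimer graph and the non-intersecting-path graph $\mathcal{G}$.

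First, I would observe that by the conventions fixed in Section~\ref{subsec:generalsetup} and by~\eqref{eq:evenfaceweightpaths}--\eqref{eq:oddfaceweightpaths}, the face weights on the two graphs are parameterized by \emph{exactly} the same formulas in terms of the underlying edge weights $a_{i,j}$ and $b_{i,j}$: the even face has weight $F_{2i,j}=a_{i,j}/b_{i,j}$ and the odd face has weight $F_{2i+1,j}=b_{i+1,j+1}/a_{i+1,j}$. Thus, once the evolution of the $(a_{i,j}, b_{i,j})$ (or equivalently, of the collection $\{F_{2i,j}, F_{2i+1,j}\}$) is specified, the face weights on both graphs transform identically. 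So the corollary reduces to checking that Proposition~\ref{prop:squaremoveall} and Theorem~\ref{thm:dynamics1} produce the same output formulas.

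Second, I would perform the literal comparison. Proposition~\ref{prop:squaremoveall} says that under domino shuffle the new odd-face weight (at position $(2i+2,2j+2)$) equals $1/F_{2i+2,j}$, while Theorem~\ref{thm:dynamics1} gives $\hat F_{2i+1,j} = 1/F_{2i+2,j}$; these agree. Similarly, the new even-face weight is
\[
F_{2i+1,j-1}\,\frac{1+F_{2i+2,j}}{1+1/F_{2i+2,j-1}}\,\frac{1+F_{2i,j-1}}{1+1/F_{2i,j}}
\]
in both formulas. Hence the two dynamical systems coincide entrywise at the level of face weights.

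There is no substantive obstacle in this step; the only thing to check with some care is that the indexing convention $F_{2i,j}\leftrightarrow$ even face at center $(2i+1,2j+1)$ in the dimer graph matches the trapezoidal face convention of Section~\ref{sec:nonintersecting}, and likewise for odd faces. Once this dictionary is acknowledged, the corollary is immediate from the two previously established propositions, and the proof amounts to citing Proposition~\ref{prop:squaremoveall} alongside Theorem~\ref{thm:dynamics1}.
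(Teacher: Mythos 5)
Your proposal is correct and matches the paper's own proof, which likewise observes that the corollary is immediate upon comparing the face-weight evolution formulas in Proposition~\ref{prop:squaremoveall} and Theorem~\ref{thm:dynamics1}. The extra care you take with the indexing dictionary between the two graphs is a reasonable elaboration but does not change the argument.
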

\begin{proof}
This is immediate from comparing the evolution of the face weights in Proposition~\ref{prop:squaremoveall} and Theorem~\ref{thm:dynamics1}.
\end{proof}

It is also possible to define a reverse flow, which we will discuss now. 

Given transition matrices $M_i$ on the graph $\mathcal G$, it follows from the above lemma that 
$$
 M_{2i-1} M_{2i} = Y_i^{-1} \dbtilde M_{2i} \dbtilde M_{2i-1} Y_i\
$$
where $M_{2i-1}=\dbtilde  M_{2i-1}=\Psi$, $Y_i=D(\mathtt a+\sigma^{-1}(\mathtt b_i))$, and $\dbtilde M_{2i}= \Phi(\mathtt a_i,\sigma^{-1} (\mathtt b_i))$. We can define now a new weighting $\{\check M_{i}\}_i$ by setting  
$$
 \check M_{2i}= Y_{i-1} Y_i^{-1}M_{2i},
$$
and $\check M_{2i+1}=\Psi$. With this definition we can rewrite~\eqref{eq:baxter} as
\begin{equation}
    \label{eq:baxter22}
    M_{2i-1} M_{2i} = Y_{i-1}^{-1} \check M_{2i} \check M_{2i-1} Y_i\
\end{equation}
 Now the parameters  $(\check{\mathtt a}_i)_{ i\in \mathbb Z }$ and $(\check{\mathtt b}_i)_{ i\in \mathbb Z }$ for $\check M_i$ can be obtained from the parameters $(\mathtt a_i)_{ i\in \mathbb Z }$ and $(\mathtt b_i)_{ i\in \mathbb Z }$ as follows:

\begin{equation} \label{eq:defchecka}
    \check{ a}_{i,j}=  a_{i,j} \frac{ a_{i-1,j}+ b_{i-1,j+1}}{ a_{i,j}+ b_{i,j+1}},
\end{equation}
and 
\begin{equation}\label{eq:defcheckb}
    \check{ b}_{i,j}=  b_{i,j+1} \frac{ a_{i-1,j}+ b_{i-1,j+1}}{ a_{i,j}+ b_{i,j+1}}.
\end{equation}
The following theorem explains how the face weights change under the reverse dynamics  $\{M_i\}_{i \in \mathbb Z}\mapsto \{\check M_i\}_{i \in \mathbb Z}$.
\begin{thm}
    Under the map $\{M_i\}_{i \in \mathbb Z}\mapsto \{\check M_i\}_{i \in \mathbb Z}$ the face weights change as:
    \begin{align}
        \check F_{2i,j}&=\frac{1}{F_{2i-1,j}},\\
        \check F_{2i+1,j}
        &=F_{2i,j+1}\frac{1+F_{2i-1,j+1}}{1+1/F_{2i+1,j+1}} \frac{1+F_{2i+1,j}}{1+1/F_{2i-1,j}},
    \end{align}
\end{thm}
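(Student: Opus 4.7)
The plan is to mirror the proof of Theorem~\ref{thm:dynamics1}, but now substituting the reverse-map formulas~\eqref{eq:defchecka} and~\eqref{eq:defcheckb} into the face-weight definitions~\eqref{eq:evenfaceweightpaths} and~\eqref{eq:oddfaceweightpaths}. Both identities are purely algebraic once we write the $\check{a}$'s and $\check{b}$'s out; the only content is careful index tracking, and no new commutation identity is needed beyond the one already used for the forward map.

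For the even face weight, observe that the prefactor $\frac{a_{i-1,j}+b_{i-1,j+1}}{a_{i,j}+b_{i,j+1}}$ appears identically in both $\check{a}_{i,j}$ and $\check{b}_{i,j}$. Therefore
\[
\check F_{2i,j}=\frac{\check a_{i,j}}{\check b_{i,j}}=\frac{a_{i,j}}{b_{i,j+1}},
\]
and since $F_{2i-1,j}=\frac{b_{i,j+1}}{a_{i,j}}$ by~\eqref{eq:oddfaceweightpaths} evaluated with the correct shifts, we immediately obtain $\check F_{2i,j}=1/F_{2i-1,j}$.

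For the odd face weight, I would compute
\[
\check F_{2i+1,j}=\frac{\check b_{i+1,j+1}}{\check a_{i+1,j}}
=\frac{b_{i+1,j+2}}{a_{i+1,j}}\cdot\frac{a_{i,j+1}+b_{i,j+2}}{a_{i+1,j+1}+b_{i+1,j+2}}\cdot\frac{a_{i+1,j}+b_{i+1,j+1}}{a_{i,j}+b_{i,j+1}},
\]
by substituting~\eqref{eq:defchecka} and~\eqref{eq:defcheckb} directly. The prefactor $b_{i+1,j+2}/a_{i+1,j}$ is not yet a single face weight, so I would multiply and divide by $b_{i,j+1}$ to produce the factor $F_{2i,j+1}=a_{i,j+1}/b_{i,j+1}$ explicitly. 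Each of the two remaining ratios is then rewritten as $(1+F)/(1+1/F)$: dividing numerator and denominator by $a_{i,j+1}$ and $b_{i+1,j+2}$ respectively turns the middle factor into $(1+F_{2i-1,j+1})/(1+1/F_{2i+1,j+1})$, and dividing numerator and denominator by $a_{i+1,j}$ and $b_{i,j+1}$ turns the last factor into $(1+F_{2i+1,j})/(1+1/F_{2i-1,j})$.

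The only obstacle is a strictly bookkeeping one: the definitions~\eqref{eq:evenfaceweightpaths}--\eqref{eq:oddfaceweightpaths} use several shifted indices, and the reverse map mixes shifts in the opposite direction from the forward map (note $\sigma^{-1}(\mathtt b_i)$ rather than $\sigma(\mathtt b_i)$), so I would keep a side table identifying each $a_{*,*}+b_{*,*}$ combination that appears with the face-weight ratio it produces before assembling the final formula. No deeper structural argument is required, and in particular Corollary~\ref{cor:shufflematrixsame} is not invoked: the reverse dynamics is simply the inverse of the forward dynamics on face weights, and it could alternatively be deduced by inverting the map of Theorem~\ref{thm:dynamics1}, which provides a useful consistency check on the algebra above.
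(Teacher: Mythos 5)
Your proposal is correct and follows essentially the same route as the paper's own proof: substitute~\eqref{eq:defchecka} and~\eqref{eq:defcheckb} into~\eqref{eq:evenfaceweightpaths} and~\eqref{eq:oddfaceweightpaths}, cancel the common prefactor for the even faces, and for the odd faces extract the factor $F_{2i,j+1}=a_{i,j+1}/b_{i,j+1}$ and rewrite the two remaining ratios as $(1+F)/(1+1/F)$. Your starting expression $\check F_{2i+1,j}=\check b_{i+1,j+1}/\check a_{i+1,j}$ is in fact the correct reading of~\eqref{eq:oddfaceweightpaths} (the paper's displayed ratio contains a small index typo, though its subsequent computation agrees with yours), and the algebra checks out.
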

\begin{proof}
We start with $\check F_{2i,j}$. By~\eqref{eq:evenfaceweightpaths} and using~\eqref{eq:defchecka} and~\eqref{eq:defcheckb} we find
\begin{equation}
    \check F_{2i,j}= \frac{\check{ a}_{i,j}}{\check{ b}_{i,j}}
    =\frac{{ a}_{i,j}}{{ b}_{i,j+1}}=\frac{1}{F_{2i-1,j}}.
\end{equation}
and we arrive at the statement for $\check F_{2i,j}$ Then, by~\eqref{eq:oddfaceweightpaths} and using~\eqref{eq:defchecka} and~\eqref{eq:defcheckb} we find
\begin{multline}
    \check F_{2i+1,j}=\frac{ \check { b}_{i,j+1}}{\check{ a}_{i+1,j}}
    =\frac{{b}_{i+1,j+2}}{ a_{i+1,j}} \frac{ a_{i,j+1}+  b_{i,j+2}}{ a_{i+1,j+1}+  b_{i+1,j+2}} \frac{ a_{i+1,j}+  b_{i+1,j+1}}{ a_{i,j}+  b_{i,j+1}}
\end{multline}
This can be rewritten using~\eqref{eq:oddfaceweightpaths} to
\begin{multline}
    \check F_{2i+1,j}
    = \frac{ a_{i,j+1}+  b_{i,j+2}}{1+1/F_{2i+1,j+1}} \frac{1+F_{2i+1,j}}{ a_{i,j}+  b_{i,j+1}}
    =\frac{ a_{i,j+1}+  b_{i,j+2}}{1+1/F_{2i+1,j+1}} \frac{1+F_{2i+1,j}}{ a_{i,j}+  b_{i,j+1}}.
\end{multline}
Then we can put a factor $F_{2i,j+1}=\frac{ a_{i,j+1}}{ b_{i,j+1}}$ in front and obtain the statement for $\check F_{2i+1,j}$.
\end{proof}

\section{Inverse from matrix refactorization} \label{sec:refactorizaion}

In this section we return to the Eynard-Mehta Theorem~\ref{thm:em} for the non-intersecting paths,  and we show how to compute $W^{-1}$ from the matrix refactorization~\eqref{eq:baxter11} and its reverse~\eqref{eq:baxter22}. 
We recall~\eqref{eq:defWEM}
$$
 W_{r,s}=V_{n-r,-s}, \qquad r,s=1,\ldots,n+p,
$$
where  the doubly infinite matrix $(V_{i,j})_{i,j=-\infty}^\infty$ is defined by
\begin{equation} \label{eq:defV}
    V=M_0 \Psi M_2 \Psi M_4  \Psi \cdots  M_{2n-2} \Psi.
\end{equation}
Now $W$ is a submatrix of $V$, but it is not a principal submatrix. It will be convenient to write 
$$
 W_{r,s}=(G)_{n-r,n-s}, \qquad r,s=1,\ldots,n+p,
$$
where $G=V S^{-n}$ and $S$ is the shift matrix~\eqref{eq:defShift}.

Although we will focus on the non-intersecting paths, the analysis in this section can also be applied to find the inverse of the Kasteleyn matrix through Theorem~\ref{thm:AztecTowerLGV}.
 Indeed, we have that $W_{r,s}=(-1)^n {\mathrm i}^{i+3j+1}\tilde{W}_{n,p}^{\mathrm{Tow}}(r,s)$ for $1 \leq r,s\leq n+p$ where $\tilde{W}_{n,p}^{\mathrm{Tow}}$ is defined in Theorem~\ref{thm:AztecTowerLGV}; see  also Remark~\ref{rem:signs} for a statement on the exact sign. 

The heart of the matter is that the map~\eqref{eq:baxter11} and its reverse~\eqref{eq:baxter22} can be iteratively used to find  LU-decomposition and UL-decomposition of the matrix $G$. From these decompositions it will be easy to find the inverse of $G$. However, we are after the inverse of a particular submatrix of $G$ of size $(n+p) \times (n+p)$. We will show how  an \textit{approximate} inverse for this submatrix can be computed using the LU- and UL-decomposition for the  doubly infinite matrix $G$. This  construction  is inspired by the formula for the approximate inverse of submatrices of block Toeplitz matrices, introduced by Widom~\cite{Wid74}. With the approximate inverse at hand, it will be easy to take the limit $p \to \infty$ and give a general expression for the correlation function $K$ of~\eqref{eq:kernelEM}. 

Before we come to the arguments we stress that the relevance of the final result is the following: if one is able to track and comprehend the flows defined by iterating the maps~\eqref{eq:hata}--\eqref{eq:hatb} and~\eqref{eq:defchecka}--\eqref{eq:defcheckb}  (for instance,  by finding closed  expressions) then the final result of the procedure in this section will give an explicit expression for correlation kernel. Understanding these flows is not a trivial matter, and one typically has to resort to weightings with special structures. For instance, uniform weights or doubly periodic weights which we will discuss briefly in Section~\ref{sec:periodic}.

\subsection{Inverse from LU- and UL-decomposition} \label{sec:wid}
We start with some basics facts on LU-decompositions.

Let $(G_{i,j})_{i,j=-\infty}^{\infty}$ be an infinite matrix that has an LU-decomposition and a UL-decomposition.  That is, we assume that, for $j=1,2$, there exist  lower triangular matrices $L^{(j)}$ and upper triangular matrices $U^{(j)}$ such that 
$$
    G=L^{(1)}U^{(1)}=U^{(2)}L^{(2)}.
$$
Suppose now that $L^{(j)}$ and $U^{(j)}$ are invertible and denote the inverses, for $j=1,2$, by the lower triangular matrices $\Lambda^{(j)}$ and upper triangular matrices $\Upsilon^{(j)}$. Then 
$$
    G^{-1}=\Upsilon^{(1)}\Lambda^{(1)}=\Lambda^{(2)}\Upsilon^{(2)}.
$$
We decompose all matrices in nine blocks:
\begin{equation}\label{eq:decompblocks}
 A=\begin{pmatrix}A_{11} & A_{12} & A_{13}\\
 A_{21}& A_{22} & A_{23}\\
 A_{31} & A_{32} & A_{33}\end{pmatrix},
 \end{equation}
where $A_{22}$ is a finite $(n+p)\times (n+p)$ matrix such that $(A_{22})_{i,j}=A_{i,j}$ for $i,j=-p,\ldots,n-1$. Note this also fixed the dimensions of the other blocks. In particular $A_{11}$ and $A_{33}$ are square infinite matrices.

We also need  the diagonal matrix $P_2$ defined by
$$P_2=\begin{pmatrix}
    0 & 0 & 0\\
    0 & I_2 & 0 \\
    0 & 0 & 0
\end{pmatrix},
$$
where $I_2$ is the identity matrix of size $n+p$ and is placed at columns at rows/columns of indices between $-p$ and $n-1$. We let $I$ denote the doubly infinite identity matrix. We will need the following formula 
\begin{equation}\label{eq:Lblockinverse}
\begin{pmatrix}
    L_{11} & 0 & 0\\
    L_{21} & L_{22} & 0 \\
    L_{31} & L_{32} & L_{33}
\end{pmatrix}^{-1}=
\begin{pmatrix}
    L_{11}^{-1} & 0 & 0\\
   -L_{22}^{-1} L_{21} L_{11}^{-1} & L_{22}^{-1} & 0 \\
    (L_{33}^{-1}L_{32}L_{22}^{-1}L_{21}-L_{33}^{-1}L_{31}^{-1})L_{11}^{-1} & -L_{33}^{-1}L_{32}L_{22}^{-1} & L_{33}^{-1}
\end{pmatrix}.
\end{equation}

The following result is a key step in inverting the matrix $W$. 

\begin{lem}
    We have
    \begin{multline} \label{eq:approximateinverse}
     \left(G_{22}\right)\left(\left(G^{-1}\right)_{22}-\Upsilon^{(1)}_{23} \Lambda^{(1)}_{32} -\Lambda^{(2)}_{21} \Upsilon^{(2)}_{12}\right)\\
     =I_2+L^{(1)}_{21} U^{(1)}_{11} \Upsilon^{(1)}_{13} \Lambda^{(1)}_{32}+U_{23}^{(2)}L^{(2)}_{33}\Lambda^{(2)}_{31} \Upsilon^{(2)}_{12}.
    \end{multline}
\end{lem}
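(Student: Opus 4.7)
The plan is to reduce everything to blockwise identities forced by the triangular structure of the factors, and then combine them using $GG^{-1}=I$ at the central block. The triangularity of $U^{(j)},\Upsilon^{(j)}$ (upper) and $L^{(j)},\Lambda^{(j)}$ (lower) makes all blocks strictly below, resp. above, the block-diagonal vanish; in particular $\Upsilon^{(1)}_{21}=\Upsilon^{(1)}_{31}=\Upsilon^{(1)}_{32}=0$ and $\Lambda^{(1)}_{12}=\Lambda^{(1)}_{13}=\Lambda^{(1)}_{23}=0$, and analogously for the UL-factors. From $U^{(1)}\Upsilon^{(1)}=I$ and $L^{(1)}\Lambda^{(1)}=I$ read off at block $(2,2)$, one obtains $\Upsilon^{(1)}_{22}=(U^{(1)}_{22})^{-1}$ and $\Lambda^{(1)}_{22}=(L^{(1)}_{22})^{-1}$, and similarly for the second decomposition.

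First I would expand $(G^{-1})_{22}$ in two ways. Since only two of the three terms in each blockwise product survive,
\[
(G^{-1})_{22}=\Upsilon^{(1)}_{22}\Lambda^{(1)}_{22}+\Upsilon^{(1)}_{23}\Lambda^{(1)}_{32}
=\Lambda^{(2)}_{22}\Upsilon^{(2)}_{22}+\Lambda^{(2)}_{21}\Upsilon^{(2)}_{12}.
\]
Consequently, the matrix in parentheses on the left-hand side of the claim equals
\[
\Upsilon^{(1)}_{22}\Lambda^{(1)}_{22}+\Lambda^{(2)}_{22}\Upsilon^{(2)}_{22}-(G^{-1})_{22}.
\]

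Next, from $G\Upsilon^{(1)}=L^{(1)}U^{(1)}\Upsilon^{(1)}=L^{(1)}$, the $(2,2)$ block (with $\Upsilon^{(1)}_{32}=0$) gives $G_{22}\Upsilon^{(1)}_{22}+G_{21}\Upsilon^{(1)}_{12}=L^{(1)}_{22}$. Post-multiplying by $\Lambda^{(1)}_{22}$ and using $L^{(1)}_{22}\Lambda^{(1)}_{22}=I_2$ yields
\[
G_{22}\Upsilon^{(1)}_{22}\Lambda^{(1)}_{22}=I_2-G_{21}\,\Upsilon^{(1)}_{12}\Lambda^{(1)}_{22}.
\]
The identity $G=L^{(1)}U^{(1)}$ at block $(2,1)$ gives $G_{21}=L^{(1)}_{21}U^{(1)}_{11}$, and expanding $(G^{-1})_{12}=(\Upsilon^{(1)}\Lambda^{(1)})_{12}$ yields $\Upsilon^{(1)}_{12}\Lambda^{(1)}_{22}=(G^{-1})_{12}-\Upsilon^{(1)}_{13}\Lambda^{(1)}_{32}$. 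Substituting these leads to
\[
G_{22}\Upsilon^{(1)}_{22}\Lambda^{(1)}_{22}=I_2-G_{21}(G^{-1})_{12}+L^{(1)}_{21}U^{(1)}_{11}\Upsilon^{(1)}_{13}\Lambda^{(1)}_{32}.
\]
A mirror argument using $G\Lambda^{(2)}=U^{(2)}$ (with $\Lambda^{(2)}_{12}=0$), together with $G_{23}=U^{(2)}_{23}L^{(2)}_{33}$ and $\Lambda^{(2)}_{32}\Upsilon^{(2)}_{22}=(G^{-1})_{32}-\Lambda^{(2)}_{31}\Upsilon^{(2)}_{12}$, produces
\[
G_{22}\Lambda^{(2)}_{22}\Upsilon^{(2)}_{22}=I_2-G_{23}(G^{-1})_{32}+U^{(2)}_{23}L^{(2)}_{33}\Lambda^{(2)}_{31}\Upsilon^{(2)}_{12}.
\]

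Finally I would add these two identities and subtract $G_{22}(G^{-1})_{22}$. The $(2,2)$ block of $GG^{-1}=I$ reads $G_{21}(G^{-1})_{12}+G_{22}(G^{-1})_{22}+G_{23}(G^{-1})_{32}=I_2$, so the sum of the three ``$(G^{-1})$-cross-terms'' collapses to $-I_2$, leaving precisely
\[
I_2+L^{(1)}_{21}U^{(1)}_{11}\Upsilon^{(1)}_{13}\Lambda^{(1)}_{32}+U^{(2)}_{23}L^{(2)}_{33}\Lambda^{(2)}_{31}\Upsilon^{(2)}_{12},
\]
which matches the right-hand side. There is no real obstacle here: the calculation is purely algebraic, and the only point requiring care is bookkeeping of which triangular blocks vanish at each step. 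What is conceptually important is the shape of the residual terms: each is a product of four blocks that reach from one far corner of the matrix to the other, which is precisely why they become negligible in the limit $p\to\infty$ needed in the sequel.
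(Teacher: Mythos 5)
Your proof is correct: every blockwise identity you use (the vanishing of the off-triangular blocks, $(G^{-1})_{22}=\Upsilon^{(1)}_{22}\Lambda^{(1)}_{22}+\Upsilon^{(1)}_{23}\Lambda^{(1)}_{32}=\Lambda^{(2)}_{22}\Upsilon^{(2)}_{22}+\Lambda^{(2)}_{21}\Upsilon^{(2)}_{12}$, $G_{21}=L^{(1)}_{21}U^{(1)}_{11}$, $G_{23}=U^{(2)}_{23}L^{(2)}_{33}$, and $L^{(1)}_{22}\Lambda^{(1)}_{22}=U^{(2)}_{22}\Upsilon^{(2)}_{22}=I_2$) checks out, and the final cancellation via $G_{21}(G^{-1})_{12}+G_{22}(G^{-1})_{22}+G_{23}(G^{-1})_{32}=I_2$ delivers exactly the claimed right-hand side. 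The strategy is the same pure block algebra as in the paper, but the organization differs in one genuine respect: the paper pivots on the projector identity $P_2GP_2G^{-1}P_2=P_2-P_2G(I-P_2)G^{-1}P_2$, substitutes $(G^{-1})_{12}=\Lambda^{(2)}_{11}\Upsilon^{(2)}_{12}$ and $(G^{-1})_{32}=\Upsilon^{(1)}_{33}\Lambda^{(1)}_{32}$, and then needs the explicit block-inverse formula \eqref{eq:Lblockinverse} to evaluate $L^{(2)}$ and $U^{(1)}$ applied to matrices built from $\Lambda^{(2)}$- and $\Upsilon^{(1)}$-blocks; you instead read the $(2,2)$ block of the one-sided identities $G\Upsilon^{(1)}=L^{(1)}$ and $G\Lambda^{(2)}=U^{(2)}$, which makes the block-inverse formula unnecessary and renders the two halves of the argument manifestly symmetric. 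What the paper's route buys is that it isolates $(G^{-1})_{22}$ minus the two correction terms as a single object from the start, which matches how the approximate inverse is then used; what your route buys is fewer moving parts and a cleaner explanation of where the two residual "corner-to-corner" products come from. Your closing remark about why those residuals are small as $p\to\infty$ is exactly the point of Proposition~5.3 in the paper.
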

\begin{proof}
   
   From $G^{-1}=\Lambda^{(2)} \Upsilon^{(2)}=\Upsilon^{(1)}\Lambda^{(1)}$ and the fact that $\Lambda^{(j)}$ and $\Upsilon^{(j)}$ are lower and upper triangular respectively, we find
   \begin{equation} \label{eq:G22inverse12}
        (G^{-1})_{12}=\Lambda^{(2)}_{11} \Upsilon_{12}^{(2)}.
   \end{equation} 
  and
   \begin{equation} \label{eq:G22inverse32}
       (G^{-1})_{32}= \Upsilon_{33}^{(1)}\Lambda^{(1)}_{32}.
   \end{equation}
   Now
   \begin{multline} \label{eq:G22G22inverse}
   \begin{pmatrix} 0 & 0 & 0\\
    0& G_{22}\left(G^{-1}\right)_{22}&0\\
    0 & 0 &0
   \end{pmatrix}=P_2 G P_2 G^{-1} P_2=
   P_2-P_2 G(I-P_2) G^{-1}P_2\\=P_2-
   P_2 G \begin{pmatrix} 0& (G^{-1})_{12} &0 \\
    0& 0 & 0 \\
    0& (G^{-1})_{32}& 0
   \end{pmatrix} 
   =P_2-
   P_2 G \begin{pmatrix} 0& \Lambda^{(2)}_{11} \Upsilon_{12}^{(2)} &0 \\
    0& 0 & 0 \\
    0& \Upsilon_{33}^{(1)}\Lambda^{(1)}_{32}& 0
   \end{pmatrix} 
\end{multline}
where we inserted~\eqref{eq:G22inverse12} and~\eqref{eq:G22inverse32} in the last step. Note also that 
\begin{equation}\label{eq:G22rest}
    \begin{pmatrix} 0 &0 &0 \\
        0 & G_{22} \left( \Upsilon^{(1)}_{23} \Lambda^{(1)}_{32} +\Lambda^{(2)}_{21} \Upsilon^{(2)}_{12} \right)& 0 \\
        0 & 0 &0 
    \end{pmatrix}=P_2 G
    \begin{pmatrix} 0&0 &0 \\
        0&  \Upsilon^{(1)}_{23} \Lambda^{(1)}_{32} +\Lambda^{(2)}_{21} \Upsilon^{(2)}_{12} & 0 \\
        0& 0& 0
       \end{pmatrix}.
    \end{equation}
    Combining~\eqref{eq:G22G22inverse} and~\eqref{eq:G22rest} gives
\begin{multline}\label{eq:G22G22inverserest}
\begin{pmatrix}
    0&0&0\\
    0& G_{22}\left(\left(G^{-1}\right)_{22}-\Upsilon^{(1)}_{23} \Lambda^{(1)}_{32} -\Lambda^{(2)}_{21} \Upsilon^{(2)}_{12} \right) & 0 \\
    0 &0 & 0
\end{pmatrix}
=P_2-P_2G\begin{pmatrix} 0&\Lambda^{(2)}_{11} \Upsilon_{12}^{(2)} &0 \\
    0&  \Upsilon^{(1)}_{23} \Lambda^{(1)}_{32} +\Lambda^{(2)}_{21} \Upsilon^{(2)}_{12} & 0 \\
    0& \Upsilon_{33}^{(1)}\Lambda^{(1)}_{32}& 0
   \end{pmatrix}\\
    =P_2-P_2G\begin{pmatrix} 0&\Lambda^{(2)}_{11} \Upsilon_{12}^{(2)} &0 \\
    0& \Lambda^{(2)}_{21} \Upsilon^{(2)}_{12} & 0 \\
    0&0& 0
   \end{pmatrix}-P_2G\begin{pmatrix} 0&0&0 \\
    0&  \Upsilon^{(1)}_{23} \Lambda^{(1)}_{32} & 0 \\
    0& \Upsilon_{33}^{(1)}\Lambda^{(1)}_{32}& 0
   \end{pmatrix}\\
   =P_2-P_2U^{(2)} L^{(2)} \begin{pmatrix} 0&\Lambda^{(2)}_{11} \Upsilon_{12}^{(2)} &0 \\
    0& \Lambda^{(2)}_{21} \Upsilon^{(2)}_{12} & 0 \\
    0&0& 0
   \end{pmatrix}-P_2 L^{(1)} U^{(1)} \begin{pmatrix} 0&0&0 \\
    0&  \Upsilon^{(1)}_{23} \Lambda^{(1)}_{32} & 0 \\
    0& \Upsilon_{33}^{(1)}\Lambda^{(1)}_{32}& 0
   \end{pmatrix}.
\end{multline}
Now using that $\Lambda^{(2)}$ is the inverse of $L^{(2)}$ we can write {using~\eqref{eq:Lblockinverse}}
$$L^{(2)} \begin{pmatrix} 0&\Lambda^{(2)}_{11} \Upsilon_{12}^{(2)} &0 \\
    0& \Lambda^{(2)}_{21} \Upsilon^{(2)}_{12} & 0 \\
    0&0& 0
   \end{pmatrix}=\begin{pmatrix} 0&\Upsilon_{12}^{(2)} &0 \\
    0& 0 & 0 \\
    0&-L^{(2)}_{33}\Lambda^{(2)}_{31} \Upsilon^{(2)}_{12}& 0
   \end{pmatrix},
   $$
   and thus, using that fact that $U^{(2)}$ is upper triangular
\begin{equation} \label{eq:P2U2L2}
P_2U^{(2)} L^{(2)} \begin{pmatrix} 0&\Lambda^{(2)}_{11} \Upsilon_{12}^{(2)} &0 \\
    0& \Lambda^{(2)}_{21} \Upsilon^{(2)}_{12} & 0 \\
    0&0& 0
   \end{pmatrix}=-U_{23}^{(2)}L^{(2)}_{33}\Lambda^{(2)}_{31} \Upsilon^{(2)}_{12}
\end{equation}
Similarly, 
$$
U^{(1)} \begin{pmatrix} 0&0&0 \\
    0&  \Upsilon^{(1)}_{23} \Lambda^{(1)}_{32} & 0 \\
    0& \Upsilon_{33}^{(1)}\Lambda^{(1)}_{32}& 0
   \end{pmatrix}
   =\begin{pmatrix} 0& -U^{(1)}_{11} \Upsilon^{(1)}_{13} \Lambda^{(1)}_{32} &0 \\
    0& 0 & 0 \\
    0& \Lambda^{(1)}_{32}& 0
   \end{pmatrix}
   $$
   and thus
   \begin{equation}\label{eq:P2L1U1}
   P_2 L^{(1)}U^{(1)} \begin{pmatrix} 0&0&0 \\
    0&  \Upsilon^{(1)}_{23} \Lambda^{(1)}_{32} & 0 \\
    0& \Upsilon_{33}^{(1)}\Lambda^{(1)}_{32}& 0
   \end{pmatrix}
   =
-L^{(1)}_{21} U^{(1)}_{11} \Upsilon^{(1)}_{13} \Lambda^{(1)}_{32}.
\end{equation}
   We obtain the statement after inserting~\eqref{eq:P2U2L2} and~\eqref{eq:P2L1U1} into~\eqref{eq:G22G22inverserest}. 
\end{proof}
The intuition behind~\eqref{eq:approximateinverse} is the following: in special cases, the matrices that we are interested in are diagonally dominant and the values on the $k$-th subdiagonals above and below the main diagonal  decrease rapidly with $k$. This means that the entries of the matrices $\Upsilon^{(1)}_{13}$ and $\Lambda^{(2)}_{31}$ are small. This can be used to show that the right-hand side of~\eqref{eq:approximateinverse} equals $I_2$ plus a small correction.  Thus 
\begin{equation}\label{eq:approxiinv}
 \left(G^{-1}\right)_{22}-\Upsilon^{(1)}_{23} \Lambda^{(1)}_{32} -\Lambda^{(2)}_{21} \Upsilon^{(2)}_{12}
\end{equation}
 is an \emph{approximate} inverse to $G_{22}$.

Note that 
\begin{equation}\label{eq:G22iinv1}
    (G^{-1})_{22}=\Lambda^{(2)}_{22} \Upsilon^{(2)}_{22}+\Lambda^{(2)}_{21} \Upsilon^{(2)}_{12},
\end{equation}
but also 
\begin{equation}\label{eq:G22iinv2}
    (G^{-1})_{22}= \Upsilon^{(1)}_{22} \Lambda^{(1)}_{22}+\Upsilon^{(1)}_{23} \Lambda^{(1)}_{32}.
\end{equation}
In the upper left corner of~\eqref{eq:approxiinv}, the matrix $\Upsilon^{(1)}_{23} \Lambda^{(1)}_{32}$ is small and thus by~\eqref{eq:approxiinv} and~\eqref{eq:G22iinv1} we find that
$$
 (G_{22})^{-1}\approx \Lambda^{(2)}_{22} \Upsilon^{(2)}_{22}.
$$
Similarly, in the lower right corner we find 
$$
 (G_{22})^{-1}\approx \Upsilon^{(1)}_{22} \Lambda^{(1)}_{22}.
$$
We formalize this discussion in the following proposition. 
\begin{prop}\label{prop:asymptoticinverse}
    Assume that there exists an $R>0$ and $0 < \rho <1$ such that, for $i,j \in \mathbb Z$,
    \begin{align} \label{eq:bounds_giving_essentialbanded1}
     |(L_{21}^{(1)}U_{11}^{(1)})_{i,j}|&\leq R, \\
      |(U_{23}^{(2)}L^{(2)}_{33})_{i,j}| &\leq R, \\
      |\Lambda_{i,j}^{(1,2)}|, |\Upsilon_{i,j}^{(1,2)}|& \leq R \rho^{-|i-j|}. \label{eq:bounds_giving_essentialbanded3}
    \end{align}

    Then,
    \begin{equation} \label{eq:approximateinverse2}
        \left(G_{22}\right)\left(\left(G^{-1}\right)_{22}-\Upsilon^{(1)}_{23} \Lambda^{(1)}_{32} -\Lambda^{(2)}_{21} \Upsilon^{(2)}_{12}\right)\\
        =I_2+\mathcal O(p^2 \rho^{p}),
       \end{equation}
       as $p\to \infty$. The error term is with respect to the standard matrix norm. 
       
       Moreover, for $i,j=1,\ldots,n+p$,
       \begin{equation} \label{eq:asymptoticinverseleftercorner}
       \left((G_{22})^{-1}\right)_{-p+i-1,-p+j-1}=(\Lambda^{(2)}_{22} \Upsilon^{(2)}_{22})_{-p+i,-p+j}+ \mathcal O(p^2\rho^{p}),
       \end{equation}
       and 
       \begin{equation} \label{eq:asymptoticinverserightcorner}
       \left((G_{22})^{-1}\right)_{n-i,n-j}=(\Upsilon^{(1)}_{22} \Lambda^{(1)}_{22})_{n-i,n-j}+ \mathcal O(p^2 \rho^{p}),
       \end{equation}
       as $p\to \infty$.
\end{prop}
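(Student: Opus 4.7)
The plan is to start from the exact identity established in the preceding lemma,
\[
G_{22}\bigl((G^{-1})_{22}-\Upsilon^{(1)}_{23}\Lambda^{(1)}_{32}-\Lambda^{(2)}_{21}\Upsilon^{(2)}_{12}\bigr)=I_2+\mathcal{E}_1+\mathcal{E}_2,
\]
with $\mathcal{E}_1=L^{(1)}_{21}U^{(1)}_{11}\Upsilon^{(1)}_{13}\Lambda^{(1)}_{32}$ and $\mathcal{E}_2=U^{(2)}_{23}L^{(2)}_{33}\Lambda^{(2)}_{31}\Upsilon^{(2)}_{12}$, and to reduce the entire argument to norm estimates on the two error matrices.

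The geometric input that drives the estimate is that $\Upsilon^{(1)}_{13}$ sits in the block with row indices $\leq -p-1$ and column indices $\geq n$, so the row-column gap is at least $n+p+1$ everywhere; the analogous statement holds for $\Lambda^{(2)}_{31}$. Combined with the hypothesis that $L^{(1)}_{21}U^{(1)}_{11}$ is uniformly bounded and that $\Upsilon^{(j)},\Lambda^{(j)}$ decay exponentially off the diagonal, an entry of $\mathcal{E}_1$ with $i,j\in\{-p,\ldots,n-1\}$ is controlled by two nested geometric series,
\[
|(\mathcal{E}_1)_{ij}|\leq R^3\sum_{k\leq -p-1}\sum_{\ell\geq n}\rho^{\ell-k}\rho^{\ell-j}=\frac{R^3\,\rho^{\,p+2n+1-j}}{(1-\rho)(1-\rho^2)},
\]
and a row sum then yields $\|\mathcal{E}_1\|_\infty=\mathcal{O}(\rho^{n+p})$. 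A symmetric computation handles $\mathcal{E}_2$.

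To convert this into the stated approximate inverse I would write $(G_{22})^{-1}=X(I_2+\mathcal{E}_1+\mathcal{E}_2)^{-1}$ with $X:=(G^{-1})_{22}-\Upsilon^{(1)}_{23}\Lambda^{(1)}_{32}-\Lambda^{(2)}_{21}\Upsilon^{(2)}_{12}$, expand via Neumann series, and control $\|X\|$. The polynomial factor $p^2$ in the stated bound is a margin absorbing crude estimates on $\|X\|$ and on the Neumann tail: $X$ is assembled from triangular blocks whose column sums over the window of size $n+p$ may grow at worst polynomially in $p$. The main obstacle I anticipate is precisely this bookkeeping, since the natural entrywise estimates are much stronger than what is claimed, and turning them into operator-norm bounds for matrices of size $n+p$ without losing the exponential factor requires some care.

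For the corner statements \eqref{eq:asymptoticinverseleftercorner} and \eqref{eq:asymptoticinverserightcorner}, I would rewrite $X$ via the alternative forms of $(G^{-1})_{22}$ in \eqref{eq:G22iinv1} and \eqref{eq:G22iinv2}, giving
\[
X=\Lambda^{(2)}_{22}\Upsilon^{(2)}_{22}-\Upsilon^{(1)}_{23}\Lambda^{(1)}_{32}=\Upsilon^{(1)}_{22}\Lambda^{(1)}_{22}-\Lambda^{(2)}_{21}\Upsilon^{(2)}_{12}.
\]
For an upper-left entry with both coordinates close to $-p$, the decay bound gives $|(\Upsilon^{(1)}_{23}\Lambda^{(1)}_{32})_{ij}|\leq C\rho^{2n-i-j}\leq C'\rho^{2n+2p}$, which is negligible compared to $\rho^p$, and \eqref{eq:asymptoticinverseleftercorner} follows at once from the first form of $X$. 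The lower-right corner \eqref{eq:asymptoticinverserightcorner} is handled identically from the second form, using the analogous decay of $\Lambda^{(2)}_{21}\Upsilon^{(2)}_{12}$ near the $(n,n)$ corner.
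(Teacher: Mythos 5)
Your proposal is correct and follows essentially the same route as the paper: the same exact identity from the preceding lemma, the same entrywise double geometric-series bound on the two error products $L^{(1)}_{21}U^{(1)}_{11}\Upsilon^{(1)}_{13}\Lambda^{(1)}_{32}$ and $U^{(2)}_{23}L^{(2)}_{33}\Lambda^{(2)}_{31}\Upsilon^{(2)}_{12}$, and the same use of \eqref{eq:G22iinv1}--\eqref{eq:G22iinv2} together with the negligibility of the opposite-corner term to get the two corner formulas. The only cosmetic differences are that the paper converts the entrywise bound to an operator-norm bound via $\|A\|\leq\sum_{i,j}|A_{i,j}|$ (which is where the $(n+p)^2$ factor arises) rather than your sharper row-sum estimate, and that the paper asserts the invertibility of $G_{22}$ and $(G_{22})^{-1}=X+\mathcal O(\rho^p)$ with the same implicit Neumann-series step you flag.
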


\begin{proof}
    By expanding the product
    $$
     \left(L_{21}^{(1)} U_{11}^{(1)} \Upsilon^{(1)}_{13} \Lambda_{32}^{(1)}\right)_{r,s}
     =\sum_{j_1=-\infty}^{-p-1}  \sum_{j_2=n}^\infty  \left(L_{21}^{(1)} U_{11}^{(1)}\right)_{r,j_1} \left( \Upsilon^{(1)}_{13}\right)_{j_1,j_2}\left( \Lambda_{32}^{(1)}\right)_{j_2,s}
    $$
    and using the bounds~\eqref{eq:bounds_giving_essentialbanded1} and~\eqref{eq:bounds_giving_essentialbanded3} we find, for $-p \leq r,s \leq n-1$,
    $$
    |\left(L_{21}^{(1)} U_{11}^{(1)} \Upsilon^{(1)}_{13} \Lambda_{32}^{(1)}\right)_{r,s}|\leq R^3 \sum_{j_1=-\infty}^{-p-1}  \sum_{j_2=n}^\infty  \rho^{j_2-j_1} \rho^{j_2-s}
    =  C \rho^{p},
    $$
    for some constant $C$ independent of $r$ and $s$. By further using the standard inequality $\|A\| \leq \sum_{i,j}|A_{i,j}|$ we thus find
    \begin{equation} \label{eq:approximateinversenorm1}
        \left\|L_{21}^{(1)} U_{11}^{(1)} \Upsilon^{(1)}_{13} \Lambda_{32}^{(1)}\right\| \leq C (n+p)^2 \rho^p.
    \end{equation}
    By a similar argument, we find that (we can choose the constant $C>0$ large enough such that also)
    \begin{equation}\label{eq:approximateinversenorm2}
    \left\|U_{23}^{(2)}L^{(2)}_{33}\Lambda^{(2)}_{31} \Upsilon^{(2)}_{12}\right\| \leq C (n+p)^2 \rho^p.
    \end{equation} 
    Then~\eqref{eq:approximateinversenorm1} and~\eqref{eq:approximateinversenorm2} prove~\eqref{eq:approximateinverse2}. It also shows that $G_{22}$ is invertible for $p$ sufficiently large and 
    $$
     \left(G_{22}\right)^{-1}=\left(\left(G^{-1}\right)_{22}-\Upsilon^{(1)}_{23} \Lambda^{(1)}_{32} -\Lambda^{(2)}_{21} \Upsilon^{(2)}_{12}\right)+ \mathcal O(\rho^{p}),
     $$
     as $p \to \infty$. 
     
      Finally,~\eqref{eq:asymptoticinverseleftercorner} and~\eqref{eq:asymptoticinverserightcorner} follow by applying~\eqref{eq:bounds_giving_essentialbanded3} to~\eqref{eq:G22iinv1} and~\eqref{eq:G22iinv2} respectively and inserting the result in~\eqref{eq:approxiinv}.
\end{proof}

In the special case that $G$ is a block Toeplitz matrix, this result was already proved by Widom in~\cite{Wid74}. We will also come back to this in Appendix~\ref{appendix:BD}.
\subsection{LU-decomposition} \label{sec:generalLU}
We now show how we can obtain an LU-decomposition for the doubly infinite matrix $VS^{-n}$ with $V$ as in~\eqref{eq:defV}. The idea is to use the commutation relation~\eqref{eq:baxter11}  and shift  all $M_{2i}$ to the right and all $M_{2i+1}$ to the left. 

Repeatedly applying~\eqref{eq:baxter11} gives
$$
V=X_0  \Psi \hat M_0 \Psi   \hat M_2 \Psi \cdots  \Psi \hat M_{2n-2} X_{n}^{-1}. 
$$
Care should be take here since $X_n^{-1}$ requires the parameters of $M_{2n}$ and this matrix is not necessarily defined. Instead, we will work with the assumption that $X_{n}=I$, giving
$$
V=X_0  \Psi \hat M_0 \Psi   \hat M_2 \Psi \cdots  \Psi \hat M_{2n-2} 
$$ 
Since $X_0 \Psi$ and $\hat M_{2n-2}$ are at the desired locations already, we drop these factors and continue with 
$$
\hat M_0 \Psi   \hat M_2 \Psi \cdots  \hat M_{2n-4} \Psi,
$$ 
and refactorize $\hat M_{2j} \Psi$ using the same principles.  We iterate this procedure in total  $n$ times so that all factors are on the desired place. 

This iteration is described by the following algorithm:

For  $j=0$ we set, 
$$
({\mathtt a}^{(0)}_i,{\mathtt b}^{(0)}_i)_{i=0}^{n-1}=({\mathtt a}^{(j)}_i,{\mathtt b}^{(j)}_i)_{i=0}^{n-1}
$$
and then, for $j=1,\ldots, n$,
$$
({\mathtt a}^{(j)}_i,{\mathtt b}^{(j)}_i)_{i=0}^{n-j}=(\hat {\mathtt a}^{(j-1)}_i,\hat {\mathtt b}^{(j-1)}_i)_{i=0}^{n-j}
$$
where, for $i=0,\ldots,n-j-1$, we define $\hat {\mathtt a}^{(j-1)}_i$ and $\hat {\mathtt b}^{(j-1)}_i$ as in~\eqref{eq:hata} and~\eqref{eq:hatb} respectively,  and, for $i=n-j$ we set, for $k \in \mathbb Z$,
\begin{equation}\label{eq:hatalast}
    \hat { a}^{(j-1)}_{n-j,k}=a^{(j-1)}_{n-j,k} \frac{1}{ a^{(j-1)}_{n-j,k}+ b^{(j-1)}_{n-j,k}},
\end{equation}
and 
\begin{equation} \label{eq:hatblast}
    \hat { b}^{(j-1)}_{n-j,k}=  b_{n-j,k-1}^{(j-1)} \frac{1}{ a^{(j-1)}_{n-j,k-1}+ b^{(j-1)}_{n-j,k-1}}.
\end{equation}
The difference in the last step is explained by the fact that~\eqref{eq:hata} and~\eqref{eq:hatb} are defined for infinite products, while we are working with a finite product and thus care should be taken for the last matrix in the product.

We then set 
$$ 
M_{2i}^{(j)}=D(\mathtt a^{(j)}_i)+D(\mathtt b^{(j)}_i) S,
$$
Note also that from~\eqref{eq:baxter11} we have 
$$
M_{2i}^{(j-1)}\Psi =X_i^{(j-1)} \Psi M_{2i}^{(j)} (X_{i+1}^{(j-1)})^{-1}, \quad i=0,\ldots,n-j,
$$
where 
$$
X_i^{(j-1)}=\begin{cases}
    D({\mathtt a}_i^{(j-1)}+\text b_i^{(j-1)}), & i=0,\ldots,n-j\\
    I, & i=n-j+1.
\end{cases}
$$
The conclusion of this procedure is that 
$$ 
    V=X_0^{(0)} \Psi X_0^{(1)} \Psi X_0^{(2)} \Psi \cdots X_0^{(n-1)} \Psi M_0^{(n)}M_2^{(n-1)}\cdots M_{2(n-1)}^{(1)}.
$$
After multiplying by  $S^{-n}$ from the right, this gives an LU-decomposition for $G$.

\begin{lem}\label{lem:LUd}
    Let 
\begin{equation}
\label{eq:defL1}
L^{(1)}=X_0^{(0)} \Psi X_0^{(1)} \Psi X_0^{(2)} \Psi \cdots X_0^{(n-1)}\Psi
\end{equation}
 and
\begin{equation} \label{eq:defU1}
U^{(1)}= M_0^{(n)}M_2^{(n-1)}\cdots M_{2(n-1)}^{(1)}S^{-n}.
\end{equation}
The $L^{(1)}$ is lower triangular, $U^{(1)}$ is  upper triangular matrix and $G=VS^{-n}=L^{(1)}U^{(1)}$.
\end{lem}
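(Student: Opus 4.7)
The plan is to prove the lemma in three stages: first establish the triangularity of $L^{(1)}$, then the triangularity of $U^{(1)}$ (this is the least obvious of the three), and finally obtain the identity $VS^{-n}=L^{(1)}U^{(1)}$ by iterating the refactorization identity $n$ times with careful telescoping.

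The triangularity of $L^{(1)}$ is essentially immediate: each $X_0^{(j)}=D(\mathtt a_0^{(j)}+\mathtt b_0^{(j)})$ is diagonal and hence lower triangular, while $\Psi=\sum_{k\ge 0}S^k$ is lower triangular because the shift $S$ lies on the first subdiagonal. Since products of lower triangular matrices are lower triangular, $L^{(1)}$ is lower triangular. For $U^{(1)}$, I would observe that each factor $M_{2i}^{(j)}=D(\mathtt a_i^{(j)})+D(\mathtt b_i^{(j)})S$ has nonzero entries only on the main diagonal and first subdiagonal, so that a product of $n$ such bidiagonal matrices has nonzero entry $(r,s)$ only when $r-s\in\{0,1,\ldots,n\}$. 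Right multiplication by $S^{-n}$ implements $(AS^{-n})_{rs}=A_{r,s-n}$, shifting the support to $r-s\in\{-n,\ldots,0\}$, so $U^{(1)}$ is upper triangular.

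For the factorization itself, the workhorse is the identity
\begin{equation*}
M_{2i}^{(j-1)}\Psi=X_i^{(j-1)}\Psi\,M_{2i}^{(j)}\,(X_{i+1}^{(j-1)})^{-1},\qquad i=0,\ldots,n-j,
\end{equation*}
which follows from the commutation lemma of Section~\ref{sec:dynamics} together with the definitions of the iterated parameters, with the boundary convention $X_{n-j+1}^{(j-1)}=I$. I would proceed by induction on $j=1,\ldots,n$, maintaining the invariant that at the start of iteration $j$ the expression has the shape $X_0^{(0)}\Psi\cdots X_0^{(j-2)}\Psi\cdot B_{j-1}\cdot R_{j-1}$, where $B_{j-1}=M_0^{(j-1)}\Psi\cdots M_{2(n-j)}^{(j-1)}\Psi$ is the active body and $R_{j-1}=M_{2(n-j+1)}^{(j-2)}\cdots M_{2(n-1)}^{(1)}$ is the already-finalized tail. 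Applying the identity simultaneously to every pair in $B_{j-1}$, the adjacent $X$-factors telescope via $(X_{i+1}^{(j-1)})^{-1}X_{i+1}^{(j-1)}=I$, and the boundary convention eliminates the rightmost $(X_{n-j+1}^{(j-1)})^{-1}$, leaving $X_0^{(j-1)}\Psi$ on the left, the new body $B_j$, and a newly finalized factor $M_{2(n-j)}^{(j)}$ prepended to $R_{j-1}$. After $n$ iterations the body is empty and $V$ equals $L^{(1)}U^{(1)}S^n$, from which $G=L^{(1)}U^{(1)}$ follows after multiplication by $S^{-n}$.

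The main obstacle is the consistency of the identity at the right boundary: one must check that substituting $X_{n-j+1}^{(j-1)}=I$ into the generic update~\eqref{eq:hata}--\eqref{eq:hatb} exactly reproduces the modified rules~\eqref{eq:hatalast}--\eqref{eq:hatblast}, so that one and the same identity governs both interior and boundary pairs and the telescoping is clean. This is a direct computation once one notes that $I=D(\mathtt a+\mathtt b)$ only in the formal sense that the factor $\mathtt a_{i+1,k}^{(j-1)}+\mathtt b_{i+1,k}^{(j-1)}$ appearing in the numerators of~\eqref{eq:hata}--\eqref{eq:hatb} is simply suppressed. Once this is confirmed, the inductive bookkeeping of indices is routine.
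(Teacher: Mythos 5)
Your proof is correct and follows essentially the same route as the paper: the telescoping induction you spell out is exactly the iteration described in the text preceding the lemma (with the boundary rules \eqref{eq:hatalast}--\eqref{eq:hatblast} playing precisely the role of your convention $X_{n-j+1}^{(j-1)}=I$), and the lower-triangularity of $L^{(1)}$ is argued identically. The only cosmetic difference is in the upper-triangularity of $U^{(1)}$: you count the band support of the product of bidiagonal factors and observe that right multiplication by $S^{-n}$ shifts it into the upper triangle, whereas the paper interleaves shifts and writes $U^{(1)}$ as a product of conjugates $S^{i} M_{2i}^{(n-i)} S^{-i-1}$, each of which is upper triangular --- the two arguments are equivalent.
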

\begin{proof}
    We have already seen that  $VS^{-n}=L^{(1)}U^{(1)}$. It remains to prove the triangular structure of $L^{(1)}$ and $U^{(1)}$. From the definition~\eqref{eq:defpsi} it is clear that $\Psi$ is lower triangular. Combining this with the fact that each $X_0^{(j)}$ is diagonal, it follows that $L^{(1)}$ is lower triangular. To see that $U^{(1)}$ is upper triangular, we write 
    $$
     U^{(1)}= M_0^{(n)}S^{-1} S M_2^{(n-1)}S^{-2} S^2 \cdots S^{n-1} M_{2(n-1)}^{(1)}S^{-n}.
    $$

    Each $S^{i} M_{2i}^{(n-i)} S^{(-i-1)}$, for $i=0,\ldots,n-1$, is upper triangular and thus the product $U^{(1)}$ is upper triangular. 
\end{proof}
\subsection{UL-decomposition}

To obtain a UL-decomposition we use the reverse dynamics but with all the $M_{2i}$ at the left and all $M_{2i+1}$ to the right. Since the outermost factors are already at the desired place, we drop them in the first step and start with 
$$
\Psi M_2 \Psi M_4  \Psi \cdots  M_{2n-2}.
$$
Repeatedly applying~\eqref{eq:baxter22} gives
$$
\Psi M_2 \Psi M_4  \Psi \cdots  M_{2n-2} =Y_0^{-1} \check M_2  \Psi\check M_4 \Psi \cdots   \check M_{2n-2}\Psi Y_{n-1}. 
$$
In this factorization we have the freedom to choose $Y_0$ as we wish (as long as it is consistent for the definition of $\check M_2$), and we choose it to be $Y_0=I$ so that it can be removed from the product. 
Now we drop the first and last factor  and obtain
$$
\Psi \check M_4 \Psi   \check M_6 \Psi \cdots  \check M_{2n-2},
$$ 
and refactorize $\Psi \check M_{2j} $ using the same principles.  We then iterate this procedure in total $n-1$  times until all factors are on the desired place. 

The result can be presented as follows. 

We start by defining 
$$
 (\mathtt a^{[0]}_i, \mathtt b^{[0]}_i)_{i=0}^{n-1} =  (\mathtt a_i,\mathtt b_i)_{i=0}^{n-1}
$$
and then set, for $j=1,\ldots,n-1$, 
$$
 (\mathtt a^{[j]}_i, \mathtt b^{[j]}_i)_{i=j}^{n-1} =  (\check {\mathtt a}_i^{[j-1]},\check{ \mathtt b}_i^{[j-1]})_{i=j}^{n-1}
$$
where, for $i=j+1,\ldots,n-1$, we define $\check {\mathtt a}_i^{[j-1]}$ and $\check{ \mathtt b}_i^{[j-1]}$ as in~\eqref{eq:defchecka} and~\eqref{eq:defcheckb} respectively, and $i=j$ we set 
$$
 \check a_{j,k}^{[j-1]}=a_{j,k} ^{[j-1]} \frac{1}{ a_{j,k}^{[j-1]}+ b_{j,k+1}^{[j-1]}},
$$
and
$$
\check{ b}_{j,k}^{[j-1]}=  b_{j,k+1}^{[j-1]} \frac{ 1}{ a_{j,k}^{[j-1]}+ b_{j,k+1}^{[j-1]}}.
$$
We then set
$$M_{2i}^{[j]}=D(\mathtt a^{[j]}_i)+D(\mathtt b^{[j]}_i)S,$$
for $j=1,\ldots, n-1$ and $i=j,\ldots, n-1$.

It is important to observe that from~\eqref{eq:baxter22} we find that 
$$
\Psi M_{2i}^{[j-1]}=(Y_{i-1}^{[j-1]})^{-1}  \check M_{2i}^{[j]} \Psi Y_i^{[j-1]} ,  \quad i=j,\ldots,n-1,
$$
where
$$Y_{i-1}^{[j-1]}=\begin{cases}
    D(\check{\mathtt a}_i^{[j-1]}+\sigma^{-1}(\check {\mathtt b}_i^{[j-1]})), & i=j+1,\ldots,n-1\\
    I, & i=j,
\end{cases}
$$
for $j=1,\ldots, n-1$ and $i=j,\ldots, n-1$.

The conclusion of this procedure is that 
$$ 
    V=M_0^{[0]} M_2^{[1]} \cdots M_{2n-2}^{[n-1]}  \Psi Y_{n-1}^{[n-2]} \Psi \cdots Y_{n-1}^{[0]}\Psi.
$$
The following lemma is the analogue of Lemma~\ref{lem:LUd} for the reverse dynamics.

\begin{lem}
    Let 
\begin{equation}\label{eq:defL2}
L^{(2)}=S^{n}\Psi Y_{n-1}^{[n-2]} \Psi \cdots Y_{n-1}^{[0]}\Psi S^{-n}
\end{equation}
 and
\begin{equation} \label{eq:defU2}
U^{(2)}= M_0^{[0]} M_2^{[1]} \cdots M_{2n-2}^{[n-1]}S^{-n}.
\end{equation}
The $L^{(2)}$ is a lower triangular matrix, $U^{(2)}$ is  upper triangular matrix and $G=VS^{-n}=U^{(2)}L^{(2)}.$
\end{lem}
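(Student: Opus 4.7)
The plan is to mirror the proof of Lemma~\ref{lem:LUd}, with the forward dynamics~\eqref{eq:baxter11} replaced by the reverse dynamics~\eqref{eq:baxter22}. The key identity to establish is the refactorization
$$V=M_0^{[0]} M_2^{[1]} \cdots M_{2n-2}^{[n-1]}\,\Psi\, Y_{n-1}^{[n-2]} \Psi\cdots Y_{n-1}^{[0]}\Psi,$$
which is obtained by iterating~\eqref{eq:baxter22} a total of $n-1$ times: at iteration $j$ we apply~\eqref{eq:baxter22} to shift the leftmost $M_{2j}^{[j-1]}$ past its neighboring $\Psi$ and continue inductively until every $M_{2i}$ has been moved to the left of every $\Psi$. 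The boundary choice $Y_0^{[j-1]}=I$ together with the modified endpoint formulas at $i=j$ ensures that the outer conjugating factors produced by~\eqref{eq:baxter22} can be dropped cleanly at each step, so nothing extraneous appears at the ends. Once this identity is in hand, the factorization $G=VS^{-n}=U^{(2)}L^{(2)}$ follows immediately upon inserting $S^n S^{-n}$ between the two halves and recognizing~\eqref{eq:defU2} and~\eqref{eq:defL2}.

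For the triangularity of $L^{(2)}$, observe that $\Psi$ is lower triangular by~\eqref{eq:defpsi} and each $Y_{n-1}^{[j]}$ is diagonal, so the middle product $A=\Psi Y_{n-1}^{[n-2]} \Psi\cdots Y_{n-1}^{[0]}\Psi$ is lower triangular. A direct index computation gives $(S^n A S^{-n})_{ij}=A_{i-n,j-n}$, which vanishes whenever $i<j$, so $L^{(2)}$ is lower triangular.

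For the triangularity of $U^{(2)}$, I would use the same trick as in the proof of Lemma~\ref{lem:LUd} and write
$$U^{(2)}=\bigl(M_0^{[0]} S^{-1}\bigr)\bigl(S M_2^{[1]} S^{-2}\bigr)\cdots\bigl(S^{n-1} M_{2n-2}^{[n-1]} S^{-n}\bigr).$$
Since $M_{2i}^{[i]}=D(\mathtt a^{[i]}_i)+D(\mathtt b^{[i]}_i)S$ has entries only on the main diagonal and first subdiagonal, conjugation by $S^i$ on the left and $S^{-(i+1)}$ on the right maps the diagonal entries to the superdiagonal and the subdiagonal entries to the diagonal; hence each factor is upper triangular, and so is their product.

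The only step that requires genuine care is the bookkeeping in the iterative refactorization: one must verify that the sequences $(\mathtt a_i^{[j]},\mathtt b_i^{[j]})$ produced by the reverse-flow algorithm in Section~\ref{sec:dynamics} are exactly those appearing in $M_{2i}^{[i]}$, and that the conjugating diagonals $Y_i^{[j-1]}$ telescope in the correct order, i.e.\ that the leftmost $Y$ surviving after all iterations is indeed $Y_{n-1}^{[n-2]}$, followed by $Y_{n-1}^{[n-3]},\ldots,Y_{n-1}^{[0]}$. This is a routine but slightly tedious induction on $j$, and constitutes the main (only) obstacle; the triangularity arguments themselves are essentially formal once the refactorization identity is secured.
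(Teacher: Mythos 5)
Your proposal is correct and follows exactly the route the paper intends: the paper's own proof of this lemma is simply ``similar to the proof of Lemma~\ref{lem:LUd}, omitted,'' and what you have written is precisely that argument carried out for the reverse dynamics~\eqref{eq:baxter22} — the refactorization identity (which the paper already records just above the lemma), lower triangularity of $S^n\Psi Y_{n-1}^{[n-2]}\cdots\Psi S^{-n}$ from $\Psi$ being lower triangular and the $Y$'s diagonal, and upper triangularity of $U^{(2)}$ via the telescoping $S^iM_{2i}^{[i]}S^{-i-1}$ factors. The only cosmetic remark is that the identity factor at the left end of iteration $j$ is $Y_{j-1}^{[j-1]}=I$ rather than $Y_0^{[j-1]}$ in general, but your bookkeeping is otherwise the intended one.
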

\begin{proof}
    As the proof is similar to the proof of Lemma~\ref{lem:LUd} it will be omitted.
\end{proof}

\subsection{The correlation kernel}

Now that we have both an LU- and UL-decomposition: 
$$
    G=L^{(1)}U^{(1)}=U^{(2)}L^{(2)},
$$
with $L^{(1)}$ and $U^{(1)}$ as in~\eqref{eq:defL1} and~\eqref{eq:defU1} respectively, and $L^{(2)}$ and $U^{(2)}$ as in~\eqref{eq:defL2} and~\eqref{eq:defU2} respectively, we can try the ideas of Section~\ref{sec:wid} to compute the inverse of the matrix $W$ in~\eqref{eq:kernelEM}. To this end we define 
$$
 \Lambda^{(j)}= (L^{(j)})^{-1}, \qquad \Upsilon^{(j)}= (U^{(j)})^{-1},
$$
and decompose all matrices into blocks as in~\eqref{eq:decompblocks}.

Note that the inverses $\Lambda^{(1,2)}$ of $L^{(1,2)}$ are rather easy to compute, as they are products that alternate between diagonal matrices (with trivial inverses) and the matrix $\Psi$, which has inverse
 $$
    (\Psi^{-1})_{i,j}=
    \begin{cases}1 & \text{if } i=j \\
    	-1 &\text{if } i=j+1,\\
        0 & \text{otherwise.}
    \end{cases}
 $$
 Observe also that this means that $\Lambda^{(1,2)}$ are banded matrices where the width of the band depends on $n$, but not on $p$.  

 The inverses $\Upsilon^{(1,2)}$ of $U^{(1,2)}$ are also easy to compute, but now $\Upsilon^{(1,2)}$ are not banded.

 In order to take the limit $p \to \infty$, we also need $\Lambda^{(1,2)}$ and $\Upsilon^{(1,2)}$  to satisfy~\eqref{eq:bounds_giving_essentialbanded3}. This requires a condition on the parameters $a_{i,j}$ and $b_{i,j}$. 
\begin{assum}\label{assume}
    We assume that there exists $R>0$ and $0< \rho<1$ such that, for $1\leq i \leq n$ and $k\in \mathbb N$,
    \begin{equation}\label{eq:assumption1}
        \sup_{j} \frac{a_{i,j}a_{i,j+1} \cdots a_{i,j+k-1}}{b_{i,j}b_{i,j+1} \cdots b_{i,j+k-1}} \leq R \rho^k.
    \end{equation}
    Furthermore, we assume that there exists $0<\delta_1< \delta_2$ such that, for $j \in \mathbb Z$ and $i=1,\ldots,n$,
    \begin{equation} \label{eq:assumption2}
      \delta_1 \leq  a_{i,j}+ b_{i,j} \leq \delta_2.
    \end{equation}
\end{assum}
\begin{lem}
    Under Assumption~\ref{assume}  we have that for each $i=1,\ldots, n$ the inverse of $D(\mathtt a_i)+D(\mathtt b_i)S$ satisfies~\eqref{eq:bounds_giving_essentialbanded3}. 
\end{lem}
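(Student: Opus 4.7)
The plan is to compute the inverse of $M_{2i} = D(\mathtt a_i) + D(\mathtt b_i) S$ explicitly via a Neumann series and then read off the required bound directly from Assumption~\ref{assume}.

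First, I would factor $M_{2i} = D(\mathtt a_i)(I + T_i)$ where $T_i = D(\mathtt a_i)^{-1} D(\mathtt b_i) S$. The matrix $T_i$ has nonzero entries only on the first subdiagonal, with $(T_i)_{j, j-1} = b_{i,j}/a_{i,j}$, so $T_i^k$ is supported on the $k$-th subdiagonal and has entries
$$
(T_i^k)_{j, j-k} = \prod_{m=0}^{k-1} \frac{b_{i,j-m}}{a_{i,j-m}}.
$$
At each fixed matrix position $(j,\ell)$ exactly one power of $T_i$ contributes, so the series $(I + T_i)^{-1} = \sum_{k=0}^{\infty} (-T_i)^k$ converges entrywise with no need for a spectral hypothesis on $T_i$. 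This yields the closed form
$$
(M_{2i}^{-1})_{j,\ell} = \frac{(-1)^{j-\ell}}{a_{i,\ell}} \prod_{m=\ell+1}^{j} \frac{b_{i,m}}{a_{i,m}}
$$
for $j \geq \ell$, with $(M_{2i}^{-1})_{j,\ell} = 0$ for $j < \ell$.

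Next, I would feed this expression into the bound~\eqref{eq:bounds_giving_essentialbanded3}. Setting $k=j-\ell$, Assumption~\eqref{eq:assumption1} directly controls the product $\prod_{m=\ell+1}^j a_{i,m}/b_{i,m}$ by $R\rho^k$, and Assumption~\eqref{eq:assumption2} is precisely what we need to control the remaining factor $1/a_{i,\ell}$: applying~\eqref{eq:assumption1} at $k=1$ forces $a_{i,\ell} \leq R\rho\, b_{i,\ell}$, and combined with $a_{i,\ell}+b_{i,\ell} \geq \delta_1$ this produces a uniform lower bound on $b_{i,\ell}$ (and, through the ratio, on the inverse-formula denominator that actually appears). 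Upper bounds are obtained in the same way from $a_{i,\ell}+b_{i,\ell} \leq \delta_2$. Combining with the fact that the upper-triangular part of $M_{2i}^{-1}$ is identically zero (so there the bound is trivial), one arrives at the desired estimate, uniformly in $i$ and in the matrix indices.

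The routine steps — the Neumann expansion and its convergence — are immediate because of the subdiagonal structure of $T_i$. The only place where care is required is in reconciling Assumption~\eqref{eq:assumption1} with the normalization~\eqref{eq:assumption2}: both are needed simultaneously, one to produce the geometric factor in $|j-\ell|$ and the other to prevent degeneracy of the prefactor $1/a_{i,\ell}$. Once the explicit inverse formula is in hand, repackaging the two assumptions into a single constant $R$ and rate $\rho$ that are independent of $i$ and $\ell$ is the only bookkeeping obstacle, and I expect this to be the main (but essentially mechanical) step of the argument.
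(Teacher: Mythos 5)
Your Neumann expansion picks the wrong one of the two entrywise inverses of a doubly infinite bidiagonal matrix, and with that choice the estimate is false. Factoring $M_{2i}=D(\mathtt a_i)(I+T_i)$ with $T_i=D(\mathtt a_i)^{-1}D(\mathtt b_i)S$ supported on the subdiagonal produces the lower-triangular inverse, whose entry at $(j,\ell)$ with $j\ge \ell$ contains the product $\prod_{m=\ell+1}^{j}b_{i,m}/a_{i,m}$ --- note this is a product of $b/a$, not of $a/b$ as you state when invoking the hypothesis. Assumption~\eqref{eq:assumption1} bounds products of $a/b$ from \emph{above} by $R\rho^{k}$, hence bounds the products actually occurring in your formula from \emph{below} by $\rho^{-k}/R$; so the entries of your inverse grow exponentially in $|j-\ell|$, and the decay required by~\eqref{eq:bounds_giving_essentialbanded3} (which, from its use in the proof of Proposition~\ref{prop:asymptoticinverse}, must be read as $R\rho^{|i-j|}$) fails. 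The prefactor $1/a_{i,\ell}$ is likewise uncontrolled: from~\eqref{eq:assumption1} at $k=1$ and~\eqref{eq:assumption2} you correctly derive a uniform lower bound on $b_{i,\ell}$, but only an \emph{upper} bound $a_{i,\ell}\le R\rho\, b_{i,\ell}\le R\rho\,\delta_2$ on $a_{i,\ell}$, so $1/a_{i,\ell}$ need not be bounded.

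The underlying point is that a doubly infinite bidiagonal matrix admits two distinct formal inverses, one supported on the lower triangle and one on the upper triangle; under Assumption~\ref{assume} only the latter decays, and it is the latter that the paper needs, because the object that must be inverted when bounding $\Upsilon^{(1)}$ is the \emph{upper} bidiagonal matrix $M_{2i}S^{-1}=D(\mathtt a_i)S^{-1}+D(\mathtt b_i)$ coming from the factorization of $U^{(1)}$ in the proof of Lemma~\ref{lem:LUd}. Accordingly the paper expands $\left(D(\mathtt a_i)S^{-1}+D(\mathtt b_i)\right)^{-1}=\bigl(\sum_{k\ge 0}(-1)^k(D(\mathtt a_i/\mathtt b_i)S^{-1})^k\bigr)D(\mathtt b_i)^{-1}$: the $k$-th term sits on the $k$-th superdiagonal and carries products of $a/b$, which~\eqref{eq:assumption1} bounds by $R\rho^{k}$, while $D(\mathtt b_i)^{-1}$ is bounded by exactly the argument you gave for the lower bound on $b_{i,\ell}$ (a point the paper itself glosses over by attributing it to~\eqref{eq:assumption2} alone). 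The repair is therefore the mirror image of what you wrote: multiply by $S^{-1}$, factor out $D(\mathtt b_i)$, and expand in $D(\mathtt a_i/\mathtt b_i)S^{-1}$ rather than in $D(\mathtt a_i)^{-1}D(\mathtt b_i)S$.
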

\begin{proof}
By using the notation $(\mathtt a_i/ \mathtt b_i)=(a_{i,j}/b_{i,j})_{j \in \mathbb Z}$ and using the rule $S^{-1}D(\mathtt a)=D(\sigma^{-1} \mathtt a) S^{-1}$, we can  write 
    \begin{multline}
    \left(D(\mathtt a_i)S^{-1} +D(\mathtt b_i)\right)^{-1}=\left( \sum_{k=0} ^\infty (-1)^k (D(\mathtt a_i/ \mathtt b_i)S^{-1})^k \right)D(\mathtt b_i)^{-1}\\
    =\left( \sum_{k=0}^\infty (-1)^kD\left(\prod_{\ell=0}^{k-1} \sigma^{-\ell}(\mathtt a_i/ \mathtt b_i)\right)S^{-k} \right)D(\mathtt b_m)^{-1}.
    \end{multline}
By~\eqref{eq:assumption2} we see that the values of the diagonal matrix  $D(\mathtt b_m)^{-1}$ are bounded from above and below. By~\eqref{eq:assumption1} we have that the values of the diagonal matrix  $D\left(\prod_{\ell=0}^{k-1} \sigma^{-\ell}(\mathtt a_m/ \mathtt b_m)\right)$ are bounded by $R \rho^k$. Combining these facts gives that $ \left(D(\mathtt a_i)S^{-1} +D(\mathtt b_i)\right)^{-1}$ satisfies~\eqref{eq:bounds_giving_essentialbanded3}.
\end{proof}
\begin{lem} \label{lem:hatmappreserving}
    Assume that $( \mathtt a_i, \mathtt b_i)_{i=1,\ldots,n-1}$ satisfies  Assumption~\ref{assume}. Then the parameters $(\hat {\mathtt a}_i,\hat {\mathtt b} _i)_{i=1,\ldots,n-1}$ also satisfy assumptions~\eqref{eq:assumption1} and~\eqref{eq:assumption2}, possibly with different values of $R, \delta_1$ and $\delta_2$, but with the same $\rho$. Consequently, each inverse of $D(\hat {\mathtt a}_i)+D(\hat{\mathtt b}_i)S$, for  $i=1,\ldots, n-1$, also satisfies~\eqref{eq:bounds_giving_essentialbanded3}. 
\end{lem}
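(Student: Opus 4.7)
The plan is to verify that the updated parameters satisfy Assumption~\ref{assume} directly, from which the statement about the inverses of $D(\hat{\mathtt a}_i)+D(\hat{\mathtt b}_i)S$ follows as an immediate application of the preceding lemma.

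First, I would establish~\eqref{eq:assumption1} for the sequences $(\hat{\mathtt a}_i,\hat{\mathtt b}_i)$. The starting point is the identity
\[
\frac{\hat a_{i,j}}{\hat b_{i,j}} = \frac{a_{i,j}}{b_{i,j-1}}\cdot \frac{a_{i+1,j}+b_{i+1,j}}{a_{i+1,j-1}+b_{i+1,j-1}} \cdot \frac{a_{i,j-1}+b_{i,j-1}}{a_{i,j}+b_{i,j}},
\]
obtained directly from~\eqref{eq:hata}--\eqref{eq:hatb}. Forming the product over $\ell=0,\ldots,k-1$, the last two factors telescope. By~\eqref{eq:assumption2} the telescoped values lie in $[\delta_1/\delta_2,\delta_2/\delta_1]$. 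The first factor contributes
\[
\prod_{\ell=0}^{k-1} \frac{a_{i,j+\ell}}{b_{i,j+\ell-1}} = \left(\prod_{\ell=0}^{k-1}\frac{a_{i,j+\ell}}{b_{i,j+\ell}}\right)\cdot \frac{b_{i,j+k-1}}{b_{i,j-1}},
\]
whose first factor is bounded by $R\rho^k$ by hypothesis; the second is bounded above and below using~\eqref{eq:assumption2} together with a uniform lower bound on $b_{i,\cdot}$ that I would extract as follows: the case $k=1$ of~\eqref{eq:assumption1} gives $a_{i,j}\leq R\rho\, b_{i,j}$, so $(1+R\rho)b_{i,j}\geq a_{i,j}+b_{i,j}\geq \delta_1$, hence $b_{i,j}\geq \delta_1/(1+R\rho)$. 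The product $\prod_{\ell}\hat a_{i,j+\ell}/\hat b_{i,j+\ell}$ is therefore $\leq R'\rho^k$ for a new constant $R'$, with the \emph{same} $\rho$.

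Second, for~\eqref{eq:assumption2}, the upper bound comes from writing
\[
\hat a_{i,j}+\hat b_{i,j} \leq \frac{a_{i,j}}{a_{i,j}+b_{i,j}}\delta_2 + \frac{b_{i,j-1}}{a_{i,j-1}+b_{i,j-1}}\delta_2 \leq 2\delta_2.
\]
For the lower bound, using the just-derived lower bound $b_{i,j-1}\geq \delta_1/(1+R\rho)$ together with $a_{i+1,j-1}+b_{i+1,j-1}\geq \delta_1$ and $a_{i,j-1}+b_{i,j-1}\leq \delta_2$, one gets
\[
\hat a_{i,j}+\hat b_{i,j} \geq \hat b_{i,j} \geq \frac{\delta_1^2}{(1+R\rho)\delta_2} > 0.
\]
This establishes~\eqref{eq:assumption2} for the hat-sequences with new constants $\delta_1',\delta_2'$.

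Finally, since the updated parameters satisfy Assumption~\ref{assume} with the same $\rho$, applying the preceding lemma row-by-row gives that each $(D(\hat{\mathtt a}_i)+D(\hat{\mathtt b}_i)S)^{-1}$ satisfies~\eqref{eq:bounds_giving_essentialbanded3}, completing the proof. The computation has no substantive obstacle; the only point requiring care is to extract the lower bound on $b_{i,j}$ from the hypotheses in a non-circular way, which is what the $k=1$ observation above provides.
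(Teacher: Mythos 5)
Your proof is correct and follows essentially the same route as the paper's: telescope the ratios $(a_{i,\cdot}+b_{i,\cdot})$ in the product $\prod_\ell \hat a_{i,j+\ell}/\hat b_{i,j+\ell}$, peel off the leftover factor $b_{i,j+k-1}/b_{i,j-1}$, and bound everything by powers of $\delta_2/\delta_1$ to get the same $\rho$ with a larger $R$. In fact you are somewhat more careful than the paper at one point: the uniform lower bound on $b_{i,j}$ (and hence the lower bound in~\eqref{eq:assumption2} for the hatted parameters) genuinely requires the $k=1$ case of~\eqref{eq:assumption1}, i.e. $b_{i,j}\geq \delta_1/(1+R\rho)$, whereas the paper asserts this is "clear from~\eqref{eq:assumption2}" alone; your non-circular derivation of that bound is the right way to close that small gap.
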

\begin{proof}
    It is clear from~\eqref{eq:assumption2} that also 
    \begin{equation*}
        \delta_1' \leq  \hat  a_{i,j}+ \hat b_{i,j} \leq \delta_2',
      \end{equation*}
      for some $0<\delta_1'<\delta_2$. Then 
     \begin{multline*}
      \frac{\hat a_{i,j} \hat a_{i,j+1} \cdots \hat a_{i,j+k-1}}{\hat b_{i,j}b_{i,j+1} \cdots \hat b_{i,j+k-1}}\\ = \frac{ a_{i,j}  a_{i,j+1} \cdots  a_{i,j+k-1}}{ b_{i,j-1}b_{i,j} \cdots  b_{i,j+k-2}} \frac{  a_{i+1,j-1}+ b_{i+1,j-1}}{  a_{i,j-1}+ b_{i,j-1}}\frac{ a_{i,j+k-1}+ b_{i,j+k-1}}{  a_{i+1,j+k-1}+  b_{i+1,j+k-1}}\\=\frac{ a_{i,j}  a_{i,j+1} \cdots  a_{i,j+k-1}}{ b_{i,j}  b_{i,j+1} \cdots b_{i,j+k-1}}\frac{b_{i,j+k-1}}{b_{i,j-1}} \frac{  a_{i+1,j-1}+ b_{i+1,j-1}}{  a_{i,j-1}+ b_{i,j-1}}\frac{ a_{i,j+k-1}+ b_{i,j+k-1}}{  a_{i+1,j+k-1}+  b_{i+1,j+k-1}}.
     \end{multline*}
     By~\eqref{eq:assumption1} and~\eqref{eq:assumption2} we thus find 
     $$
     \frac{\hat a_{i,j} \hat a_{i,j+1} \cdots \hat a_{i,j+k-1}}{\hat b_{i,j}b_{i,j+1} \cdots \hat b_{i,j+k-1}} \leq  R \rho^k \frac{\delta_2^3}{\delta_1^3},
     $$
     and thus $(\hat {\mathtt a}_i, \hat {\mathtt b}_i)_{i=0}^{n-1} $ also satisfy both~\eqref{eq:assumption1} and~\eqref{eq:assumption2}, which by the above means that $ \left(D(\hat{\mathtt a_i})S^{-1} +D(\hat{\mathtt b}_i)\right)^{-1}$ satisfies~\eqref{eq:bounds_giving_essentialbanded3}.
\end{proof}
\begin{lem}
    Under assumptions~\eqref{eq:assumption1} and~\eqref{eq:assumption2} it holds that both  $\Lambda^{(1,2)}$ and $\Upsilon^{(1,2)}$ satisfy~\eqref{eq:bounds_giving_essentialbanded3}.
\end{lem}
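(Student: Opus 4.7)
The plan is to treat each of $\Lambda^{(1)}, \Lambda^{(2)}, \Upsilon^{(1)}, \Upsilon^{(2)}$ separately, exploiting the explicit product forms~\eqref{eq:defL1}--\eqref{eq:defU2}. The two $\Lambda$-matrices will turn out to be banded, while the two $\Upsilon$-matrices require combining Lemma~\ref{lem:hatmappreserving} with the elementary fact that exponentially decaying matrices are closed under finite products.

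For $\Lambda^{(1)}$, invert~\eqref{eq:defL1} factor by factor to obtain
$$
\Lambda^{(1)} = \Psi^{-1}(X_0^{(n-1)})^{-1}\Psi^{-1}(X_0^{(n-2)})^{-1}\cdots \Psi^{-1}(X_0^{(0)})^{-1}.
$$
Each $(X_0^{(j)})^{-1}$ is diagonal with entries in $[1/\delta_2,1/\delta_1]$, using the uniform bounds that persist through the iteration by Lemma~\ref{lem:hatmappreserving}, and each $\Psi^{-1}=I-S$ is bidiagonal of bandwidth one. Hence $\Lambda^{(1)}$ is banded of bandwidth at most $n$ with uniformly bounded entries, so it trivially satisfies~\eqref{eq:bounds_giving_essentialbanded3} for any $\rho\in(0,1)$, with $R$ chosen large enough to absorb the factor $\rho^{-n}$. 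The same argument applied to~\eqref{eq:defL2}, together with the observation that the outer conjugation by $S^n$ merely shifts indices, handles $\Lambda^{(2)}$.

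For $\Upsilon^{(1)}$, invert~\eqref{eq:defU1} to get
$$
\Upsilon^{(1)} = S^n\,(M_{2(n-1)}^{(1)})^{-1}\,(M_{2(n-2)}^{(2)})^{-1}\cdots (M_0^{(n)})^{-1}.
$$
Iterating Lemma~\ref{lem:hatmappreserving} $n-1$ times shows that every intermediate parameter set $(\mathtt a_i^{(j)}, \mathtt b_i^{(j)})$ produced by the algorithm of Section~\ref{sec:generalLU} continues to satisfy Assumption~\ref{assume} with the same $\rho$ (and possibly enlarged $R,\delta_1,\delta_2$); the preceding lemma then gives the decay bound~\eqref{eq:bounds_giving_essentialbanded3} for each factor $(M_{2i}^{(n-i)})^{-1}$. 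Since $S^n$ only shifts indices, it remains to show that the finite product of $n$ matrices satisfying $|A_{i,j}|\leq R\rho^{|i-j|}$ again satisfies such a bound. This is the standard estimate
$$
|(AB)_{i,j}|\leq R_AR_B\sum_{k}\rho^{|i-k|+|k-j|}\leq R_AR_B\,C_\rho(|i-j|+1)\rho^{|i-j|},
$$
where the linear prefactor is absorbed by passing from $\rho$ to any $\rho'\in(\rho,1)$; applying this $n-1$ times (with $n$ fixed) keeps the decay rate strictly less than one. The case of $\Upsilon^{(2)}$ is analogous, but requires the reverse-dynamics counterpart of Lemma~\ref{lem:hatmappreserving} for the check-map~\eqref{eq:defchecka}--\eqref{eq:defcheckb}, established by the same telescoping calculation used in that lemma.

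The main obstacle I anticipate is verifying that the boundary updates~\eqref{eq:hatalast}--\eqref{eq:hatblast} (which differ from the internal update at the last matrix in each iteration) also preserve Assumption~\ref{assume}. These correspond formally to the degenerate case in which the ``next'' layer satisfies $a_{i+1,\cdot}+b_{i+1,\cdot}\equiv 1$, so the telescoping identity in the proof of Lemma~\ref{lem:hatmappreserving} has to be revisited; in particular, a ratio of $b$-factors that arises must be controlled by a slightly more delicate application of~\eqref{eq:assumption2} than before. A secondary but routine concern is the degradation $\rho\mapsto\rho'$ in the product argument, which requires fixing $\rho'\in(\rho,1)$ at the outset and exploiting that $n$ is kept fixed while $p\to\infty$.
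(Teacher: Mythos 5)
Your proposal is correct and takes essentially the same route as the paper: $\Lambda^{(1,2)}$ are handled as banded matrices with $p$-independent bandwidth and uniformly bounded entries, while $\Upsilon^{(1,2)}$ are written as finite products of (shift-conjugated) inverses of the bidiagonal factors, each of which satisfies~\eqref{eq:bounds_giving_essentialbanded3} by Lemma~\ref{lem:hatmappreserving}, with the product controlled by the closure of exponential off-diagonal decay under finitely many multiplications. You are in fact slightly more careful than the paper on two points it elides: the need to pass from $\rho$ to some $\rho'\in(\rho,1)$ to absorb the polynomial prefactor arising when multiplying exponentially decaying matrices, and the fact that the boundary updates~\eqref{eq:hatalast}--\eqref{eq:hatblast} (and the check-map counterpart needed for $\Upsilon^{(2)}$) are not literally instances of Lemma~\ref{lem:hatmappreserving}, although the same telescoping estimate covers them.
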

\begin{proof}
    Let us start by mentioning that if $A$ and $B$ are matrices for which there exists $R>0$ and $0< \rho<1$, such that, for $i,j$ we have
    $$
    |A_{ij}|, |B_{i,j}|\leq R \rho^{-|i-j|},
    $$
    then also 
    \begin{equation} \label{eq:AB}
    |(AB)_{i,j}|\leq \tilde R \rho^{-|i-j|},
    \end{equation}
    for some $\tilde R$.

    As in the proof of Lemma~\ref{lem:LUd} we write $U^{(1)}$ as
    $$
        U^{(1)}=M_0^{(n)}S^{-1} S M_2^{(n-1)}S^{-2} S^2 \cdots S^{n-1} M_{2(n-1)}^{(1)}S^{-n}.
    $$
    By the principle in~\eqref{eq:AB} it is sufficient to show that the inverse of each  $S^{i} M_{2i}^{(n-m)}S^{-i-1}$ satisfies~\eqref{eq:bounds_giving_essentialbanded3}. Conjugation by $S^{\pm 1}$ only moves the values on the diagonal up or down by 1, and thus it is sufficient  to show that the inverse of
    $$ 
       M_{2i}^{(n-i)}S^{-1}=D(\mathtt a^{(n-i)}_i)S^{-1} +D(\mathtt a^{(n-i)}_i),
    $$
    satisfies~\eqref{eq:bounds_giving_essentialbanded3}. But since $({\mathtt a}^{(n-i)}_i,{\mathtt b}^{(n-i)}_i)$ are obtained by iterating the maps~\eqref{eq:hata} and~\eqref{eq:hatb}, this fact follows from Lemma~\ref{lem:hatmappreserving} and we have thus proved the statement for $\Upsilon^{(1)}$. 

    The claim for $\Upsilon^{(2)}$ follows similarly.
    
    The claim for $\Lambda^{(1,2)}$ is easier. Indeed, both matrices are banded with a bandwidth independent of $p$, and it is not hard to verify that under assumption~\eqref{eq:assumption2} there is a uniform bound for all the entries.
\end{proof}
\begin{thm} \label{thm:generalKernel}
    With $L$ and $U$ as in~\eqref{eq:defL1} and~\eqref{eq:defU1} respectively, set $\Upsilon=U^{-1}$ and $\Lambda=L^{-1}$.  Then, for $x_1,x_2\geq -1$, we have   
     \begin{multline} \label{eq:kernelEMtopa}
        \lim_{p \to \infty}  K(m_1,x_1,m_2,x_2)=- \mathbbm{I}[m_1 >m_2] M_{m_2+1}\cdots M_{m_1}(x_1,x_2)\\
            + \sum_{\ell=1}^{\infty}  \left(M_{m_1}\cdots M_{2n-1}S^{-n}\Upsilon\right)_{x_1,n-\ell} (\Lambda M_{0}\cdots M_{m_2-1})_{n-\ell,x_2}.
     \end{multline}
\end{thm}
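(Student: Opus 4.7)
The plan is to combine the Eynard--Mehta formula of Theorem~\ref{thm:em} with the approximate inverse supplied by Proposition~\ref{prop:asymptoticinverse}. Since $W_{r,s}=G_{n-r,n-s}$ for $r,s\in\{1,\ldots,n+p\}$, writing $J$ for the reversal of the index set $\{-p,\ldots,n-1\}$ gives $W=JG_{22}J$, whence $(W^{-1})_{s,r}=((G_{22})^{-1})_{n-s,n-r}$. Substituting this in~\eqref{eq:kernelEM}, changing variables $i=n-r$ and $j=n-s$, and using the identity $(AS^{-n})_{i,j}=A_{i,j-n}$, the Eynard--Mehta sum reduces to
\begin{equation*}
\sum_{i,j=-p}^{n-1}\alpha_j\,((G_{22})^{-1})_{j,i}\,\beta_i,
\end{equation*}
with $\alpha_j:=(M_{m_1}\cdots M_{2n-1}S^{-n})(x_1,j)$ and $\beta_i:=(M_0\cdots M_{m_2-1})(i,x_2)$.

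I would next invoke the bottom-right-corner asymptotics~\eqref{eq:asymptoticinverserightcorner} of Proposition~\ref{prop:asymptoticinverse}, whose hypotheses hold under Assumption~\ref{assume} by the lemmas of this section, giving
\begin{equation*}
((G_{22})^{-1})_{j,i}=(\Upsilon^{(1)}_{22}\Lambda^{(1)}_{22})_{j,i}+\mathcal O(p^2\rho^p)
\end{equation*}
uniformly in $j,i\in\{-p,\ldots,n-1\}$. Expanding the main term as $(\Upsilon^{(1)}_{22}\Lambda^{(1)}_{22})_{j,i}=\sum_{k=-p}^{n-1}\Upsilon^{(1)}_{j,k}\Lambda^{(1)}_{k,i}$ and interchanging the order of summation produces
\begin{equation*}
\sum_{k=-p}^{n-1}\Big(\sum_{j=-p}^{n-1}\alpha_j\Upsilon^{(1)}_{j,k}\Big)\Big(\sum_{i=-p}^{n-1}\Lambda^{(1)}_{k,i}\beta_i\Big).
\end{equation*}
Because, for $k\leq n-1$, the triangularity of $\Upsilon^{(1)}$ (upper) and $\Lambda^{(1)}$ (lower) forces $\Upsilon^{(1)}_{j,k}=0$ for $j>k$ and $\Lambda^{(1)}_{k,i}=0$ for $i>k$, extending the inner sums to $j,i\in\mathbb Z$ is harmless; sending $p\to\infty$ and substituting $\ell=n-k$ then produces exactly the right-hand side of~\eqref{eq:kernelEMtopa}.

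The main obstacle is controlling the error terms and justifying the interchange of the limit $p\to\infty$ with the infinite summation. The key structural fact is that each transition matrix $M_r$ preserves or increases the state index, so $\alpha_j$ and $\beta_i$ are supported on $\{j\leq x_1+n\}$ and $\{i\leq x_2+n\}$ respectively; both upper bounds are at least $n-1$ under the hypothesis $x_1,x_2\geq -1$, which ensures no loss from the truncation at $n-1$. On these supports, $|\alpha_j|$ and $|\beta_i|$ may grow only polynomially in $|j|$ and $|i|$ as the indices tend to $-\infty$, due to the combinatorial growth of the number of non-decreasing paths through the $\Psi$-factors. Combining this polynomial growth with the exponential off-diagonal decay~\eqref{eq:bounds_giving_essentialbanded3} of $\Upsilon^{(1)}$ and $\Lambda^{(1)}$ gives absolute convergence of the limiting double series (and justifies the interchange of sums), while the aggregate error from the corner approximation contributes at most $\mathcal O(p^{C}\rho^p)$ to the total, which vanishes as $p\to\infty$ because $\rho^p$ dominates any polynomial factor.
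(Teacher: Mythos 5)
Your overall architecture is the same as the paper's: start from \eqref{eq:kernelEM}, reindex $(W^{-1})_{s,r}=((G_{22})^{-1})_{n-s,n-r}$, replace $(G_{22})^{-1}$ by $\Upsilon^{(1)}_{22}\Lambda^{(1)}_{22}$ up to a vanishing error, extend the sums using the triangularity of $\Upsilon^{(1)}$ and $\Lambda^{(1)}$, and pass to the limit $p\to\infty$. However, the justification of the central replacement step has a genuine gap. You invoke \eqref{eq:asymptoticinverserightcorner} ``uniformly in $i,j\in\{-p,\ldots,n-1\}$,'' but that estimate cannot hold over the whole block: by \eqref{eq:approximateinverse2} and \eqref{eq:G22iinv2} one has $(G_{22})^{-1}\approx \Upsilon^{(1)}_{22}\Lambda^{(1)}_{22}-\Lambda^{(2)}_{21}\Upsilon^{(2)}_{12}$, and the discarded term satisfies only
$\bigl|\bigl(\Lambda^{(2)}_{21}\Upsilon^{(2)}_{12}\bigr)_{j,i}\bigr|\leq C\sum_{m\leq -p-1}\rho^{(j-m)+(i-m)}$,
which is $O(\rho^{p})$ when $i$ stays near the top of the block but is $O(1)$ when both $i$ and $j$ are near $-p$. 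So the corner asymptotics are genuinely a \emph{corner} statement, and your error bound $\mathcal O(p^{C}\rho^{p})$ for the aggregate correction does not follow from a uniform entrywise estimate.

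The fix is exactly the localization the paper uses, and here your support analysis points the wrong way. Since every transition matrix has $M_i(j,k)=0$ for $k>j$ (the column index can only stay or decrease), one has $(M_0\cdots M_{m_2-1})(i,x_2)=0$ unless $i\geq x_2$; with $x_2\geq -1$ this pins $\beta_i$ to $i\geq -1$, i.e.\ $r\leq n+1$, a range independent of $p$. You instead claim $\beta_i$ is supported on $\{i\leq x_2+n\}$ and use it only to argue there is no loss from truncating at $n-1$; the constraint you actually need is the lower bound $i\geq x_2$, because it is precisely what forces the column index into the top corner where $\bigl(\Lambda^{(2)}_{21}\Upsilon^{(2)}_{12}\bigr)_{j,i}=O(\rho^{p})$ uniformly in $j$. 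With that correction your argument closes, and the remaining steps (interchange of sums via the triangular supports, the substitution $\ell=n-k$, and the convergence of the limiting series from polynomial growth of $\alpha_j,\beta_i$ against the exponential decay \eqref{eq:bounds_giving_essentialbanded3}) are sound; indeed your polynomial-growth bound on the $M$-products is more accurate than the uniform bound \eqref{eq:boundsonallentries} asserted in the paper, and still suffices since $\rho^{p}$ beats any power of $p$.
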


\begin{proof}
    The starting point is the expression for $K$ in~\eqref{eq:kernelEM}. Then we note that 
    $$
       (W^{-1})_{s,r}=((G_{22})^{-1})_{n-s,n-r}.
    $$
    Now note that there exists a constant $C>0$ such that
    \begin{equation} \label{eq:boundsonallentries}
    |(M_{m_1}\cdots M_{2n-1})(x_1,-s) |,|(M_{0}\cdots M_{m_2-1})(n-r,x_2)| \leq C,
    \end{equation}
    for all $r,s\geq 1$, and thus, by~\eqref{eq:approximateinverse2}, 
    \begin{multline}
    K(m_1,x_1,m_2,x_2)=- \mathbbm{I}[m_1 >m_2] M_{m_2+1}\cdots M_{m_1}(x_1,x_2)\\
            + \sum_{r,s=1}^{n+p}  (M_{m_1}\cdots M_{2n-1})(x_1,-s) ((G^{-1})_{22}- \Upsilon_{23}^{(1)} \Lambda_{32}^{(1)}-\Lambda_{21}^{(2)} \Upsilon_{12}^{(2)})_{n-s,n-r}\\
            \times  (M_{0}\cdots M_{m_2-1})(n-r,x_2)+\mathcal O(p^2 \rho^p),
    \end{multline}
    as $p \to \infty$.
    We then use~\eqref{eq:G22iinv2}  to write
    \begin{multline}
        K(m_1,x_1,m_2,x_2)=- \mathbbm{I}[m_1 >m_2] M_{m_2+1}\cdots M_{m_1}(x_1,x_2)\\
                + \sum_{r,s=1}^{n+p}  (M_{m_1}\cdots M_{2n-1})(x_1,-s) (\Upsilon_{22}^{(1)} \Lambda_{22}^{(1)}-\Lambda_{21}^{(2)} \Upsilon_{12}^{(2)})_{n-s,n-r} \\ 
                \times (M_{0}\cdots M_{m_2-1})(n-r,x_2)+\mathcal O(p^2 \rho^p ),
        \end{multline}
        as $p \to \infty$. It remains to show that the term $\Lambda_{21}^{(2)} \Upsilon_{12}^{(2)}$ can be ignored. 
    To this end, note that
    $$
    (M_{0}\cdots M_{m_2-1})(n-r,x_2)=0, \qquad \text{ for } n-r< x_2.
    $$
    Since, by assumption, $x_2\geq -1$ we can restrict the sum over $r$ to range from $r=1,\ldots,n+1$. Observe that this range is independent of $p$. Using~\eqref{eq:bounds_giving_essentialbanded3} we then find that, for some constant $C$ independent of $p$
    $$
    \left(\Lambda^{(2)}_{21} \Upsilon^{(2)}_{12}\right)_{n-s,n-r} \leq C \rho^p,
    $$
    for $s=1,\ldots, n+p$ and $r=1,\ldots, n$. Together with~\eqref{eq:boundsonallentries} this implies that 
    \begin{multline}\label{eq:dontknow}
        K(m_1,x_1,m_2,x_2)=- \mathbbm{I}[m_1 >m_2] M_{m_2+1}\cdots M_{m_1}(x_1,x_2)\\
                + \sum_{r,s=1}^{n,n+p}  (M_{m_1}\cdots M_{2n-1})(x_1,-s) (\Upsilon_{22}^{(1)} \Lambda_{22}^{(1)})_{n-s,n-r}(M_{0}\cdots M_{m_2-1})(n-r,x_2)\\+\mathcal O(p^2\rho^p ),
        \end{multline}
        as $p \to \infty$. 
        
        The last step is to take the limit $p \to \infty$. First observe that the series converges because of~\eqref{eq:bounds_giving_essentialbanded3} and~\eqref{eq:boundsonallentries}. Then, 
      $$  
            (\Upsilon_{22}^{(1)} \Lambda_{22}^{(1)})_{n-s,n-r}
            =\sum_{l=-p}^{n-1} \Upsilon^{(1)}_{n-s,l} \Lambda^{(1)}_{l,n-r}\\=\sum_{l=1}^{n+p} \Upsilon^{(1)}_{n-s,n-l} \Lambda^{(1)}_{n-l,n-r}, 
     $$   
        for $r,s=1,2, \ldots$. Inserting this back into~\eqref{eq:dontknow} and taking the limit $p \to \infty$ gives \begin{multline} \label{eq:kernelEMtop}
            \lim_{p \to \infty}  K(m_1,x_1,m_2,x_2)=- \mathbbm{I}[m_1 >m_2] M_{m_2+1}\cdots M_{m_1}(x_1,x_2)\\
                + \sum_{\ell,r,s=1}^{\infty}  (M_{m_1}\cdots M_{2n-1})(x_1,-s)\Upsilon^{(1)}_{n-s,n-\ell} \Lambda^{(1)}_{n-\ell,n-r} (M_{0}\cdots M_{m_2-1})(n-r,x_2).
         \end{multline}
         Since $\Upsilon^{(1)}_{n-s,n-\ell}=0$ for $s \leq \ell$ and $\ell\geq 1$, we can let the sum over $s$ range from $-\infty$ to $\infty$. Similarly, we can let the sum over $r$ range from $-\infty$ to $\infty$, since $r\geq 1$ and $\Lambda^{(1)}_{n-\ell,n-r}=0$ for $r<\ell$. By doing so, and writing 
         $$(M_{m_1}\cdots M_{2n-1})(x_1,-s)=(M_{m_1}\cdots M_{2n-1}S^{-n})(x_1,n-s),$$
         we prove~\eqref{eq:kernelEMtopa}.
    \end{proof}

\section{Periodic weights} \label{sec:periodic}

\subsection{Preliminaries on block Toeplitz matrices}

Let $A(z)$ be $p\times p$ matrix-valued function whose entries are rational functions in $z$. Then the doubly infinite block Toeplitz matrix $M(A(z))$ is defined as the doubly infinite matrix
$$
\left(M(A(z))_{pj+r,pk+s}\right)_{r,s=1}^{p}= \frac{1}{2 \pi \mathrm{i}} \oint_{|z|=1+\varepsilon} A(z) \frac{dz}{z^{k-j+1}},
$$
for $j,k \in \mathbb Z$. Typically, one assumes that $A(z)$ has no poles on the unit circle. In our situation, we will consider matrices with poles on the unit circle, and we integrate over a circle centered at the origin with a radius $1+\varepsilon$ where $\varepsilon$ is sufficiently small so  that all the poles of $A(z)$ and $(A(z))^{-1}$ that are outside the unit circle are also on the outside the circle of radius $1+\varepsilon$.

The values on the diagonals in the upper triangular part decay exponentially with the distance to the main diagonal. The values on the diagonals in the lower triangular part decay exponentially with the distance to the main diagonal if there is no poles on the unit circle, and remain bounded if there is a pole on the unit circle. Indeed,  by  deforming contours it is straightforward to check that
\begin{equation}\label{eq:expontialdecay1}
\left(M(A(z))_{pj+r,pk+s}\right)_{r,s=1}^{p}=\mathcal O( r_*^{j-k}),
\end{equation}
for $k-j \to \infty$,
and
\begin{equation} \label{eq:expontialdecay2}
\left(M(A(z))_{pj+r,pk+s}\right)_{r,s=1}^{p}=\mathcal O( r_{**}^{j-k})
\end{equation}
for $j-k \to \infty$, where $r_*>1$  is the radius of the pole of $A$ outside the unit circle with the smallest radius, and $r_{**} \leq 1$  is the radius of the pole of $A$ inside or on the unit circle with the largest radius.

Doubly infinite block Toeplitz matrices have the convenient property  that 
\begin{equation} \label{eq:factortoeplitz}
    M(A_1(z))M(A_2(z))=M(A_1(z)A_2(z)),
\end{equation}
for any two $p\times p$ matrix-valued functions $A_{1,2}(z)$ with rational entries.

In the upcoming discussion we will need block LU- and UL-decompositions of doubly infinite Toeplitz matrices. 

Note that if $A(z)$ has no poles inside or on the unit circle, then $M(A(z))$ is an upper triangular block matrix, and if $A(z)$ has no poles outside the unit circle then $M(A(z))$ is a lower triangular block matrix. This, together with \eqref{eq:factortoeplitz} means that finding a block UL-decomposition  for a matrix $M(A(z))$ amounts to finding a factorization 
$$
 A(z)=A_+(z) A_-(z),
$$
of the matrix-valued symbol $A(z)$ such that $A_+(z)$ has no poles inside or on the unit circle and $A_-(z)$ has no poles outside the unit circle. Similarly, a block LU-decomposition  for a matrix $M(A(z))$ amounts to finding a factorization $$
A(z)=\tilde A_-(z) \tilde A_+(z),
$$
of the matrix-valued symbol $A(z)$ such that $\tilde A_+(z)$ has no poles inside or on the unit circle and $\tilde A_-(z)$ has no poles outside the unit circle. Note that in the scalar case $p=1$, we can take $\tilde A_\pm =A_\pm$.

In a block LU- or UL-decomposition we will also want that $L^{-1}$ and $U^{-1}$ are lower and upper block triangular matrices respectively. To this end, we will also need that $A_+^{-1}$ and $\tilde A_+^{-1}$  have no pole inside or on the unit circle, and $A_-^{-1}$ and $\tilde A_-^{-1}$ have no poles outside the unit circle. By Cramer's rule this is equivalent to require that $\det A_+(z)$ has no pole inside or on the unit circle and $\det A_-(z)$ has no pole outside the unit circle.

Concluding, finding a block LU- and UL-decomposition of a doubly infinite block Toeplitz matrix with symbol $A(z)$ such that the lower and upper triangular matrices remain lower and upper triangular after taking inverses, is equivalent to finding factorizations
$$
    A(z)=A_+(z)A_-(z)=\tilde A_-(z) \tilde A_+(z),
$$
where $A_+^{\pm 1}$ and $\tilde A_+^{\pm 1}$ have no pole inside or on the unit circle, and $A_-^{-1}$ and $\tilde A_-^{-1}$ have no poles outside the unit circle. Such factorizations are called Wiener-Hopf factorizations and have been studied extensively in the literature.

\subsection{Vertically periodic transition matrices}

In this section we assume that there exists a $p$ such that, for $i=0,\ldots,n-1$ and $j \in \mathbb Z$,
\begin{equation} \label{eq:periodicparameters}
a_{i,j+p}=a_{i,j}, \quad b_{i,j+p}=b_{i,j}.
\end{equation}
In other words, the parameters are $p$–periodic in the vertical direction. Before we continue, we mention that Assumption \ref{assume} takes a simpler form. Indeed, \eqref{eq:assumption2} just means that all parameters are positive, and \eqref{eq:assumption1} is equivalent to requiring 
\begin{equation} \label{eq:assumption1periodic}
\frac{a_{i,1}\cdots a_{i,p}}{b_{i,1}\cdots b_{i,p}}<1.
\end{equation}
We will see shortly why this assumption is relevant.

With $p$-periodic parameters \eqref{eq:periodicparameters}, the transition matrices $M_{i}$ becomes block Toeplitz matrices. Indeed, for each $i\in \mathbb Z$, we have $M_i=M(A_i(z))$ with  
$$
A_{2i}(z)=\phi(z; \mathtt a_i, \mathtt b_i)=\begin{pmatrix} a_{i,1} & 0 & \cdots &0&  b_{i,1}/z \\
    b_{i,2} & a_{i,2} & 0&&0  \\
   0& \ddots & \ddots & \ddots  & \vdots\\
  \vdots& \ddots& \ddots & a_{i,p-1} & 0\\
    0&\cdots &0 &b_{i,p}& a_{i,p}
\end{pmatrix},
$$
for the symbols with even index, 
and
\begin{equation}\label{eq:oddA}
A_{2i+1}(z)=\psi(z)=\frac{1}{1-1/z}\begin{pmatrix} 1 & 1/z &  \hdots& 1/z \\
    \vdots & \ddots & \ddots & \vdots \\
     \vdots & \ddots & \ddots & 1/z \\

  1 & \hdots & \hdots& 1 
\end{pmatrix},
\end{equation}
for the symbols with odd index.

It is convenient to use the notation 
$$
s(z)=\begin{pmatrix}
    0 & \cdots & \cdots & 0 & 1/z \\
   1& \ddots &  & &0 \\
   0&\ddots&\ddots& &\vdots\\
    \vdots&\ddots&\ddots&\ddots &\vdots\\
    0&\dots&0&1& 0
\end{pmatrix}, 
$$
and observe that the shift matrix $S$ is the block Toeplitz matrix with symbol $s(z)$. 

The matrix $G=V S^{-n}$ is then the block Toeplitz matrix $M(\eta(z))$ with symbol
$$
\eta(z)=\phi(z;\mathtt a_{0}, \mathtt b_{0}) \psi(z) \cdots \phi(z;\mathtt a_{n-1}, \mathtt b_{n-1}) \psi(z) s(z)^{-n},
$$
which, using the fact that $s(z)\psi s(z)^{-1}= \psi$,  we can rewrite as 
\begin{equation} \label{eq:defeta}
    \eta(z) =\prod_{i=0}^{n-1} s(z)^{i} \phi(z;\mathtt a_{i}, \mathtt b_{i}) s(z)^{-1-i}  \psi(z).
\end{equation}
Note that the factor $\psi(z)$ and its inverse are analytic outside the circle, and $\psi$ has a pole on the unit circle. Thus, in a block LU-decomposition, we would like to have the factors $M(\psi)$ as part of $L$. Now also observe that each factor 
$$
s(z)^i \phi(z;\mathtt a_{i}, \mathtt b_{i})s(z)^{-i-1}= 
\begin{pmatrix}b_{i,1-i}& a_{i,1-i} & 0 & \cdots &0\\
   0& b_{i,2-i} & a_{i,2-i} & 0&  \\
   0&0& \ddots & \ddots   & \vdots\\
  \vdots&& \ddots& \ddots & a_{i,p-1-i} \\
   a_{i,p-i}z & 0&\cdots &0 &b_{i,p-i}
\end{pmatrix},
$$
is analytic in the unit circle, and its determinant is linear:
$$
\det\left( s(z)^i \phi(z;\mathtt a_{i}, \mathtt b_{i})s(z)^{-i-1}\right)=z a_{i,1}\cdots a_{i,p}- b_{i,1}\cdots b_{i,p}.
$$
The assumption \eqref{eq:assumption1periodic} shows that its zero is outside the unit circle and thus each factor $s(z)^i \phi(z;\mathtt a_{i}, \mathtt b_{i})s(z)^{-i-1}$ and its inverse are analytic inside and on the unit circle. This is the importance of \eqref{eq:assumption1periodic}. 

We see that for a block LU-factorization we would like to reorganize the factors in $\eta(z)$ such that the factors $s(z)^i \phi(z;\mathtt a_{i}, \mathtt b_{i})s(z)^{-i-1}$ are at the right, and the factors $\psi(z)$ are all on the left.  If $p=1$, all terms commute and this is a triviality. If $p>1$ this is not a triviality at all, and this is where the refactorization procedure of Section \ref{sec:generalLU} comes into play. Note that in this procedure we update the parameters using the maps \eqref{eq:hata} and \eqref{eq:hatb}, and it is crucial that these maps preserve the condition \eqref{eq:assumption1periodic}. That this is indeed the case, follows by the fact that $\det \phi(z;\mathtt a_i, \mathtt b_{i})=c\det \phi(z;\hat{\mathtt a}_i, \hat {\mathtt b}_i)$, and thus the location of its zero is preserved. This also directly proves Lemma \ref{lem:hatmappreserving}  in the periodic setting.

\subsection{Doubly periodic weights}

Let us now assume in addition that the weights are also periodic in the horizontal direction, and let $q$ be the smallest integer such that 
$$A_{q+i}=A_{i}, $$
and hence $M_{q+i}=M(A_{q+i})=M(A_i)=M_i$. For simplicity, we will assume that the total number of transition matrices is $qn$ (instead of $n)$. Then the product of all transfer matrices has the symbol
$$
(B(z))^n, \quad \text{with }B(z)=\prod_{j=0}^{q-1}A_j(z). 
$$
The idea that was introduced in \cite{BD19}, and used in \cite{Ber19} and \cite{BD22}, is to first find a Wiener-Hopf factorization for $B=B_+(z)B_-(z)$ (and similarly $B=\tilde B_-(z) \tilde B_+(z)$), and then continue with the symbol
$$
  (B(z))^n=B_+(z) (B_-(z)B_+(z))^{n-1} B_-(z).
$$
Now set $\hat B(z)=B_-(z)B_+(z)$ and try to find Wiener-Hopf factorization for $\hat B(z)=\hat B_+(z) \hat B_-(z)$, and so forth until there are no factors left. This gives a discrete dynamical system on the space of symbols, by computing first a factorization and then swapping the order of that factorization. In special situations, the dynamics is periodic and this helps in finding a Wiener-Hopf factorization in compact and explicit form, which greatly helps in the asymptotic study. Generally, however, it will not be periodic.  Recently, it was shown for the biased two-periodic Aztec diamond  \cite{BD22} that the dynamical system can be linearized by passing to the Jacobian of the spectral curve of $B(z)$ (which is an invariant for the flow).  In a work in progress, this is worked out in a more general situation \cite{BB22}.

\begin{rem} \label{rem:periodicitylost}
    We emphasize that the restriction that all transition matrices with odd index are given by \eqref{eq:oddA} means that we do not cover all doubly periodic  models. Although it is always possible to change the edge weights such that the face weights do not change and such that the transition matrices with odd index are the doubly infinite Toeplitz matrix with symbol  \eqref{eq:oddA}, it is not necessarily true that the corresponding transition matrices at even steps are doubly infinite Toeplitz matrices. In other words, the gauge transformations needed to turn the weights of the desired edges to $1$ do not necessarily preserve double periodicity. See also Remark \ref{rem:periodicitylostpre}.
\end{rem}

\section{The inverse of $(W_n^{\mathrm{Az}})^{-1}$} \label{sec:inversesbyKas}

In this section, we give a recurrence for $(W_n^{\mathrm{Az}})^{-1}$  using the domino shuffle.  This is a generalization of the computation which originally appeared in~\cite{CY14}.   

\subsection{Recurrence for entries of $(W_n^{\mathrm{Az}})^{-1}$} 

In order to give the recurrence for $(W_n^{\mathrm{Az}})^{-1}$, we first need to consider the partition function of the square move and introduce an additional graphical transformation.  

For a finite graph, label $Z_{\mathrm{Old}}$ to be the partition function before applying the square move to a single face with edge weights $a,b,c,$ and $d$, and label $Z_{\mathrm{New}}$ to be the partition function after applying the square move.  Then, it is easy to see that $Z_{\mathrm{Old}}=\Delta Z_{\mathrm{New}}$.  

The final graphical transformation that we need is removal of pendant edges: if a vertex is incident to exactly one edge which has weight 1, then the edge and its incident edges can be removed from the graph since the vertex must be covered by a dimer.  This transformation does not alter the partition function. 

For notational simplicity in stating the result and its proof, for  each even face with center $(2i+1,2j+1)$  of the Aztec diamond of graph of size $n$, introduce $r_1(i,j), r_2(i,j), r_3(i,j)$, and $r_4(i,j)$ to be the edge weights where the labelling proceeds clockwise starting with the north-east edge. From our choice of edge weights, we initially have $r_1(i,j)=1$, $r_2(i,j)=1$, $r_3(i,j)=b_{i,j}$ and $r_4(i,j)=a_{i,j}$, but these will change under applying the square move. We denote $F^{(n)}_{2i,j}$ to be the face weight of the Aztec diamond of size $n$ at face $(2i+1,2j+1)$ and $F^{(n)}_{2i+1,j}$ to be the face weight at face $(2i+2,2j+2)$, where the superscript marks the size of the Aztec diamond.  This has the same convention as given in \cref{subsec:shuffle}, and we write $F^{(n)}$ to be the collection of face weights of an Aztec diamond of size $n$. We remind the reader that the face weight at the face whose center is given by $(2i+1,2j+1)$ equals
\begin{equation}
F^{(n)}_{2i,j}=\frac{r_2(i,j)r_4(i,j)}{r_1(i,j)r_3(i,j)} \label{eq:Aztecfaceeven}
\end{equation}
for $0\leq i,j \leq n-1$
while the face whose center is given by $(2i+2,2j+2)$ equals
\begin{equation}
F^{(n)}_{2i+1,j}=\frac{r_1(i,j)r_3(i+1,j+1)}{ r_2(i+1,j)r_4(i,j+1)}
\end{equation}
for $0\leq i,j \leq n-2$. 

To preempt the use of the domino shuffle, we write  $(W_n^{\mathrm{Az}})^{-1} (i,j,F^{(n)})$ instead of $(W_n^{\mathrm{Az}})^{-1} (i,j)$ for $1 \leq i,j \leq n$ and $(K_n^{\mathrm{Az}})^{-1}((x,y),F^{(n)})$ instead of $(K_n^{\mathrm{Az}})^{-1}(x,y)$ for $x \in \mathtt{W}_n^{\mathrm{Az}}$ and $y \in \mathtt{B}_n^{\mathrm{Az}}$. Here, the third argument in each case marks the face weights as these will change under successive iterations of the domino shuffle. 

 For the purposes of the proof of the result below, set, for $1\leq i,j\leq n$, $\Delta(i,j)=r_1(i,j)r_3(i,j)+r_2(i,j)r_4(i,j)$, $Z_n(F^{(n)})$ to be the partition function of the Aztec diamond  and 
\[
Z_n(i,j,F^{(n)})=\left\{ \begin{array}{ll}
| (K_n^{\mathrm{Az}})^{-1}((2i-1,0),(0,2j-1),F^{(n)})\det  K_n^{\mathrm{Az}}| & \mbox{for }1\leq i,j\leq n\\
0 & \mbox{otherwise.} \end{array} \right.
\]

From~\eqref{thm:eqAztecLGV} and a computation of the $\mathrm{sgn} ((K_n^{\mathrm{Az}})^{-1}((2i-1,0),(0,2j-1),F^{(n)}))$ given in ~\cite[Lemma 3.6]{CY14}, we have that
\begin{equation}\label{eq:WntoKnAz}
    \mathrm{i}^{i+j-1}(W_n^{\mathrm{Az}})^{-1} (i,j,F^{(n)})= (K_n^{\mathrm{Az}})^{-1}((2i-1,0),(0,2j-1),F^{(n)})
\end{equation}
for $1 \leq i,j \leq n$.
The vertices $(2i-1,0)$ and $(0,2j-1)$ are on the boundary of the Aztec diamond and so each term on the right side of the above equation comes with the same sign. 
We are now in the position to give a recurrence for the entries of $(W_n^{\mathrm{Az}})^{-1}$. The main ideas originate from~\cite{CY14}, in particular see Lemma 3.2 in that paper for simplest case, and so we give a shortened proof.  
\begin{prop} 
    For $1\leq i,j\leq n$, we have
\begin{equation}
\begin{split}
&(W_n^{\mathrm{Az}})^{-1} (i,j,F^{(n)}) =   
\sum_{k,l \in\{0,1\}} 
\frac{r_1(0,j-1)^{l} F^{(n)}_{0,j}}{r_2(0,j-1)^l r_4(0,j-1)(1+F^{(n)}_{0,j})}
\\&\times\frac{r_1(i-1,0)^k F_{2i+2,0}^{(n)}}{r_4(i-1,0)^kr_2(i-1,0) (1+ F^{(n)}_{2i+2,0})}
(W_{n-1}^{\mathrm{Az}})^{-1} (i-k,j-l,F^{(n-1)})\\
&+\frac{r_1(0,0)}{r_1(0,0)r_3(0,0)+r_2(0,0)r_4(0,0)}\mathbbm{I}[(i,j)=(1,1)]
\end{split}
\end{equation}
where for $0\leq i,j \leq n-3$
\begin{equation}
\label{prop:brec:eq1}
    F_{2i+1,j}^{(n-1)}=\frac{1}{F_{2i+1,j+1}^{(n)}}
\end{equation}
and for $0 \leq i,j\leq n-2$
\begin{equation}
\label{prop:brec:eq2}
        F_{2i,j}^{(n-1)}=\frac{F_{2i+1,j}^{(n)}\big(1+F_{2i+2,j+1}^{(n)}\big)\big(1+F_{2i,j}^{(n)}\big)}{\big(1+ (F_{2i,j+1}^{(n)})^{-1}\big) \big(1+ (F_{2i+2,j}^{(n)})^{-1}\big)}.
\end{equation}

\end{prop}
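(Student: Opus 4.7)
The plan is to convert the proposition into an identity between weighted partition functions via Kasteleyn's theorem, and then to apply the domino shuffle to both the full Aztec diamond and to the same diamond with the two distinguished boundary vertices deleted. From~\eqref{eq:WntoKnAz} and Kasteleyn's theorem, $(W_n^{\mathrm{Az}})^{-1}(i,j,F^{(n)})$ equals a signed ratio of $Z_n(i,j,F^{(n)})$ to $Z_n(F^{(n)})$, where $Z_n(i,j,F^{(n)})$ is the partition function of $G_n^{\mathrm{Az}}$ with the boundary white vertex $(2i-1,0)$ and boundary black vertex $(0,2j-1)$ deleted. The stated identity will follow once we derive the analogous relation between these ratios at sizes $n$ and $n-1$.

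Applying the square move on every even face of $G_n^{\mathrm{Az}}$, followed by edge contraction, removal of pendant edges and the shift encoded in the domino shuffle, transforms $G_n^{\mathrm{Az}}$ into $G_{n-1}^{\mathrm{Az}}$ and multiplies $Z_n(F^{(n)})$ by the global factor $\Delta_{\mathrm{all}}:=\prod_{0\le i',j'\le n-1}\Delta(i',j')$. By Proposition~\ref{prop:squaremoveall}, the face weights of the resulting size-$(n-1)$ diamond are exactly $F^{(n-1)}$ as given in~\eqref{prop:brec:eq1}--\eqref{prop:brec:eq2} once the labeling is aligned with the coordinates after the shift. Hence $Z_n(F^{(n)})=\Delta_{\mathrm{all}}\cdot Z_{n-1}(F^{(n-1)})$.

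Now apply the same square moves to the diamond with $(2i-1,0)$ and $(0,2j-1)$ deleted. Outside a small neighbourhood of these two vertices the dynamics is identical, so these moves contribute the same $\Delta_{\mathrm{all}}$ factor. Locally near each deleted boundary vertex the picture differs: the square move on each of the even faces adjacent to the deleted vertex, followed by edge contraction, leaves a residual local graph with exactly two distinct completions, labelled by $k\in\{0,1\}$ near $(2i-1,0)$ and $l\in\{0,1\}$ near $(0,2j-1)$. Each such choice identifies the missing vertex in the shuffled graph with a specific boundary vertex of $G_{n-1}^{\mathrm{Az}}$, namely $(2(i-k)-1,0)$ and $(0,2(j-l)-1)$ respectively. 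Enumerating these four completions, dividing by $Z_n(F^{(n)})$, and carefully tracking the residual edge weights and $\Delta$-factors produces exactly the sum over $k,l\in\{0,1\}$ with the prefactors written in the statement; a direct computation from the definitions of $r_1,r_2,r_4$, $\Delta(\cdot,\cdot)$ and the boundary face weights $F_{2i+2,0}^{(n)}, F_{0,j}^{(n)}$ identifies the coefficients.

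The indicator term appears only when $(i,j)=(1,1)$ because both deleted vertices $(1,0)$ and $(0,1)$ then lie on the single face centred at $(1,1)$, which has only two remaining vertices after the deletion. One additional completion is then possible, in which these two vertices are matched across the face using the NE edge of weight $r_1(0,0)$; this completion has no analogue in any size-$(n-1)$ deletion pattern, and a separate computation shows that its contribution to the ratio is exactly $r_1(0,0)/\Delta(0,0)$. The main obstacle is the local bookkeeping in the previous paragraph: identifying which local edges survive the edge contraction near each deleted boundary vertex and how they combine with the $\Delta$-factors of the adjacent square moves. This structurally generalises the computation in~\cite[Lemma 3.2]{CY14} from the simpler weighting treated there to arbitrary weights, by retaining the boundary edge weights $r_1,r_2,r_4$ throughout.
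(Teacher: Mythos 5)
Your proposal is correct and follows essentially the same route as the paper's proof: interpret $(W_n^{\mathrm{Az}})^{-1}(i,j,F^{(n)})$ as a ratio of the partition function with the two boundary vertices removed (equivalently, with pendant edges attached) to the full partition function, apply the domino shuffle to both, cancel the common factor $\prod\Delta(p,q)$, track the four local completions indexed by $k,l\in\{0,1\}$ near the two boundary vertices, invoke Proposition~\ref{prop:squaremoveall} for the face-weight recurrences, and account separately for the extra completion on the face centred at $(1,1)$ that produces the indicator term.
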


\begin{proof}
    We apply the domino shuffle on all the even faces and remove the pendant edges. This gives an Aztec diamond of size $n-1$; see Fig.~\ref{fig:Aztec4shuffle}.  We apply a shift so that the bottom left most vertex of the Aztec diamond of size $n-1$ has coordinates $(1,0)$.  The change of weights is given by \cref{prop:squaremoveall} which can be shown to equal~\eqref{prop:brec:eq1} and~\eqref{prop:brec:eq2} after the shift.  The change in the partition function is given by 
\begin{equation}\label{proppf:brec:pf}
Z_n(F^{(n)}) = \prod_{0\leq p,q\leq n-1} \Delta(p,q) Z_{n-1}(F^{(n-1)}),
\end{equation}
where the product is over all the even faces in the Aztec diamond of size $n$.   

	Next notice that $Z_n(i,j,F^{(n)})$ is equivalent to adding a pendant edge to $(2i-1,0)$ and another pendant edge to $(0,2j-1)$ for $1 \leq i,j\leq n$.  We apply the domino shuffle on the even faces of the graph, and we obtain 
\begin{equation}
\begin{split}
&Z_n(i,j,F^{(n)}) =\prod_{0\leq p,q\leq n-1} \Delta(p,q) \bigg( \sum_{k,l \in\{0,1\}}   \frac{r_1(0,j-1)^{1-l}r_2(0,j-1)^{l}}{\Delta(0,j-1)} \\ 
&\frac{r_1(i-1,0)^{1-k}r_4(i-1,0)^{k}}{\Delta(i-1,0)} Z_{n-1}(i-k,j-l,F^{(n-1)})\\
&+\frac{r_1(0,0)}{\Delta(1,1)}\mathbbm{I}[(i,j)=(1,1)]Z_{n-1}(F^{(n-1)})\bigg).
\end{split}
\end{equation}
    Fig.~\ref{fig:Aztec4shufflependant} shows an example of adding pendant edges and applying the domino shuffle. 
    \begin{center}
\begin{figure}
    \includegraphics[height=5cm]{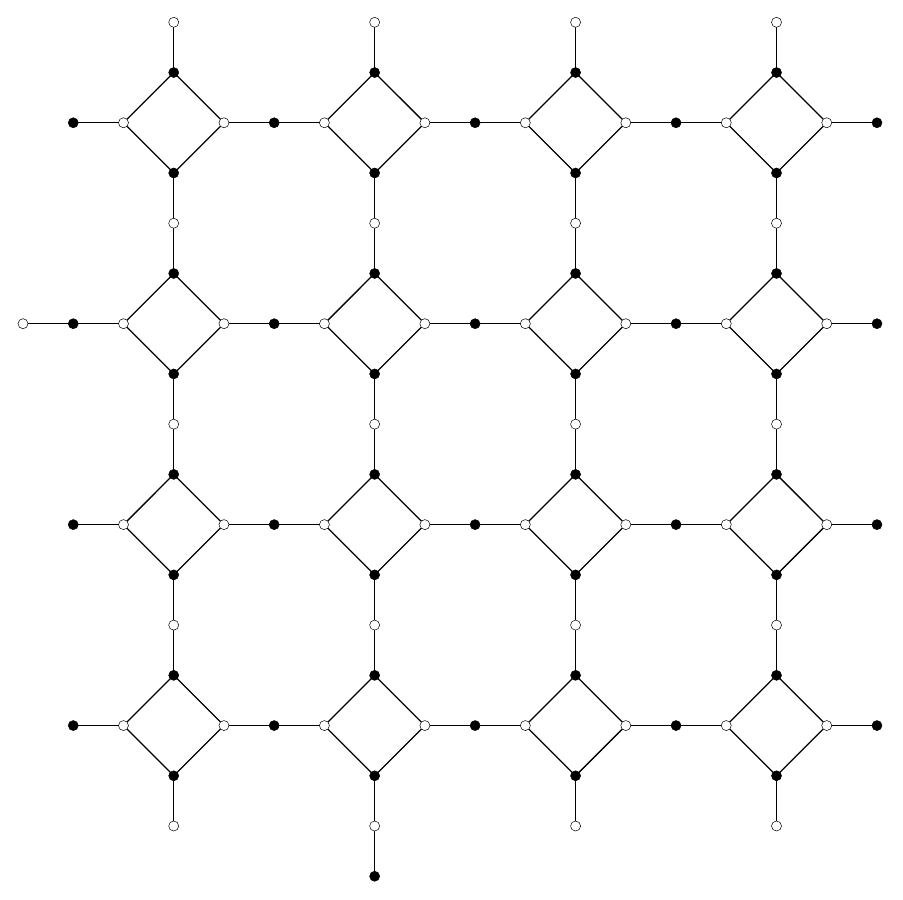}\,\,\,
  \includegraphics[height=5cm]{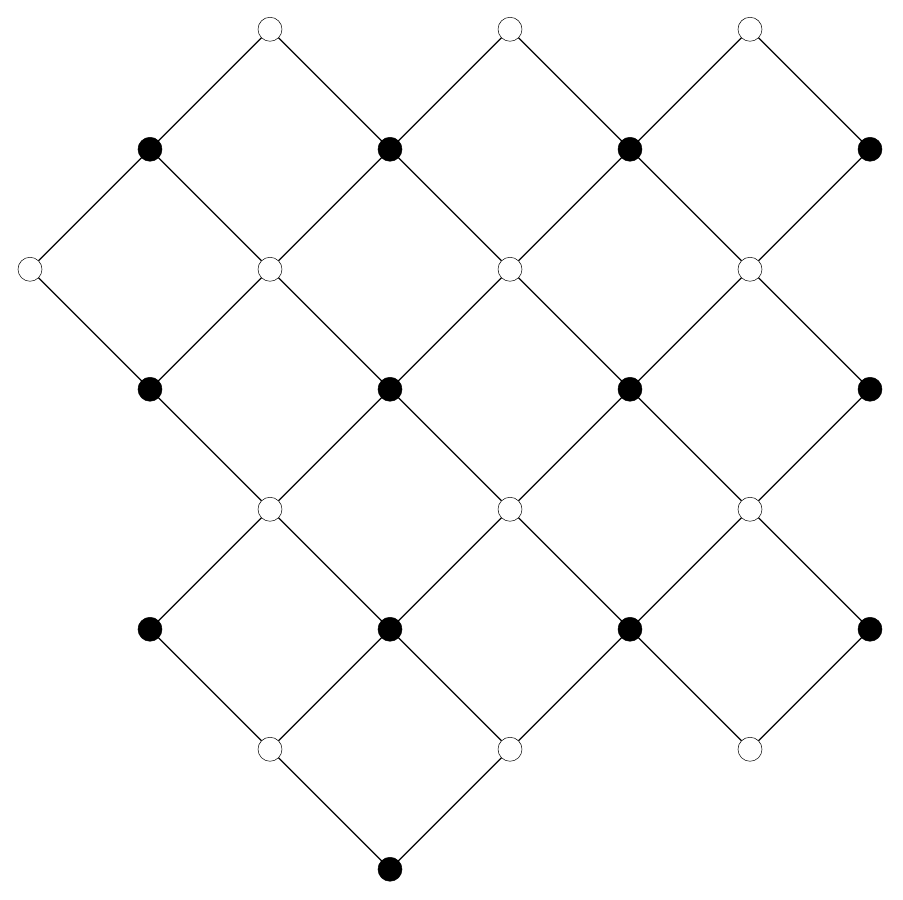}
    \caption{ The left figure shows adding two pendant edges incident to $(0,5)$ and $(3,0)$ and then applying the domino shuffle.  The figure on the right shows the resulting graph after contracting two-valent vertices and removing pendant edges. 
  }
    \label{fig:Aztec4shufflependant}
\end{figure}
\end{center}

We divide the above equation by~\eqref{proppf:brec:pf} to obtain
\begin{equation}\label{proppf:brec:pf2}
\begin{split}
&\frac{Z_n(i,j,F^{(n)})}{Z_n(F^{(n)})} =  \bigg( \sum_{k,l \in\{0,1\}}   \frac{r_1(0,j-1)^{l}r_2(0,j-1)^{1-l}}{\Delta(0,j-1)} \\ 
&\frac{r_1(i-1,0)^{k}r_4(i-1,0)^{1-k}}{\Delta(i-1,0)} \frac{Z_{n-1}(i-k,j-l,F^{(n-1)})}{Z_{n-1}(F^{(n-1)})}\\
&+\frac{r_1(0,0)}{\Delta(0,0)}\mathbbm{I}[(i,j)=(1,1)]\bigg).
\end{split}
\end{equation}
Using~\eqref{proppf:brec:pf2}, the above equation and~\eqref{eq:WntoKnAz} we arrive at
\begin{equation}\label{proppf:brec:pf3}
\begin{split}
    &(W_n^{\mathrm{Az}})^{-1} (i,j,F^{(n)}) =  \bigg( \sum_{k,l \in\{0,1\}} \\&  \frac{r_1(0,j-1)^{l}r_2(0,j-1)^{1-l}}{\Delta(0,j-1)}  
\frac{r_1(i-1,0)^{k}r_4(i-1,0)^{1-k}}{\Delta(i-1,0)} \frac{Z_{n-1}(i-k,j-l,F^{(n-1)})}{Z_{n-1}(F^{(n-1)})}\\
&+\frac{r_1(0,0)}{\Delta(1,1)}\mathbbm{I}[(i,j)=(1,1)]\bigg).
\end{split}
\end{equation}
Rearranging the above equation and using~\eqref{eq:Aztecfaceeven} gives the result. 

\end{proof}

\subsection{Remaining entries}

We will not give a computation to obtain the rest of the entries of $(K_n^{\mathrm{Az}})^{-1}$ for the Aztec diamond but will give two tractable approaches. The first is to notice that $(K_n^{\mathrm{Az}})^{-1} \cdot K_n^{\mathrm{Az}}=  K_n^{\mathrm{Az}} \cdot( K_n^{\mathrm{Az}})^{-1} =\mathbb{I}$ entry-wise is a recurrence relation.  Applying iteratively, we can view these equations as moving the white and black vertices to the boundary, that is we can express $(K_n^{\mathrm{Az}})^{-1}(x,y)$ as a linear combination of $(W_n^{\mathrm{Az}})^{-1} (i,j,F^{(n)})$ for $1 \leq i,j\leq n$.  Another approach is to use Theorem~\ref{thm:AztecLGV} and directly invert $D_n$ in the statement of this theorem.  Due to the choice of labelling, $D_n$ is a triangular matrix which means that its inverse is theoretically computable.  {Note that both of these approaches  extend to computing entries $(K_{n,p}^{\mathrm{Tow}})^{-1}$ (and thus entries $(K_{n}^{\mathrm{Az}})^{-1}$) from $(\tilde{W}_{n,p}^{\mathrm{Tow}})^{-1}$. }

\appendix

\section{LGV Theorem} \label{sec:app:lgv}
 
Let $G=(V,E)$ be a  directed acyclic graph and $w:E \mapsto \mathbb R$ a weight function. For any path $\pi$ in the graph $G$ we define the weight $w(\pi)$ as 
$$
    w(\pi)= \prod_{e \in \pi} w(e).
$$
For a vector $\vec \pi=(\pi_1,\ldots,\pi_N)$ we define its weight
$$
    w(\vec \pi)= \prod_{j=1}^n w(\pi_j).
$$ 

For any $a, b \in V$, let $\Pi(a,b)$ be the set of all paths from $a$ to $b$.  We then define a matrix $M: V \times V \to \mathbb R$  by setting 
$$
 M_{a,b}=\sum_{\pi \in \Pi(a,b)} w(\pi).
$$
For $N\in \mathbb N$, $\vec a=(a_1,\ldots,a_N) \in V^N$ and $\vec b=(b_1,\ldots,b_N)\in V^N$, let $\Pi(\vec a,\vec b)$ be the set of tuple of paths $(\pi_1,\ldots,\pi_n)$ that $\pi_j$ connects $a_j$ to $b_j$. Let $\Pi_{n.i}(\vec a, \vec b)$ be the set of all  such tuples such that no two paths have a vertex in common. For $\sigma \in S_n$ we define $\vec b_\sigma=(b_{\sigma(1)},\ldots,b_{\sigma(n)})$.
\begin{thm}\cite{GesVie85,Lin73}\label{thm:lgv}
    Let $G=(V,E)$ be a weighted directed acyclic graph, $N\in \mathbb N$, $\vec a=(a_1,\ldots,a_N) \in V^N$ and $\vec b=(b_1,\ldots,b_N)\in V^N$. Then 
    $$
        \det \left(M_{a_i,b_j}\right)_{i,j=1}^N= \sum_{\sigma \in S_n} \mathrm{sign} (\sigma) \sum_{\pi \in \Pi_{n.i.}(\vec a,\vec b_{\sigma})}  w(\pi).
    $$
    In particular, if $\Pi_{n.i}(\vec a, \vec b_\sigma)\neq \emptyset$ if and only if $\sigma=id$, then
    $$
    \det \left(M_{a_i,b_j}\right)_{i,j=1}^N= \sum_{\pi \in \Pi_{n.i}(\vec a, \vec b)} w(\pi).
    $$
\end{thm}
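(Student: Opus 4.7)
The plan is to prove the classical Lindström--Gessel--Viennot theorem via a sign-reversing involution on the set of path tuples that are \emph{not} non-intersecting. First I would expand the determinant:
\begin{equation*}
\det\bigl(M_{a_i,b_j}\bigr)_{i,j=1}^N = \sum_{\sigma \in S_N} \mathrm{sign}(\sigma) \prod_{i=1}^N M_{a_i,b_{\sigma(i)}} = \sum_{\sigma \in S_N} \mathrm{sign}(\sigma) \sum_{\vec\pi \in \Pi(\vec a,\vec b_\sigma)} w(\vec\pi),
\end{equation*}
where I used the definition of $M_{a,b}$ as the weighted sum over all paths from $a$ to $b$ and the multiplicativity of the weight function. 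The right-hand side is a signed sum over all tuples of paths $\vec\pi = (\pi_1,\ldots,\pi_N)$ with $\pi_i$ going from $a_i$ to $b_{\sigma(i)}$, for some permutation $\sigma$.

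Next I would partition these tuples into the non-intersecting ones and the intersecting ones. The non-intersecting contribution is exactly the right-hand side of the theorem, so the task reduces to showing that the intersecting contribution vanishes. For this I would construct a sign-reversing involution $\Phi$ on the set of intersecting tuples (taken across all $\sigma$). The standard recipe: fix a total order on $V$. Given an intersecting tuple $\vec\pi$ with underlying permutation $\sigma$, let $i_0$ be the smallest index such that $\pi_{i_0}$ shares a vertex with some other path, let $v$ be the first such shared vertex along $\pi_{i_0}$, and let $j_0 > i_0$ be the smallest index of a path $\pi_{j_0}$ passing through $v$. Define $\Phi(\vec\pi)$ by swapping the tails of $\pi_{i_0}$ and $\pi_{j_0}$ at $v$; this produces a tuple with the same weight but whose underlying permutation is $\sigma \circ (i_0\, j_0)$, so its sign is reversed.

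The main thing to verify is that $\Phi$ is a well-defined involution: one must check that $\Phi(\vec\pi)$ is again intersecting, that applying the recipe to $\Phi(\vec\pi)$ recovers the same critical data $(i_0, v, j_0)$, and that $\Phi \circ \Phi = \mathrm{id}$. The first point is immediate because after swapping tails the paths still share $v$. The second requires checking that the swap does not disturb any earlier-indexed paths or introduce an earlier shared vertex on $\pi_{i_0}$; this follows because the swap only affects the portions of $\pi_{i_0}$ and $\pi_{j_0}$ strictly after $v$, and the first shared vertex along the new $\pi_{i_0}$ is still $v$ (any earlier conflict would have existed originally). Since $\Phi$ pairs up the intersecting tuples into weight-preserving, sign-reversing pairs, their signed contribution is $0$.

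The final clause of the theorem is an immediate specialization: if $\Pi_{n.i.}(\vec a,\vec b_\sigma)$ is empty for all $\sigma \neq \mathrm{id}$, then only the $\sigma = \mathrm{id}$ term in the right-hand side survives, giving $\det(M_{a_i,b_j}) = \sum_{\vec\pi \in \Pi_{n.i.}(\vec a,\vec b)} w(\vec\pi)$. The main obstacle is purely bookkeeping: carefully ordering vertices and indices so that the involution is unambiguous, and checking that the swap operation preserves the critical data under iteration. The acyclicity of $G$ is used implicitly to ensure that paths are finite and the ``first shared vertex'' is well-defined.
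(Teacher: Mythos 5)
The paper does not actually prove \cref{thm:lgv}: it is stated in Appendix~\ref{sec:app:lgv} as a citation to Lindstr\"om and Gessel--Viennot, so there is no in-paper argument to compare against. Your proof is the standard (and correct) one: expand the determinant, observe that the non-intersecting tuples give exactly the claimed sum, and cancel the intersecting tuples via the tail-swapping involution with the canonical choice of critical data (minimal index $i_0$, first shared vertex $v$ along $\pi_{i_0}$, minimal partner index $j_0$), whose well-definedness under iteration you verify correctly. One small point worth making explicit: acyclicity is needed not only for finiteness of paths but also to guarantee that directed paths are simple, which is what rules out the tail of the old $\pi_{i_0}$ re-meeting its own initial segment after the swap and thereby creating a shared vertex earlier than $v$; with that noted, the argument is complete.
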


\section{Processes from products of block Toeplitz minors} \label{appendix:BD}

Let $p,n,N \in \mathbb N$ and set $x_j^0=x_j^N=-j$ for $j=1,\ldots,pN$. Then consider the point processes on $\{1,2,\ldots,n-1\} \times \mathbb Z$ defined by the probability of having a point configuration $\{(m,x_j^m)\}_{m,j=1}^{n-1,pN} \subset \{1,\ldots,n-1\} \times \mathbb Z$ is given by 
$$
\prod_{m=0}^{N-1} \det \left(M_m(x_j^m,x_k^{m+1})\right)_{j,k=1}^n,
$$ 
where $M_m$ is the block Toeplitz 
$$
\left[M_m(px+r,py+s)\right]_{r,s=0}^{p-1}=\frac{1}{2 \pi \mathrm{i}} \oint_{|z|=1} \phi_m(z) \frac{dz}{z^{{y-x}+1}},
$$
with symbol $\phi_m(z)$ such that $M_m$ is a totally non-negative matrix, and the entries of $\phi_m(z)$ are rational functions of $z$. 

The Eynard-Mehta Theorem  \cite{EM97} tells us that this probability measure on $ \{1,\ldots,n-1\} \times \mathbb Z$ is a determinantal point process with kernel given by 
\begin{multline}\label{eq:knbeforelimit}
    K_N(m_1,x_1,m_2,x_2)=- \mathbbm{I}[m_1 >m_2] M_{m_2+1}\cdots M_{m_1}(x_1,x_2)\\
    + \sum_{r,s=1}^{pN}  (M_{m_1}\cdots M_{2n-1})(x_1,-s) (W^{-1})_{s,r} (M_{0}\cdots M_{m_2-1})(-r,x_2).
    \end{multline}
    where 
    $$
     W_{r,s}=(M_0 \ldots M_{n-1})_{-r,-s}, \qquad r,s=1,\ldots,pN.
    $$
    Now $M_m=M(\phi_m)$ and \eqref{eq:factortoeplitz} give that  $$M_0 \ldots M_{n-1}=M(\phi_0\cdots \phi_{n-1})=M(\phi).$$
    Hence, $W$ is a submatrix of the doubly infinite Toeplitz matrix $M(\phi)$.  (Note that compared with Theorem ~\ref{thm:em} we have, without loss of generality, included a shift so that all parts end at the same height that they started at.)

    Below, we present a theorem that was first proved in \cite[Thm 3.1]{BD19}. It shows that when $N\to \infty$ there are two limiting processes for which the correlation functions can be computed in terms of double integrals with matrix-valued integrands.  One process focuses on the top of the paths, and the other on the bottom. The hope is that these double integral formulas can be used for asymptotic analysis, as $n \to \infty$. 
    
    The method of proof in \cite{BD19} was using a Riemann-Hilbert analysis for the matrix orthogonal polynomials introduced by \cite{DK21}. Here we will present a different proof that is more in line with the  more standard computation of the correlation kernel for Schur process (this is the special case of $p=1$ and $N\to \infty$), as for instance given in \cite{Joh17}.

\begin{thm}\label{thm:bd19} \cite[Thm 3.1]{BD19}
    Assume that 
$$
\phi=\phi_0\ldots \phi_{n-1},
$$
is analytic in an annulus $\{z \mid 1-2\varepsilon < |z|<1+2\varepsilon\}$ for some $\varepsilon>0$, and that there exists  factorizations
$$
\phi=\phi_+\phi_-=\tilde \phi_- \tilde \phi_+,
$$
such that 
\begin{itemize}
    \item $\phi_+^{\pm 1}(z),\tilde \phi_+^{\pm 1}(z)$ are analytic for $|z|>1-$.
    \item  $\phi_-^{\pm 1}(z),\tilde \phi_-^{\pm 1}(z)$ are analytic for $|z|<1+$.
    \item $\tilde \phi_-(z),\phi_-(z)\sim I_p$, as $z \to \infty$.
\end{itemize}
    Then, as $N \to \infty$, the point process for the top paths converges and the limit is the determinantal point process  defined by the kernel 
    \begin{multline} \label{eq:limittop}
   \lim_{N\to \infty}\left[ K_N(m_1,p y_1+j_1 ,m_2,p y_2+j_2)\right]_{j_1,j_2=0}^{p-1}\\=- \mathbbm{I}[m_1 >m_2] \oint_{|z|=1} \phi_{m_2}(z) \cdots \phi_{m_1-1}(z)\frac{dz}{z^{y_2-y_1+1}} \\
    +\frac{1}{(2 \pi \mathrm{i})^2} \oint_{|w|=1} \oint_{|z|=1+\varepsilon} \left(\prod_{k=m_1}^{n-1} \phi_{k}(w)\right) \tilde \phi_+^{-1}(w) \tilde \phi_-^{-1}(z) \left(\prod_{k=0}^{m_2-1} \phi_k(z)\right)  \frac{w^{y_1} dz dw }{z^{y_2+1}(z-w)},
    \end{multline}
    and, the point process for the bottom paths converges and the limit is the determinantal point process defined by the kernel 
    \begin{multline} \label{eq:limitbottom}
        \lim_{N\to \infty} \left[ K_N(m_1,p (-N+y_1)+j_1 ,m_2,p (-N+y_2)+j_2)\right]_{j_1,j_2=0}^{p-1}\\=- \mathbbm{I}[m_1 >m_2] \oint_{|z|=1} \phi_{m_2}(z) \cdots \phi_{m_1-1}(z)\frac{dz}{z^{y_2-y_1+1}} \\
         -\frac{1}{(2 \pi \mathrm{i})^2} \oint_{|w|=1+\varepsilon} \oint_{|z|=1} \left(\prod_{k=m_1}^{n-1} \phi_{k}(w)\right) \phi_-^{-1}(w)\phi_+^{-1}(z) \left(\prod_{k=0}^{m_2-1} \phi_k(z)\right)  \frac{w^{y_1} dz dw }{z^{y_2+1}(z-w)}.
         \end{multline}
\end{thm}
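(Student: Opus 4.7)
The strategy adapts the LU/UL-approximate-inverse machinery of Section~\ref{sec:refactorizaion} to the doubly periodic setting, in which the block Toeplitz structure makes everything explicit. Under $p$-periodicity, each $M_m=M(\phi_m)$ is a doubly-infinite block Toeplitz matrix, so by~\eqref{eq:factortoeplitz} the product satisfies $M_0\cdots M_{n-1}=M(\phi)$, and hence $W$ is precisely the principal $pN\times pN$ submatrix of $M(\phi)$ at indices $\{-pN,\ldots,-1\}$. The Wiener-Hopf factorizations $\phi=\phi_+\phi_-=\tilde\phi_-\tilde\phi_+$ translate via~\eqref{eq:factortoeplitz} into a UL- and an LU-decomposition of $M(\phi)$ respectively, and the analyticity hypotheses on $\phi_\pm^{\pm1},\tilde\phi_\pm^{\pm1}$ give the correct block-triangular structure for the factors and their inverses. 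The joint analyticity of $\phi$ in an annulus around the unit circle produces the exponentially decaying off-diagonal entries required by~\eqref{eq:bounds_giving_essentialbanded3}. Proposition~\ref{prop:asymptoticinverse} then yields, for $s,r$ near the top of the submatrix,
\begin{equation*}
(W^{-1})_{s,r}=\bigl(M(\tilde\phi_+^{-1})_{22}\,M(\tilde\phi_-^{-1})_{22}\bigr)(-s,-r)+\mathcal O(\rho^{pN}),
\end{equation*}
and analogously at the bottom with the factors $M(\phi_-^{-1}),M(\phi_+^{-1})$ from the UL-decomposition.

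For the top kernel, I would substitute this approximation into~\eqref{eq:knbeforelimit} and expand the product of middle blocks as $\sum_{\ell'=1}^{pN}M(\tilde\phi_+^{-1})(-s,-\ell')\,M(\tilde\phi_-^{-1})(-\ell',-r)$. The key simplification is that upper (respectively lower) block-triangularity of $M(\tilde\phi_+^{-1})$ (respectively $M(\tilde\phi_-^{-1})$) forces $M(\tilde\phi_+^{-1})(-s,-\ell')=0$ whenever $s<\ell'$ and $M(\tilde\phi_-^{-1})(-\ell',-r)=0$ whenever $r<\ell'$, so extending the outer sums $\sum_{s\geq 1}$ and $\sum_{r\geq 1}$ to $\sum_{s\in\mathbb Z}$ and $\sum_{r\in\mathbb Z}$ adds no new nonzero terms for $s\leq 0$ and $r\leq 0$, and for $s,r>pN$ only an exponentially small tail controlled by the annular decay of $M(\Phi)(x_1,-s)$ and $M(\Psi)(-r,x_2)$. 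The extended sums collapse by~\eqref{eq:factortoeplitz} to $M(\Phi\,\tilde\phi_+^{-1})(x_1,-\ell')$ and $M(\tilde\phi_-^{-1}\,\Psi)(-\ell',x_2)$ with $\Phi=\prod_{k\geq m_1}\phi_k$ and $\Psi=\prod_{k<m_2}\phi_k$, leaving the single sum
\begin{equation*}
\sum_{\ell'\geq 1}M(\Phi\,\tilde\phi_+^{-1})(x_1,-\ell')\,M(\tilde\phi_-^{-1}\,\Psi)(-\ell',x_2).
\end{equation*}
Representing each factor as a contour integral in spectral variables $w$ and $z$ respectively, the geometric series $\sum_{\ell'\geq 1}(w/z)^{\ell'}=w/(z-w)$ (convergent when $|w|<|z|$, which forces the contour configuration stated in the theorem) produces the Cauchy kernel $1/(z-w)$ of~\eqref{eq:limittop}. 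The bottom kernel follows from the symmetric computation based on the UL-decomposition; reversing the order of factorization swaps the roles of $w$ and $z$, giving the configuration $|w|>|z|$ and the overall sign change of~\eqref{eq:limitbottom}.

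The genuine technical work consists in bookkeeping two sources of error uniformly in $m_1,m_2,x_1,x_2$: the $\mathcal O(\rho^{pN})$ error from the approximate inverse of Proposition~\ref{prop:asymptoticinverse}, and the $\mathcal O(\rho^{pN})$ error from the $s,r>pN$ tails of the extended sums, both controlled by the same exponential decay coming from the annular analyticity of $\phi$. Once these are in hand, the identification of the leading single-contour-integral term with $M_{m_2+1}\cdots M_{m_1}(x_1,x_2)$ is routine, completing the proof.
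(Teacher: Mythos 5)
Your proposal is correct and follows essentially the same route as the paper's proof: identify the Wiener--Hopf factorizations with block LU/UL-decompositions of $M(\phi)$, invoke Proposition~\ref{prop:asymptoticinverse} (Widom's case) for the approximate inverse of the principal submatrix $W$, substitute into the Eynard--Mehta kernel, extend the sums over $r,s$ using block triangularity so they collapse via~\eqref{eq:factortoeplitz}, and sum the resulting geometric series under the contour integrals to produce the Cauchy kernel, with the bottom case handled symmetrically after re-indexing. The only cosmetic difference is that the paper carries the explicit two-term correction $\sum_{\ell=1}^{pN}M(\tilde\phi_+^{-1})M(\tilde\phi_-^{-1})-\sum_{\ell>pN}M(\phi_-^{-1})M(\phi_+^{-1})$ and then argues the second term is negligible for fixed top coordinates, whereas you state the same localization directly.
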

\begin{proof}
  By the condition this has the (block) LU- and UL-decompositions
        $$
            M(\phi)=M(\tilde \phi_-)M(\tilde \phi_+)= M(\phi_+)M(\phi_-).
        $$
        We are thus in the setting of Section \ref{sec:wid} with 
        $$
        L^{(1)}=M(\tilde \phi_-),\ U^{(1)}=M(\tilde \phi_+),\
        L^{(2)}=M(\phi_-), \ U^{(1)}=M(\phi_+),
        $$
        and 
        $$
        \Lambda^{(1)}=M(\tilde \phi_-^{-1}), \ \Upsilon^{(1)}=M(\tilde \phi_+^{-1}),
       \  \Lambda^{(2)}=M(\phi_-^{-1}), \ \Upsilon^{(1)}=M(\phi_+^{-1}).
        $$
        This is exactly the case for which Proposition \ref{prop:asymptoticinverse} was already proved in \cite{Wid74}.

        We will proceed with proving \eqref{eq:limittop} first. By \eqref{eq:approximateinverse2}, \eqref{eq:G22iinv1}, \eqref{eq:expontialdecay1} and \eqref{eq:expontialdecay2}, we see that there exists $0<\rho<1$ such that
        \begin{multline}
            (W^{-1})_{s,r}= \sum_{\ell=1}^{pN} \left(M(\tilde \phi_+^{-1})\right)_{-s,-\ell}\left(M(\tilde \phi_-^{-1})\right)_{-\ell,-r}\\- \sum_{\ell=pN+1}^{\infty} \left(M (\phi_-^{-1})\right)_{-s,-\ell}\left(M(\phi_+^{-1})\right)_{-\ell,-r}+\mathcal O(\rho^N),
        \end{multline}
        as $N \to \infty$.
    
     By inserting this into \eqref{eq:knbeforelimit} and using \eqref{eq:expontialdecay1} and \eqref{eq:expontialdecay2}, we see that for fixed $x_1,x_2$   the main contribution in the limit $N\to \infty$ comes from small values for $s$ and $r$, and
    \begin{equation}
\begin{split}
        & \lim_{N\to \infty} K_n(m_1,x_1,m_2,x_2)=- \mathbbm{I}[m_1 >m_2] M(\phi_{m_2}\cdots \phi_{m_1-1})_{x_1,x_2}\\
        &+ \sum_{\ell,r,s=1}^{\infty}  (M(\phi_{m_1} \cdots \phi_{n-1}))(x_1,-s)\left(M(\tilde \phi_+^{-1})\right)_{-s,-\ell}\left(M(\tilde \phi_-^{-1})\right)_{-\ell,-r} \\ &\times (M(\phi_0 \cdots \phi_{m_2-1})(-r,x_2).
\end{split}
        \end{equation} 
        Now note that the fact that $M(\tilde \phi_+^{-1})$ is (block) upper triangular, we have that $\left(M(\tilde \phi_+^{-1})\right)_{-s,-\ell}=0$ for $\ell>1$ and $s\leq 0$, and thus we can sum $s$ from $-\infty$ to $\infty$. Similarly, we let $r$ range from $-\infty$ to $\infty$.  This gives
        \begin{multline}\label{eq:almostmmm}
            \lim_{N\to \infty} K_n(m_1,x_1,m_2,x_2)=- \mathbbm{I}[m_1 >m_2] M(\phi_{m_2}\cdots \phi_{m_1-1})_{x_1,x_2}\\
            + \sum_{\ell=1}^{\infty}  \left(M(\phi_{m_1}\cdots \phi_{n-1}\tilde \phi_+^{-1})\right)_{x_1,-\ell}\left(M(\tilde \phi_-^{-1} \phi_{0}\cdots \phi_{m_2-1})\right)_{-\ell,x_2}.
            \end{multline}
        Next we turn this into block form. By setting $x_i=py_i+j_i$, and setting $-\ell=- p \lambda+v)$, we find
        \begin{multline}\label{eq:closetoalmost}
            \sum_{\ell=1}^{\infty}  \left(M(\phi_{m_1}\cdots \phi_{n-1}\tilde \phi_+^{-1})\right)_{x_1,-\ell}\left(M(\tilde \phi_-^{-1} \phi_{0}\cdots \phi_{m_2-1})\right)_{-\ell,x_2}
            \\=\sum_{\lambda=1}^{\infty} \sum_{v=0}^{p-1} \left(M(\phi_{m_1}\cdots \phi_{n-1}\tilde \phi_+^{-1})\right)_{py_1+j_1,-\lambda p+v}\left(M(\tilde \phi_-^{-1} \phi_{0}\cdots \phi_{m_2-1})\right)_{-\lambda p +v,py_2+j_2}
        \end{multline}
        Moreover,
            \begin{multline}
               \left[ \sum_{\lambda=1}^{\infty} \sum_{v=0}^{p-1} \left(M(\phi_{m_1}\cdots \phi_{n-1}\tilde \phi_+^{-1})\right)_{py_1+j_1,-\lambda p+v}\left(M(\tilde \phi_-^{-1} \phi_{0}\cdots \phi_{m_2-1})\right)_{-\lambda p +v,py_2+j_2}\right]_{j_1,j_2=0}^{p-1}\\
                =
               \sum_{\lambda=1}^\infty  \frac{1}{2 \pi \mathrm{i}}\oint_{|w|=1} \phi_{m_1}(w)\cdots \phi_{n-1}(w)\tilde \phi_+^{-1}(w) w^{x_1+\lambda} \frac{dw}{w}\\ \times \frac{1}{2 \pi \mathrm{i}} \oint_{|z|=1}\tilde \phi_-^{-1}(z) \phi_0(z)\cdots \phi_{m_2-1}(z)  \frac{dz}{z^{y_2+\lambda+1}}.
            \end{multline}
        By changing the sum and the integrals, and using 
        $$
        \sum_{\lambda=1}^\infty \left(\frac{w}{z}\right)^\lambda=\frac{w}{z-w},
        $$
        which converges for $|z|>|w|$, we find
        \begin{multline}
            \left[ \sum_{\lambda=1}^{\infty} \sum_{v=0}^p \left(M(\phi_{m_1}\cdots \phi_{n-1}\tilde \phi_+^{-1})\right)_{py_1+j_1,-\lambda p+v}\left(M(\tilde \phi_-^{-1} \phi_{0}\cdots \phi_{m_2-1})\right)_{-\lambda p +v,py_2+j_2}\right]_{j_1,j_2=0}^{p-1}\\
            \frac{1}{(2 \pi \mathrm{i})^2} \oint_{|w|=1} \oint_{|z|=1+\varepsilon} \left(\prod_{k=m_1}^{n-1} \phi_{k}(w)\right) \tilde \phi_+^{-1}(w) \tilde \phi_-^{-1}(z) \left(\prod_{k=0}^{m_2-1} \phi_k(z)\right)  \frac{w^{x_1} dz dw }{z^{x_2+1}(z-w)}.
        \end{multline}
        By combining this with \eqref{eq:almostmmm} and \eqref{eq:closetoalmost} we find \eqref{eq:limittop}.

        Next, we prove \eqref{eq:limitbottom}. The proof is very similar to the proof of \eqref{eq:limittop}. We start by changing the summation variable in \eqref{eq:knbeforelimit} and write
        \begin{multline}
            K_N(m_1,x_1,m_2,x_2)=- \mathbbm{I}[m_1 >m_2] M_{m_2+1}\cdots M_{m_1}(x_1,x_2)\\
            + \sum_{r,s=0}^{pN-1}  \left(M(\phi_{m_1}\ldots \phi_{n-1})\right)_{x_1,-pN+s} (W^{-1})_{-pN+s,-pN+r}  \left(M(\phi_{0}\ldots \phi_{m_2-1})\right)_{pN+r,x_2}.
            \end{multline}
            Since $x_i=p(-N+y_i)+j_i$ and by \eqref{eq:expontialdecay1} and \eqref{eq:expontialdecay2}, the main contribution in this sum comes from  finite values of $s,r$. Arguing as above for the case \eqref{eq:limittop}, we now find that there exists $0<\rho<1$ such that
            $$
            (W^{-1})_{-pN+s,-pN+r}\sum_{\ell=0}^{pN-1} \left(M( \phi_-^{-1})\right)_{-pN+s,-pN+\ell}\left(M(\phi_+^{-1})\right)_{-pN+\ell,-pN+r}+\mathcal O(\rho^N),
        $$
        and
            \begin{multline}\label{eq:almostmmmbottom}
              K_n(m_1,-pN+x_1,m_2,-pN+x_2)=- \mathbbm{I}[m_1 >m_2] M(\phi_{m_2}\cdots \phi_{m_1-1})_{-pN+x_1,-pN+x_2}\\
                 + \sum_{\ell=0}^{pN-1}  \left(M(\phi_{m_1}\cdots \phi_{n-1} \phi_-^{-1})\right)_{-pN+x_1,-pN+\ell}\left(M(\phi_+^{-1} \phi_{0}\cdots \phi_{m_2-1})\right)_{-pN+\ell,-pN+x_2}+\mathcal O(\rho^N),
            \end{multline}
            as $N\to \infty$.
            It remains to put  this expression in block form and take the limit $N\to \infty$. To this end, first note that  $M(\phi_{m_2}\cdots \phi_{m_1-1})_{-pN+x_1,-pN+x_2}=  M(\phi_{m_2}\cdots \phi_{m_1-1})_{x_1,x_2}$ does not depend on $N$. For the second term on the right-hand side of \eqref{eq:almostmmmbottom} we note that (after setting $\ell=p\lambda+v$ and $x_i=pN+y_i+j_i$)
            \begin{multline*}
                \left[\sum_{\ell=0}^{pN-1}  \left(M(\phi_{m_1}\cdots \phi_{n-1} \phi_-^{-1})\right)_{-pN+py_1+j_1,-pN+\ell}\right.\\ \times \left.\left(M(\phi_+^{-1} \phi_{0}\cdots \phi_{m_2-1})\right)_{-pN+\ell,-pN+py_2+j_2}\right]_{j_1,j_2=0}^{p-1}\\
                =\sum_{\lambda=0}^{N-1}  \frac{1}{2 \pi \mathrm{i}}\oint_{|w|=1} \phi_{m_1}(w)\cdots \phi_{n-1}(w) \phi_-^{-1}(w)  \frac{w^{y_1} dw}{w^{\lambda+1}}\\ \times \frac{1}{2 \pi \mathrm{i}} \oint_{|z|=1}\phi_+^{-1}(z) \phi_0(z)\cdots \phi_{m_2-1}(z)  \frac{z^{\lambda}dz}{z^{y_2+1}}.
            \end{multline*}
            Now take the sum under the integrals and use that $\sum_{\lambda=0}^\infty(z/w)^\lambda=w/(w-z)$. After that, a simple limit $N\to \infty$ proves \eqref{eq:limitbottom}.
\end{proof}

\bibliographystyle{alpha}
\bibliography{Biblio}

\end{document}